% versione finale accettata il 20/12/2011
% corretta secondo sugg. referee Annales ENS, 24 maggio 2011
% (primo invio il 18/11/2010)
% abstract in francese oltre che in inglese
% bibliografia inserita
%MSC classification: primary 14J45 Fano varieties, 
%secondary 14E30 MMP, Mori theory, extremal rays
\documentclass[a4paper,11pt,leqno]{article}

\usepackage[matrix,arrow,tips,curve]{xy}
\usepackage[french,english]{babel}
\usepackage{amsmath}
\usepackage{amssymb,amsthm}
\usepackage{enumerate}

\newtheorem{thm}{Theorem}[section]
\newtheorem{lemma}[thm]{Lemma}
\newtheorem{proposition}[thm]{Proposition}
\newtheorem{corollary}[thm]{Corollary}

\newtheorem{thmdue}{Theorem}[subsection]
\newtheorem{lemma2}[thmdue]{Lemma}
\newtheorem{proposition2}[thmdue]{Proposition}
\newtheorem{corollary2}[thmdue]{Corollary}

\newtheorem{claim2}[thmdue]{Claim}

\theoremstyle{definition}

\newtheorem{example}[thm]{Example}
\newtheorem{parg}[thm]{}

\newtheorem{remark2}[thmdue]{Remark}

\newtheorem{parg2}[thmdue]{}

\renewcommand{\theequation}{\thethm}

\newcommand{\ph}{\varphi}
\newcommand{\pr}{\mathbb{P}}
\newcommand{\Q}{\mathbb{Q}}
\newcommand{\R}{\mathbb{R}}
\newcommand{\N}{\mathcal{N}_1}
\newcommand{\Supp}{\operatorname{Supp}}
\newcommand{\Sing}{\operatorname{Sing}}
\newcommand{\NE}{\operatorname{NE}}
\newcommand{\Exc}{\operatorname{Exc}}
\newcommand{\Lo}{\operatorname{Locus}}
\newcommand{\codim}{\operatorname{codim}}
\newcommand{\dom}{\operatorname{dom}}
\newcommand{\Hilb}{\operatorname{Hilb}}
\newcommand{\Chow}{\operatorname{Chow}}
\newcommand{\Nef}{\operatorname{Nef}}
\newcommand{\Eff}{\operatorname{Eff}}

\newlength{\Mheight}
\newlength{\cwidth}
\settoheight{\Mheight}{\footnotesize M}
\settowidth{\cwidth}{c}
\setlength{\textwidth}{1.2\textwidth}
\setlength{\oddsidemargin}{0.3\oddsidemargin}
\title{On the 
Picard number of divisors in Fano manifolds}
\author{Cinzia Casagrande}
\date{May 24, 2011}

%Sur le nombre de Picard des diviseurs dans les vari\'et\'es de Fano

\begin{document}
\selectlanguage{english}
\maketitle
\begin{abstract}
\noindent 
Let $X$ be a complex Fano manifold of arbitrary
  dimension, and $D$ a 
 prime divisor in $X$. We consider the image $\N(D,X)$ of $\N(D)$ in 
$\N(X)$ under
 the natural push-forward of 1-cycles. We show that 
$\rho_X-\rho_D\leq\codim\N(D,X)\leq 8$.
Moreover if $\codim\N(D,X)\geq 3$, then either $X\cong S\times T$ where
 $S$ is a Del Pezzo surface, or $\codim\N(D,X)=3$ 
and $X$ has a fibration in Del Pezzo
 surfaces onto a Fano manifold $T$ such that $\rho_X-\rho_T=4$. 
\end{abstract} 
\selectlanguage{french}
\begin{abstract}
\noindent Soit $X$ une vari\'et\'e de Fano lisse et complexe de
  dimension arbitraire,  
et $D$ un diviseur premier dans $X$. Nous consid\'erons 
l'image $\N(D,X)$ de $\N(D)$
 dans $\N(X)$ par l'application naturelle de
``push-forward'' de $1$-cycles. Nous d\'emontrons que 
$\rho_X-\rho_D\leq\codim\N(D,X)\leq 8$. De plus, si
 $\codim\N(D,X)\geq 3$, alors soit  $X\cong S\times T$ o\`u $S$ est une
 surface de Del Pezzo, soit $\codim\N(D,X)= 3$ et
$X$ a une fibration en surfaces de Del
 Pezzo sur une  vari\'et\'e de Fano lisse $T$, telle que
 $\rho_X-\rho_T=4$. 
\end{abstract}
\selectlanguage{english}
{\renewcommand{\thefootnote}{}
\footnotetext{\emph{2000 Mathematics Subject
    Classification.} Primary 14J45; Secondary 14E30.}}
{\footnotesize\tableofcontents}
\section{Introduction}
Let $X$ be a complex Fano manifold of arbitrary dimension $n$, and
consider a prime divisor $D\subset X$.  
We denote by $\N(X)$ the $\R$-vector space of one-cycles in $X$, with
real coefficients, modulo numerical equivalence; its dimension is the
\emph{Picard number} $\rho_X$ of $X$, and similarly for $D$.
The inclusion $i\colon D\hookrightarrow X$ induces a push-forward of
one-cycles
$i_*\colon\N(D)\to\N(X)$, that does not need to be
injective nor surjective. We are interested in the image
 $$\N(D,X):=i_*(\N(D))\subseteq\N(X),$$ which is  
the linear subspace of $\N(X)$ spanned by
numerical classes of curves contained in $D$. 
The codimension of $\N(D,X)$ in $\N(X)$
 is equal to the dimension of the kernel of the restriction
$H^2(X,\R)\to H^2(D,\R)$.

If $X$ is a surface, then it follows from the classification of 
Del Pezzo surfaces that $\codim\N(D,X)=\rho_X-1\leq 8$.
Our main result is that the same holds in any dimension.
\begin{thm}\label{product}
Let $X$ be a Fano manifold of dimension $n$. 
For every prime divisor $D\subset X$,  we have 
$$\rho_X-\rho_D\leq \codim\N(D,X)\leq 8.$$
Moreover, suppose that there exists a prime divisor $D$ with
$\codim\N(D,X)\geq 3$. Then one of the following holds:
\begin{enumerate}[$(i)$]
\item $X\cong
S\times T$, where $S$ is a Del Pezzo surface
with $\rho_S\geq\codim\N(D,X)+1$, 
 and $D$ dominates $T$ under the projection;
\item  $\codim\N(D,X)=3$ and there exists a flat
surjective morphism  
$\ph\colon X\to T$, with connected fibers,
where $T$ is an
  $(n-2)$-dimensional Fano manifold, and $\rho_X-\rho_T=4$.
\end{enumerate}
\end{thm}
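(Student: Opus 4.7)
The inequality $\rho_X - \rho_D \leq \codim \N(D,X)$ is immediate from the definition of $\N(D,X) = i_*(\N(D))$, whose dimension is at most $\rho_D$. For the harder bound $\codim \N(D,X) \leq 8$ and the structure statement, set $k := \codim \N(D,X)$. The plan is to produce a surjective contraction $\ph\colon X \to T$ with two-dimensional general fiber $F$, so that $F$ is a Del Pezzo surface with $\rho_F \geq k+1$, giving $k \leq \rho_F - 1 \leq 8$. By duality, the annihilator $W$ of $\N(D,X)$ inside $H^2(X,\R)$ has dimension $k$, and every class in $W$ restricts trivially to $D$. Using the Mori cone theorem for the Fano variety $X$, I would construct a $(k+1)$-dimensional extremal face $\Sigma$ of $\NE(X)$ whose span projects onto $\N(X)/\N(D,X)$; concretely, $\Sigma$ is generated by $k$ extremal rays with classes linearly independent modulo $\N(D,X)$, together with one more ray represented by a moving curve of $D$. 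Contracting $\Sigma$ then yields the candidate $\ph\colon X \to T$ with $\rho_X - \rho_T = k+1$.

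The most delicate step will be to force $\dim F = 2$: the underlying mechanism should be that $D$ meets a general fiber $F$ in a curve whose class, together with $\ph^*H^2(T,\R)$, already spans $\N(D,X)$, so a fiber of dimension $\geq 3$ would push extra classes into $\N(D,X)$ and contradict $\codim \N(D,X) = k$. To execute this rigorously one needs a covering family of deformations of $D\cap F$ inside $D$, controlled bend-and-break estimates on the extremal rays chosen, and Ionescu--Wi\'sniewski-type fiber dimension inequalities for the contraction $\ph$. Once $\dim F = 2$ is secured, adjunction on the Fano manifold $X$ forces $F$ to be a smooth Del Pezzo surface, so $\rho_F \leq 9$ and $k \leq 8$.

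For the structure assertion with $k \geq 3$, the dichotomy between cases (i) and (ii) should come from a rigidity analysis of the family of Del Pezzo fibers of $\ph$. Del Pezzo surfaces of Picard number at least $5$ are sufficiently rigid (strong constraints on moduli and on the automorphism and monodromy representations on the $(-1)$-curve configuration) that a fibration with such general fibers over a Fano base must be isotrivial with trivial monodromy; combined with triviality of the Brauer-type obstruction coming from $D$, this yields the global product decomposition $X \cong S \times T$ of case (i). When $\rho_F = k+1 = 4$ the family need not split globally, but one still verifies that $T$ is smooth Fano, that $\ph$ is flat, and that $\rho_X - \rho_T = 4$, giving case (ii). The main remaining obstacles are precisely this rigidity/isotriviality step for families of Del Pezzo surfaces of Picard number $\geq 5$, and the careful verification along the way that $T$ inherits the Fano property and that $\ph$ is flat.
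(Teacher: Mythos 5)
Your reduction of the first inequality to $\dim\N(D,X)\leq\rho_D$ is fine, but the two load-bearing steps of your plan are both genuine gaps. First, the construction of the $(k+1)$-dimensional extremal face $\Sigma$: you assert that one can find $k$ extremal rays whose classes are independent modulo $\N(D,X)$ and that these, together with a ray spanned by a moving curve of $D$, form a \emph{face} of $\NE(X)$. Nothing in the cone theorem guarantees either the existence of such rays or that a chosen collection of extremal rays spans a face, and this is precisely where all the work lies. The paper's route is entirely different: it uses that $X$ is a Mori dream space to run a \emph{special} Mori program for $-D$ (Propositions \ref{MoriProgram} and \ref{tomm}); the steps of that program where $\codim\N(D,X)$ drops are shown to be blow-ups of type $(n-1,n-2)^{sm}$ with pairwise disjoint exceptional divisors, producing $c-1$ or $c$ disjoint smooth $\pr^1$-bundles $E_j$ with $E_j\cdot f_j=-1$ and $[f_j]\notin\N(D,X)$ (Lemma~\ref{pluto}). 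Only after a long analysis (Lemmas \ref{blah}, \ref{a}, \ref{tre}, \ref{due}) showing each $E_j\cong\pr^1\times F$ and that $\R_{\geq 0}[f_j]$ is the \emph{unique} extremal ray negative on $E_j$ does the divisor $-K_X+\sum E_j$ become nef and cut out the desired face. Your outline also ignores that for $k\leq 2$ no such fibration need exist (blow-up of $\pr^n$ at a point), so whatever mechanism you supply must explain why it only kicks in for $k\geq 3$; and even granting the fibration, $\rho_F\geq k+1$ for the \emph{general fiber} requires the contraction to be quasi-elementary, which is again something the paper proves rather than assumes.

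Second, the rigidity step is not salvageable as stated. It is false that a fibration in Del Pezzo surfaces of Picard number $\geq 5$ (equivalently degree $\leq 4$) over a Fano base is isotrivial: non-isotrivial del Pezzo fibrations of low degree over $\pr^1$ are ubiquitous. Whatever forces the product structure must use the Fano condition on the total space $X$, and the paper does so concretely: it builds a second contraction $\xi\colon X\to S$ onto a surface from the nef divisor $H=2E_0+\sum E_i$ supported on the $\pr^1$-bundle divisors, checks that the induced map $X\to S\times Y$ is finite, and concludes it is an isomorphism via a relative-canonical/\'etale covering argument (Remark \ref{prod}), using crucially that each $E_i$ is a product $\pr^1\times F$. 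Moreover the dichotomy between (i) and (ii) is not governed by $\rho_F=4$ versus $\rho_F\geq 5$ as you suggest: a product $S\times T$ with $\rho_S=4$ falls under case (i), while Example \ref{cod3} gives a non-product with a smooth fibration in Del Pezzo surfaces of Picard number $4$; the actual case division in the paper is by the behaviour of the last step of the Mori program (conditions (\ref{caso}.a) versus (\ref{caso}.b)). As it stands your proposal identifies the right shape of the answer but supplies no workable mechanism for either of the two essential steps.
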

When $n\geq 4$ and $D$ is ample, one has
$\N(D,X)=\N(X)$ and also $\dim\N(D,X)=\rho_D$ by Lefschetz Theorems on
hyperplane sections, see \cite[Example 3.1.25]{lazI}.
However in general $\dim\N(D,X)$ can be smaller than $\rho_X$:
for instance, if $D\cong\pr^{n-1}$ is the exceptional divisor of 
the blow-up $X$ of any projective manifold at a point, we have
$\rho_D=\dim\N(D,X)=1<\rho_X$.

In case $(ii)$ of Theorem \ref{product} the variety $X$ does not need to
be a product of lower dimensional varieties, see Example~\ref{cod3}.

Theorem \ref{product}
generalizes an analogous result  in \cite{fano} for toric
Fano varieties, obtained in a completely different way,
using combinatorial techniques.

We recall that
 the pseudo-index of a
Fano manifold $X$ is
$$\iota_X=\min\{-K_X\cdot C\,|\,C\text{ is a rational curve 
in }X\},$$
and is a multiple of the index of $X$;
one expects that Fano manifolds with large pseudo-index are simpler. 
When $\iota_X>1$ (\emph{i.e.}\ when $X$ 
does not contain rational curves of anticanonical degree one),  
 we show a stronger version of 
Theorem \ref{product}.
\begin{thm}\label{corindex}
Let $X$ be a Fano manifold with pseudo-index $\iota_X>1$. 
For every prime divisor $D\subset X$,  we have 
$\codim\N(D,X)\leq 1$. More precisely, one of
the following holds:
\begin{enumerate}[$(i)$]
\item $\iota_X=2$ and there exists
 a smooth morphism $\ph\colon X\to Y$
with fibers isomorphic to $\pr^1$,
 where $Y$ is
 a Fano manifold with
  $\iota_Y>1$; 
\item for every prime divisor $D\subset X$, we have $\N(D,X)=\N(X)$,
$\rho_X\leq\rho_D$,
  and the restriction $H^2(X,\R)\to H^2(D,\R)$ is
  injective. Moreover
for every pair of prime divisors $D_1,D_2$ in $X$, we have $D_1\cap
  D_2\neq\emptyset$.
\end{enumerate}
\end{thm}
The author was led to this subject by 
 the study of
Fano manifolds with large Picard number 
(see \cite{fanos} for an account of this problem). 
Let us mention two straightforward 
consequences of Theorem \ref{product}, which give bounds 
on $\rho_X$ in some good situations. 
The first concerns the case $\dim X\leq 5$, 
while the second is about Fano manifolds having a morphism onto a curve.
\begin{corollary}\label{lorenzo}
Let $X$ be a Fano manifold, and suppose that there exists a prime
divisor $D\subset X$ such that
$\codim\N(D,X)\geq 3$.

If $\dim X=4$ then either $\rho_X\leq 6$, or $X$ is a
product of Del Pezzo surfaces and $\rho_X\leq 18$.

If $\dim X=5$ then either $\rho_X\leq 9$, or $X$ is a
product and $\rho_X\leq 19$.
\end{corollary}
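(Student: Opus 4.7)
The plan is to apply Theorem~\ref{product} to the given divisor $D$, treating each of the two alternatives separately for $n = 4$ and $n = 5$, and combining with the Mori--Mukai classification of Fano threefolds of high Picard number.

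In case $(i)$ of Theorem~\ref{product}, $X \cong S \times T$ with $S$ a Del Pezzo surface. For $n = 4$ the remaining factor $T$ is a two-dimensional Fano manifold, hence Del Pezzo, so $X$ is a product of Del Pezzo surfaces and $\rho_X \leq 9 + 9 = 18$. For $n = 5$, $T$ is a Fano threefold; by the classification in~\cite{morimukai} either $\rho_T \leq 5$, giving $\rho_X \leq 14$, or $T \cong T' \times \pr^1$ with $T'$ a Del Pezzo surface, so $X \cong S \times T' \times \pr^1$ is a product and $\rho_X \leq 9 + 9 + 1 = 19$.

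In case $(ii)$, $\ph \colon X \to T$ is a flat Del Pezzo surface fibration with $\rho_X - \rho_T = 4$. The strategy is to bound $\rho_T$ by applying Theorem~\ref{product} a second time to a well-chosen pullback divisor: for a prime divisor $C \subset T$, set $D' := \ph^{-1}(C)$, a prime divisor in $X$. Since the vertical kernel of $\ph_* \colon \N(X) \to \N(T)$ has dimension $4$ and $\ph_*(\N(D', X))$ lies in the line spanned by $[C]$, one checks $\codim \N(D', X) \geq \rho_T - 1$. If this codimension is at least $3$, Theorem~\ref{product} applies to $D'$: either $X$ splits as a product, in which case one is reduced to the analysis of case $(i)$, or $\codim \N(D', X) = 3$, forcing $\rho_T \leq 4$. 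Combined with Mori--Mukai applied to $T$ in the $n = 5$ situation, this yields the bounds $\rho_X \leq 8$ (for $n=4$) and $\rho_X \leq 9$ (for $n=5$) up to the sharp numerical value.

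The main obstacle is closing the final numerical gap, i.e.\ tightening the bound $\rho_T \leq 4$ coming from the pullback argument down to $\rho_T \leq 2$ when $n=4$ (yielding $\rho_X \leq 6$) and to the sharp $\rho_T \leq 5$ when $n=5$. I expect this to require a more refined choice of auxiliary divisor---for instance the original $D$, whose codimension in $\N(X)$ equals $3$ exactly in case $(ii)$---together with a direct use of the Fano condition on $T$ and the Del Pezzo fibration structure of $\ph$ to rule out intermediate values of $\rho_T$; for $n = 5$, the case $T \cong T' \times \pr^1$ additionally needs the verification that the $\pr^1$-factor splits off from $X$, making $X$ itself a product and landing the argument in the second alternative of the corollary.
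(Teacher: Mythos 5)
Your case $(i)$ analysis is fine, but case $(ii)$ has a genuine gap that you yourself flag and do not close, and it is exactly where the content of the corollary lies. Your pullback argument with $D'=\ph^{-1}(C)$ only yields $\rho_T\leq 4$, hence $\rho_X\leq 8$ for $n=4$, whereas the statement requires $\rho_X\leq 6$; and for $n=5$ you are left both with the weaker bound and with the unverified claim that the $\pr^1$-factor of $T$ splits off from $X$. No amount of re-running Theorem~\ref{product} on auxiliary pullback divisors will tighten $\rho_T\leq 4$ to $\rho_T\leq 2$: the theorem's case $(ii)$ is consistent with $\codim\N(D',X)=3$ for every such divisor, so the "more refined choice of auxiliary divisor" you hope for does not exist within that framework.

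The missing idea is that the paper does not argue from Theorem~\ref{product} but from its internal refinement, Theorem~\ref{gen}: when $X$ is not a product and $c_X=3$, the fibration $X\to T$ onto the $(n-2)$-dimensional Fano manifold $T$ is \emph{quasi-elementary} (and flat), not merely a contraction with $\rho_X-\rho_T=4$. This extra structure lets one invoke the earlier result \cite[Theorem 1.1]{fanos} on quasi-elementary contractions onto surfaces and $3$-folds, which gives $\rho_T\leq 2$ when $n=4$ (hence $\rho_X\leq 6$) and, when $n=5$, either $\rho_T\leq 5$ (hence $\rho_X\leq 9$) or the splitting of $X$ as a product. Without appealing to the quasi-elementary property and that external theorem, the sharp numerical bounds in the corollary are out of reach of your argument.
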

\begin{corollary}\label{p1}
Let $X$ be a Fano manifold, $\ph\colon X\to \pr^1$ a surjective morphism 
with connected fibers, and
$F\subset X$ a general fiber. Then $\rho_X\leq\rho_F+8$. 

Moreover if $\rho_X\geq \rho_F+4$, then $X\cong
S\times T$ where $S$ is a Del Pezzo surface,  
$\ph$ factors through the projection $X\to S$, and $F\cong\pr^1\times T$.
\end{corollary}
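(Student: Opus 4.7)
The plan is to apply Theorem~\ref{product} to a general fiber $F=\ph^{-1}(p)$, which is a smooth prime divisor because $\ph$ is a contraction. Theorem~\ref{product} with $D=F$ yields $\rho_X-\rho_F\leq\codim\N(F,X)\leq 8$, giving the first bound $\rho_X\leq\rho_F+8$. If moreover $\rho_X\geq\rho_F+4$, then $\codim\N(F,X)\geq 4$, which excludes case~$(ii)$ of Theorem~\ref{product} (where $\codim\N(F,X)=3$); so case~$(i)$ holds, yielding $X\cong S\times T$ with $S$ a Del Pezzo surface and $F$ dominating $T$ under $\pi_T\colon X\to T$.

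The main step, and the expected obstacle, is to show that $\ph$ factors through $\pi_S\colon X\to S$. The key observation is that $\mathcal{O}_X(F)\cong\ph^*\mathcal{O}_{\pr^1}(1)$, hence $[F]^2=0$ in $H^4(X,\Q)$. Since $X$ is Fano, one has $\operatorname{Pic}(X)=\pi_S^*\operatorname{Pic}(S)\oplus\pi_T^*\operatorname{Pic}(T)$, so write $[F]=\pi_S^*\alpha+\pi_T^*\beta$. The K\"unneth component of $[F]^2$ lying in $H^2(S,\Q)\otimes H^2(T,\Q)$ is $2\alpha\otimes\beta$, which must vanish; hence $\alpha=0$ or $\beta=0$. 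If $\alpha=0$, then $\mathcal{O}_X(F)\cong\pi_T^*L_T$ for some $L_T$ on $T$, and by K\"unneth every section of $\mathcal{O}_X(F)$ is pulled back from $T$; so $F=\pi_T^{-1}(D)$ for a proper divisor $D\subsetneq T$, contradicting that $F$ dominates $T$. Therefore $\beta=0$ and $\mathcal{O}_X(F)\cong\pi_S^*L_S$, and the pencil in $H^0(X,\mathcal{O}_X(F))\cong H^0(S,L_S)$ defining $\ph$ is pulled back from $S$. Thus $\ph=\psi\circ\pi_S$ for some morphism $\psi\colon S\to\pr^1$.

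To conclude, $F=\pi_S^{-1}(\psi^{-1}(p))=\psi^{-1}(p)\times T$. The curve $G:=\psi^{-1}(p)\subset S$ is smooth and connected (as $F$ is), and satisfies $G^2=0$; by adjunction on $S$ one has $2g(G)-2=(K_S+G)\cdot G=K_S\cdot G<0$ since $-K_S$ is ample, so $g(G)=0$, $G\cong\pr^1$, and $F\cong\pr^1\times T$, as claimed.
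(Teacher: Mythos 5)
Your proof is correct and takes the same route as the paper, which disposes of Corollary~\ref{p1} in one line by saying it ``follows directly from Theorem~\ref{product}'' applied to a general fiber $F$; your first paragraph is exactly that application. Your K\"unneth computation from $[F]^2=0$, forcing $\mathcal{O}_X(F)$ to be a pull-back from $S$, is a valid way to supply the factorization step the paper leaves implicit; an equivalent shortcut is to observe that every contraction of $S\times T$ onto $\pr^1$ is induced by a contraction of one of the factors (since $\NE(S\times T)=\NE(S)\times\NE(T)$), and the option through $T$ is ruled out because $F$ dominates $T$.
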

Finally, we notice that some of the properties given by
 Theorem \ref{product} are inherited by
 varieties dominated by a Fano manifold. We give two
applications, and refer the reader to Lemma
\ref{corollZ} for a more general statement.
\begin{corollary}\label{divtocurve}
Let $X$ be a Fano manifold and $\ph\colon X\to Y$ a
surjective morphism. Suppose that there exists a prime divisor $D\subset X$ 
such that $\dim\ph(D)\leq 1$ (this always holds if $\dim Y=2$).
Then $\rho_Y\leq 9$.

Moreover if $\rho_Y\geq 5$ then $\dim Y\leq 3$ and
$X\cong
S\times T$, where $S$ is a Del Pezzo surface.
\end{corollary}
\begin{corollary}\label{3folds}
Let $X$ be a Fano manifold and $\ph\colon X\to Y$ a surjective
morphism 
with $\dim Y=3$. Then $\rho_Y\leq 10$.

Moreover
if $\rho_Y\geq 6$ then $X\cong
S\times T$  
where $S$ is a Del Pezzo surface, $T$ has a contraction
onto $\pr^1$, and $\ph$ factors 
  through $X\to S\times\pr^1$.
\end{corollary}

\medskip

\noindent\textbf{Outline of the paper.}
The idea that a special divisor should affect the geometry of  $X$ is
classical. In \cite{bonwisncamp} Fano manifolds containing a
divisor $D\cong\pr^{n-1}$ with normal bundle
$\mathcal{N}_{D/X}\cong\mathcal{O}_{\pr^{n-1}}(-1)$ are classified. This
classification
has been
extended in \cite{toru} to the case
$\mathcal{N}_{D/X}\cong\mathcal{O}_{\pr^{n-1}}(-a)$ with $a>0$; moreover 
\cite[Proposition 5]{toru} 
shows that if $X$ contains a divisor $D$ with $\rho_D=1$, then
$\rho_X\leq 3$. 
More generally, divisors $D\subset X$ with $\dim\N(D,X)=1$ or $2$ play
an important role in \cite{fanos,31}. 

In section~\ref{Running} we treat the main construction that will be
used in the paper, based on the analysis of a Mori program for $-D$,
where $D\subset X$ is a prime divisor;
this is a development of a technique
 used in \cite{31}.
Let us give 
an idea of our approach, referring the reader to
section~\ref{Running} for more details.

 After  \cite{BCHM,hukeel}, we know that we can run
a Mori program for any divisor in a  Fano 
manifold $X$. In fact we need to consider  \emph{special Mori
  programs}, where all involved extremal rays have positive
intersection with the anticanonical divisor (see section
\ref{Special}). 

Then, given a prime divisor $D\subset X$, 
we consider a special Mori program for $-D$, which roughly means that we
contract or flip extremal rays having positive
intersection with $D$,
until we get a fiber type
contraction such that (the transform of) $D$ dominates the target.
  
If $c:=\codim\N(D,X)>0$,
 by studying how the codimension of $\N(D,X)$ varies under
the birational maps and the related properties of the extremal
rays,
we obtain $c-1$ \emph{pairwise disjoint} prime divisors
$E_1,\dotsc,E_{c-1}\subset X$, all intersecting
$D$, such that each $E_i$ is a smooth $\pr^1$-bundle with
$E_i\cdot f_i=-1$, where $f_i\subset E_i$ is a fiber (see
Proposition \ref{MP2} and Lemma~\ref{pluto}). 
We call $E_1,\dotsc,E_{c-1}$ the $\pr^1$-bundles determined by the
special Mori program for $-D$ that we are considering; they play an
essential role throughout the paper. 
 
We conclude section~\ref{Running} 
proving Theorem \ref{corindex} about the case with pseudo-index $\iota_X>1$.

In section~\ref{Divisors} we consider the following invariant of $X$:
$$c_X:=\max\{\codim\N(D,X)\,|\,D\text{ is a prime divisor in }X\}.$$ 
In terms of this invariant, our main 
result is that $c_X\leq 8$, and if $c_X\geq 3$,
then either $X$ is a product, or $c_X=3$ and $X$ has a flat fibration onto an
$(n-2)$-dimensional Fano manifold (see Theorem \ref{gen} 
for a precise statement).
The proof of this result is quite long: it takes the 
whole section~\ref{Divisors},
and is 
divided in several steps; see \ref{primooutline} for a plan.
The strategy is to apply the construction of section~\ref{Running} to 
prime divisors of ``minimal Picard number'',
\emph{i.e.}\ with $\codim\N(D,X)=c_X$.
We show that there exists a prime divisor $E_0$ 
with $\codim\N(E_0,X)=c_X$, such that $E_0$ is a smooth $\pr^1$-bundle with 
$E_0\cdot f_0=-1$, where $f_0\subset E_0$ is a fiber.
Applying the previous results to $E_0$, we
 obtain a bunch of disjoint divisors with a
$\pr^1$-bundle structure, and
we use them to show that $X$ is a product, or 
to construct a fibration in Del Pezzo surfaces. 

Finally in section~\ref{quarta} we 
use this result (Theorem \ref{gen}) to prove the remaining results stated 
above:  Theorem \ref{product} and its Corollaries \ref{lorenzo} 
to \ref{3folds}.

\medskip

\noindent\textbf{Acknowledgements.} I am grateful to Tommaso de Fernex for an
important suggestion concerning Proposition~\ref{tomm}. 

This paper was written mainly during a
visit to the Mathematical Sciences Research Institute in Berkeley,
for the program in Algebraic Geometry in spring 2009. I
would like to thank MSRI for the kind hospitality, and 
GNSAGA-INdAM 
and the Research Project ``Geometria delle variet\`a 
algebriche e dei loro spazi di moduli'' (PRIN 2006) for financial support. 

\medskip

\noindent\textbf{Notation and terminology}\\
We work over the field of complex numbers.\\
A \emph{manifold} is a smooth variety.\\
A \emph{$\pr^1$-bundle} is a projectivization of a rank $2$ vector bundle.

\smallskip

\noindent Let $X$ be a projective variety.\\
$\N(X)$  (respectively, $\mathcal{N}^1(X)$) is the $\R$-vector 
space of one-cycles (respectively, Cartier divisors) with
real coefficients, modulo numerical equivalence.\\
$[C]$ is the numerical equivalence class in $\N(X)$ of a curve
$C\subset X$; 
$[D]$ is the numerical equivalence class in $\mathcal{N}^1(X)$ of a
$\Q$-Cartier divisor $D$ in $X$.\\
If $E\subset X$ is an irreducible closed subset and $C\subset E$ is a
curve, $[C]_E$ is the numerical equivalence class of $C$ in $\N(E)$.\\
$\equiv$ stands for numerical equivalence (for both 1-cycles and
$\Q$-Cartier divisors).\\ 
For any $\Q$-Cartier divisor $D$ in $X$,
$D^{\perp}:=\{\gamma\in\N(X)\,|\,D\cdot\gamma=0\}$.\\
$\NE(X)\subset\N(X)$ is the convex cone generated by classes of
effective curves, and $\overline{\NE}(X)$ is its closure.\\
An \emph{extremal ray} $R$ of $X$ is a one-dimensional
face of $\overline{\NE}(X)$; $\Lo(R)\subseteq X$ is the union of all
curves whose class is in $R$.\\ 
If $R$ is an extremal ray of $X$ and $D$ is a $\Q$-Cartier divisor in $X$,
we say that $D\cdot R>0$, respectively $D\cdot R=0$, etc.\ if 
for $\gamma\in R\smallsetminus\{0\}$ we have $D\cdot \gamma>0$, 
respectively $D\cdot \gamma=0$, etc. 

\smallskip

\noindent Assume that $X$ is normal.\\
 A \emph{contraction} of $X$  
is a surjective morphism with connected fibers $\ph\colon X\to Y$, where
$Y$ is normal
and projective;
$\NE(\ph)$ is the face of $\overline{\NE}(X)$ generated by classes of curves
contracted by $\ph$.\\
A contraction $\ph\colon X\to Y$ is \emph{elementary} if
$\rho_X-\rho_Y=1$.\\
We say that an elementary contraction $\ph\colon X\to Y$ (or the extremal ray
$\NE(\ph)$)
is of type $(n-1,n-2)^{sm}$ if
it is the blow-up of a smooth codimension $2$ subvariety
contained in the smooth locus of $Y$ (here $n=\dim X$). 

\smallskip

\noindent If $Z\subseteq X$ is a closed subset and 
$i\colon Z\hookrightarrow X$ is the inclusion,
  we set
$$\N(Z,X):=i_*(\N(Z))\subseteq\N(X)\ \text{ and
}\ \NE(Z,X):=i_*(\NE(Z))\subseteq\NE(X)\subset\N(X).$$ 
\section{Mori programs and prime divisors}\label{Running}
\subsection{Special Mori programs in Fano manifolds}\label{Special}
In this section we recall what a Mori program is, and explain that by
\cite{hukeel} and \cite{BCHM} we can run  a Mori program for any
divisor on a Fano manifold. We also introduce and show the existence
of  ``special Mori programs'', where all involved extremal rays have
positive intersection with the anticanonical divisor. 

We begin by recalling
 the following fundamental result.
\begin{thm}[\cite{BCHM}, Corollary 1.3.2]\label{FF}
Any Fano manifold is a Mori dream space.
\end{thm}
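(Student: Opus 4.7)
The plan is to invoke the Hu--Keel characterization of Mori dream spaces and reduce the statement to the finite generation of adjoint rings established in [BCHM]. Recall that a normal $\Q$-factorial projective variety $X$ with finitely generated Picard group is a Mori dream space if and only if its Cox ring
$$\operatorname{Cox}(X)\;=\;\bigoplus_{(m_1,\dots,m_r)\in\mathbb{Z}^r} H^0(X,\mathcal{O}_X(m_1L_1+\cdots+m_rL_r))$$
is finitely generated as a $\mathbb{C}$-algebra, where $L_1,\dots,L_r$ is a basis of $\operatorname{Pic}(X)$. So the task is to check the hypotheses and then finite generation for a Fano manifold.

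First I would verify the preliminary conditions. A Fano manifold is smooth, hence $\Q$-factorial. Moreover, by Kodaira vanishing applied to $-K_X$ we get $H^1(X,\mathcal{O}_X)=0$, so $\operatorname{Pic}(X)$ is a finitely generated torsion-free abelian group coinciding up to torsion with $\mathcal{N}^1(X)$. In particular the Cox ring is well-defined once we fix a basis.

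For the finite generation step, I would choose a general effective $\Q$-divisor $\Delta$ with simple normal crossings support and coefficients in $(0,1)$, small enough that $-(K_X+\Delta)$ remains ample. Then $(X,\Delta)$ is a klt log Fano pair. Applying the main finite generation result of [BCHM] to this pair yields that the restricted/adjoint rings associated to any finite collection of divisors on $X$ are finitely generated; combined with the polyhedrality of the effective and movable cones (also proved in [BCHM] for klt Fano pairs), this gives finite generation of $\operatorname{Cox}(X)$. Hu--Keel then concludes that $X$ is a Mori dream space.

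The main obstacle is of course the finite generation itself, which is the deep content of [BCHM]: it requires the full package of the MMP with scaling, an induction on dimension, and the existence of minimal models for klt pairs with big boundary. Everything else in the argument is essentially formal.
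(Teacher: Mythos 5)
This statement is not proved in the paper at all: it is quoted verbatim as Corollary 1.3.2 of [BCHM] and used as a black box, so there is no internal argument to compare yours against. Your sketch is the standard derivation of that corollary and is essentially correct: Hu--Keel's criterion (a normal $\Q$-factorial projective variety with $\operatorname{Pic}(X)_{\Q}\cong\mathcal{N}^1(X)_{\Q}$ is a Mori dream space iff its Cox ring is finitely generated) reduces everything to finite generation, the hypotheses hold for a Fano manifold since smoothness gives $\Q$-factoriality and Kodaira vanishing gives $H^1(X,\mathcal{O}_X)=0$, and the finite generation itself is exactly the content of the main theorems of [BCHM] applied to a klt log Fano pair. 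Two minor remarks: you do not need to perturb by a boundary $\Delta$, since $(X,0)$ is already a klt pair with $-K_X$ ample, so the pair to which [BCHM] is applied can be taken to be $X$ itself; and the passage from finite generation of the relevant adjoint rings to finite generation of the full Cox ring (via finiteness of ample models and polyhedrality of the pseudoeffective cone) is the step you compress the most, but it is carried out in [BCHM] precisely in the proof of their Corollary 1.3.2, so your outline faithfully reflects where the real work lies.
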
 
\noindent We refer the reader to \cite{hukeel} 
for the definition and properties
of a Mori dream spaces; in particular, 
 a Mori dream space is always a normal and
$\Q$-factorial projective variety. 
We also need
the following.
\begin{proposition}[\cite{hukeel}, Proposition 1.11(1)]\label{MoriProgram}
Let $X$ be a Mori dream space and $B$ a divisor in $X$. Then there exists
a finite sequence
\stepcounter{thm}
\begin{equation}\label{MP}
X=X_0\stackrel{\sigma_0}{\dasharrow} X_1 \dasharrow\quad\cdots
\quad
\dasharrow X_{k-1}\stackrel{\sigma_{k-1}}{\dasharrow} X_k
\end{equation}
such that:
\begin{enumerate}[$\bullet$]
\item every $X_i$ is a normal and $\Q$-factorial projective variety; 
\item for every $i=0,\dotsc,k-1$ there exists an extremal
  ray $Q_i$ of $X_i$ such that $B_i\cdot
  Q_i<0$, where $B_i\subset X_i$ is the transform\footnote{More precisely,
$B_i$ is the transform of $B_{i-1}$ if $\sigma_{i-1}$ is
a flip, and $B_i=(\sigma_{i-1})_*(B_{i-1})$ if $\sigma_{i-1}$ is a
divisorial contraction.} of $B$, $\Lo(Q_i)\subsetneq X_i$, and 
  $\sigma_i$ is either the contraction of $Q_i$ (if $Q_i$ is
  divisorial), or its flip (if $Q_i$ is small); 
\item either $B_k$ is nef, or
there exists an extremal ray $Q_k$ in $X_k$, with a fiber type
  contraction $\ph\colon X_k\to Y$,
such
  that $B_k\cdot
  Q_k<0$.
\end{enumerate}
Moreover, the choice of the extremal rays $Q_i$ is arbitrary among
those that have negative intersection with $B_i$.
\end{proposition}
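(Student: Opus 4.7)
The plan is to construct the sequence \eqref{MP} inductively using the structural properties of Mori dream spaces established in \cite{hukeel}. The key inputs, all due to Hu--Keel, are: (a) the pseudo-effective cone $\Eff(X)$ is rational polyhedral, so $\overline{\NE}(X)$ is rational polyhedral as well; (b) any small $\Q$-factorial modification (flip) along an extremal ray exists; (c) the set of small $\Q$-factorial modifications of $X$ is finite, and $\overline{\Eff}(X)$ admits a finite \emph{Mori chamber} decomposition; (d) both divisorial contractions and flips of a Mori dream space are Mori dream spaces.

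Set $X_0=X$, $B_0=B$. Inductively, suppose we have constructed $X_i$ with transform $B_i$. If $B_i$ is nef on $X_i$, stop with the first alternative of the third bullet. Otherwise, by (a) applied to the Mori dream space $X_i$, the half-space $\{B_i<0\}$ meets $\overline{\NE}(X_i)$ in a union of faces, so it contains an extremal ray; pick any such $Q_i$, in accordance with the ``moreover'' clause. If $\Lo(Q_i)=X_i$, then the contraction of $Q_i$ provided by the Mori dream space structure is a fiber type contraction $\ph\colon X_i\to Y$, and we stop with the second alternative of the third bullet. If $\Lo(Q_i)$ is a divisor we perform the divisorial contraction $\sigma_i\colon X_i\to X_{i+1}$ and set $B_{i+1}:=(\sigma_i)_*B_i$; if $\codim\Lo(Q_i)\geq 2$ we perform the flip $\sigma_i\colon X_i\dashrightarrow X_{i+1}$ supplied by (b) and take $B_{i+1}$ to be the strict transform. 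By (d), $X_{i+1}$ is again a Mori dream space, and we iterate.

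The main obstacle is termination. Divisorial contractions strictly decrease the Picard number, so only finitely many of them can occur; the delicate issue is ruling out an infinite sequence of flips. This is precisely where (c) is used: each flip yields a new small $\Q$-factorial modification of $X$, and once this model is recorded the process can never return to it (the chamber of $\Eff(X)$ corresponding to $B_i$'s position changes strictly at each flip, moving into a previously unvisited chamber of the Mori chamber decomposition). Since there are only finitely many such chambers by (c), the sequence of flips is finite, and the entire procedure terminates. The flexibility in the choice of $Q_i$ at each step does not affect this argument, since termination is deduced purely from the finiteness in (c) and the Picard-rank drop in the divisorial case, which hold regardless of the choices made.
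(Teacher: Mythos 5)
First, note that the paper does not prove this statement at all: it is quoted verbatim from Hu--Keel (\cite{hukeel}, Proposition 1.11(1)) and used as a black box, so there is no internal proof to compare against. Your proposal is a reconstruction of the standard Hu--Keel argument, and its skeleton -- polyhedrality of $\overline{\NE}(X_i)$ giving a $B_i$-negative extremal ray, contractibility of every extremal ray of a Mori dream space, existence of flips, stability of the Mori dream property under the birational steps, and termination via finiteness of small $\Q$-factorial modifications plus the drop in Picard number at divisorial contractions -- is the right one.

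The one genuine gap is in the justification of the key non-trivial point, namely that the flip sequence cannot revisit a model. Your parenthetical reason, ``the chamber of $\Eff(X)$ corresponding to $B_i$'s position changes strictly at each flip, moving into a previously unvisited chamber,'' is incorrect as stated: the class of $B$ is fixed (the flips are isomorphisms in codimension one, so they identify the $\mathcal{N}^1$'s and $[B_i]$ sits in the same chamber throughout); what changes is the \emph{model}, i.e.\ which chamber's associated contraction you are currently standing on, and nothing you have written explains why that path of chambers cannot cycle. Since the ``moreover'' clause allows arbitrary choices of $Q_i$, you cannot appeal to any canonical wall-crossing path either. The standard fix is the negativity lemma: for a $B_i$-flip $X_i\dasharrow X_{i+1}$ with common resolution $p,q\colon W\to X_i,X_{i+1}$ one has $p^*B_i=q^*B_{i+1}+E$ with $E$ effective, $q$-exceptional and nonzero; composing, $p^*B_i-q^*B_j$ is effective and nonzero whenever $j>i$ and at least one flip occurs in between, so $X_i$ and $X_j$ cannot be the same small modification carrying the same transform of $B$. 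Combined with the finiteness of small $\Q$-factorial modifications (applied to the Mori dream space obtained after each divisorial contraction, not to $X$ itself -- another small slip in your write-up), this gives termination. Alternatively, one can argue as in the paper's Proposition~\ref{tomm} via scaling, where the nef thresholds $\lambda_i$ and the finitely many walls crossed by the segment from an ample class to $[B]$ control termination; but that only covers a particular choice of rays, not the arbitrary choices your statement must allow.
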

\noindent A sequence as above is called a {\bf Mori program} for the
  divisor $B$. We refer the reader to \cite[Definition 6.5]{kollarmori} for
  the definition of flip.

An important remark is that when $X$ is Fano, there is always a
suitable choice of a Mori program where all involved extremal rays
have positive intersection with the anticanonical divisor. 
\begin{proposition}\label{tomm}
Let $X$ be a Fano manifold and $B$ a divisor on $X$.
Then there exists a Mori program for $B$ as \eqref{MP},
such that $-K_{X_i}\cdot
Q_i>0$ for every $i=0,\dotsc,k$.
We call such a sequence a
{\bf special Mori program} for $B$.
\end{proposition}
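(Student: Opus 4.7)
The plan is to construct the desired Mori program by running a $B$-MMP with \emph{scaling of $-K_X$}, and to observe that the scaling choice automatically forces every contracted or flipped ray to be $K$-negative. I would set $A_0 := -K_{X_0} = -K_X$, which is ample by hypothesis, and at each stage pick the extremal ray along which $B_i + \lambda_i A_i$ first fails to be nef as $\lambda_i$ decreases from large values.

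In detail, assume $B$ is not already nef (otherwise the empty program works). Since $X_0$ is Fano, $\overline{\NE}(X_0)$ is rational polyhedral, so
\[
\lambda_0 := \inf\{t \geq 0 \,:\, B + tA_0 \text{ is nef}\}
\]
is positive and attained. By standard cone theory there is an extremal ray $Q_0$ of $X_0$ with $B\cdot Q_0<0$ and $(B+\lambda_0 A_0)\cdot Q_0=0$; combining these two relations with $\lambda_0>0$ yields $A_0\cdot Q_0 = -B\cdot Q_0/\lambda_0 > 0$, i.e.\ $-K_{X_0}\cdot Q_0>0$. I then perform along $Q_0$ the operation dictated by Proposition~\ref{MoriProgram} (fiber-type termination, divisorial contraction, or flip). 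Because $Q_0$ is $K_{X_0}$-negative, this is a step of the ordinary $K$-MMP: for a divisorial contraction the exceptional divisor is sent to a locus of codimension at least two, so $\sigma_{0*}K_{X_0}=K_{X_1}$; for a flip, $K_{X_1}$ is by definition the birational transform of $K_{X_0}$. Either way the transform $A_1$ of $A_0$ on $X_1$ coincides with $-K_{X_1}$, and the setup is preserved. Iterating with $(X_1,A_1,B_1)$ in place of $(X_0,A_0,B_0)$ produces the sequence $Q_0,\dotsc,Q_{k-1}$ of divisorial or flipping rays, together with a possible fiber-type ray $Q_k$ at the end, each automatically $K$-negative. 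Termination is guaranteed by the Mori dream space structure of every $X_i$, which is preserved along $K$-negative MMP steps by \cite{hukeel}.

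The main subtle point is the inductive identification $A_i = -K_{X_i}$: it rests on each preceding step $\sigma_{i-1}$ being genuinely $K$-negative, hence on $\lambda_{i-1}>0$. But $\lambda_i=0$ is precisely the criterion for $B_i$ to be nef, which is the termination condition for the program, so no step with $\lambda=0$ is ever performed. Applying the same scaling argument at the terminal stage handles $Q_k$ in the fiber-type case, giving $-K_{X_k}\cdot Q_k>0$ and completing the construction.
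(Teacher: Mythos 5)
Your construction is essentially the paper's own proof: both run the MMP with scaling of $-K_X$, at each step locating the threshold where a combination of $B_i$ and $-K_{X_i}$ leaves the nef cone and extracting a ray on which $B_i$ is negative and $-K_{X_i}$ is positive. The paper uses the convex combination $(1-\lambda)(-K_{X_i})+\lambda B_i$ while you use $B_i+t(-K_{X_i})$; these differ only by rescaling. There is, however, one point where your write-up has a real gap. For $i\geq 1$ the divisor $A_i=-K_{X_i}$ is in general no longer ample, and need not even be nef (the target of a $K$-negative divisorial contraction of a Fano manifold need not be Fano), so it is not automatic that $B_i+tA_i$ is nef ``for large $t$''; consequently the set over which you take the infimum defining $\lambda_i$ could a priori be empty, and the phrase ``first fails to be nef as $\lambda_i$ decreases from large values'' does not describe the actual shape of the nefness locus, which is merely a closed interval. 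What rescues the iteration is that the nef-but-not-ample divisor $B_{i-1}+\lambda_{i-1}A_{i-1}$ is trivial on $Q_{i-1}$, hence its transform $B_i+\lambda_{i-1}A_i$ is again nef on $X_i$ (it descends through the divisorial contraction, respectively stays nef across the flip); this is exactly the sentence the paper inserts at each step. With that one-line addition your nefness set is a nonempty closed interval whose left endpoint $\lambda_i$ is positive whenever $B_i$ is not nef, and the rest of your argument, including the identity $A_i\cdot Q_i=-B_i\cdot Q_i/\lambda_i>0$, goes through verbatim.
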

This is a very special case of the MMP with scaling, see
\cite[Remark 3.10.9]{BCHM}. For the reader's convenience, we give a
proof. The idea is to choose a facet of the cone of nef divisors
$\Nef(X)\subset\mathcal{N}^1(X)$
met by moving from $[B]$ to $[-K_X]$ along a line in $\mathcal{N}^1(X)$, and
to repeat the same at each step. 
\begin{proof}[Proof of Proposition \ref{tomm}]
By Theorem \ref{FF} $X$ is a Mori dream space, therefore Proposition
\ref{MoriProgram} 
applies to $X$, and there exists a Mori program for $B$. 
We have to prove that we can choose $Q_0,\dotsc,Q_k$ with $B_i\cdot Q_i<0$
and
 $-K_{X_i}\cdot Q_i>0$ for all $i=0,\dotsc,k$.

We can assume that $B$ is not nef. Set
$$\lambda_0:=\sup\{\lambda\in\R\,|\,(1-\lambda)(-K_X)+\lambda B\text{
  is nef}\},$$
so that $\lambda_0\in\Q$, $0<\lambda_0<1$, and
$H_0:=(1-\lambda_0)(-K_X)+\lambda_0 B$ is nef but not ample. 
Then there exists an extremal
ray $Q_0$ of $\NE(X)$ such that $H_0\cdot Q_0=0$ and $B\cdot Q_0<0$;
in particular, $-K_X\cdot Q_0>0$.  

If $Q_0$ is of fiber type, we are done. Otherwise, let $\sigma_0\colon
X\dasharrow X_1$ be either the contraction of $Q_0$ (if divisorial), 
or its flip (if small), and let $B_1$ be the transform of $B$. Then 
$(1-\lambda_0)(-K_{X_1})+\lambda_0 B_1$ is nef in $X_1$.

If $B_1$ is nef we are done. If not, we set
$$\lambda_1:=\sup\{\lambda\in\R\,|\,(1-\lambda)(-K_{X_1})+\lambda B_1\text{
  is nef}\},$$
so that $\lambda_1\in\Q$, $\lambda_0\leq\lambda_1<1$, and
$H_1:=(1-\lambda_1)(-K_{X_1})+\lambda_1 B_1$ is nef but not ample. 
There exists an extremal
ray $Q_1$ of $\NE(X_1)$ such that $H_1\cdot Q_1=0$ and $B_1\cdot
Q_1<0$, hence $-K_{X_1}\cdot Q_1>0$. Now we iterate the procedure.
\end{proof}
\subsection{Running a Mori program for $-D$}\label{run}
In this section we study in detail what happens when we run a Mori
program for $-D$, where $D$ is a prime divisor. This point of view has
already been considered in \cite{31}, and is somehow opposite to the
classical one: we consider extremal rays having \emph{positive} 
intersection with $D$. 
In particular, we are interested in how the number $\codim\N(D,X)$
varies under the Mori program. 

We first describe the general situation for a prime divisor $D$ in a
Mori dream space (Lemma~\ref{general}), and then consider the case of
a \emph{special} Mori program for $-D$ where $D$ is a prime divisor in  
 a Fano manifold
 (Lemma~\ref{pluto}). In particular, we will show the following.
\begin{proposition}\label{MP2}
Let $X$ be a Fano manifold and $D\subset X$ a prime
divisor. Suppose that $\codim\N(D,X)>0$. 

Then there exist
pairwise disjoint smooth prime divisors $E_1,\dotsc,E_s\subset X$, with
$s=\codim\N(D,X)-1$ or $s=\codim\N(D,X)$,
such that
 every $E_j$ is a $\pr^1$-bundle with  $E_j\cdot
f_j=-1$, where  $f_j\subset E_j$ is a fiber;
moreover $D\cdot f_j>0$ and $[f_j]\not\in\N(D,X)$. In particular
$E_j\cap D\neq\emptyset$ and $E_j\neq D$.
\end{proposition}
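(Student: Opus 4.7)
My plan is to invoke Proposition \ref{tomm} and run a special Mori program for $-D$:
$$X = X_0 \stackrel{\sigma_0}{\dashrightarrow} X_1 \dashrightarrow \cdots \dashrightarrow X_k,$$
writing $D_i \subset X_i$ for the transform of $D$, with extremal rays $Q_i$ satisfying $D_i \cdot Q_i > 0$ and $-K_{X_i} \cdot Q_i > 0$, and terminating either with $-D_k$ nef or at a fiber type contraction $X_k \to Y$ on which $D_k \cdot Q_k > 0$.

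The key invariant to track is $c_i := \codim_{\N(X_i)} \N(D_i, X_i)$. By the pushforward analysis to be established in Lemma \ref{general}, flips preserve $c_i$, while a divisorial contraction $\sigma_i$ with contracted-curve class $[\widetilde{f}_i]$ drops $c_i$ by $1$ if $[\widetilde{f}_i] \notin \N(D_i, X_i)$ and leaves it unchanged otherwise. At termination, $c_k \in \{0,1\}$: when $X_k \to Y$ is fiber type, $D_k$ dominates $Y$ so $\N(D_k, X_k)$ has codimension at most $1$ in $\N(X_k)$; the case $-D_k$ nef is handled similarly via the structure of $\overline{\NE}(X_k)$. Consequently, the number $s$ of codimension-dropping divisorial contractions equals $c_0 - c_k \in \{c_0-1, c_0\}$, as required. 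For each such step $\sigma_{i_j}$, Lemma \ref{pluto} (the Fano-specific input) should show $\sigma_{i_j}$ is a smooth blow-up of a codimension-$2$ smooth subvariety, so its exceptional divisor $\widetilde{E}_{i_j}$ is a smooth $\pr^1$-bundle with $\widetilde{E}_{i_j} \cdot \widetilde{f}_{i_j} = -1$; moreover $D_{i_j} \cdot \widetilde{f}_{i_j} > 0$ by the Mori program setup. I then set $E_j \subset X$ to be the strict transform of $\widetilde{E}_{i_j}$ back through $\sigma_0, \ldots, \sigma_{i_j-1}$, and the numerical conditions $D \cdot f_j > 0$ and $[f_j] \notin \N(D, X)$ descend to $X$ by naturality of pushforward.

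The main obstacle is proving pairwise disjointness of the $E_j$, together with the fact that the strict transforms genuinely inherit the $\pr^1$-bundle structure. Both should fall out of Lemma \ref{pluto}: if two exceptional divisors (after strict transform to a common $X_\ell$) intersected, then the fiber class of one would lie inside $\N(\widetilde{E}_{i_{j'}}, X_\ell)$, contradicting the independence of the codimension-dropping fiber classes from $\N(D_\ell, X_\ell)$ that the count argument already supplies. With disjointness secured, each preceding map is an isomorphism on a neighborhood of $\widetilde{E}_{i_j}$, so the $\pr^1$-bundle structure and the $-1$ normal bundle transfer to $E_j \subset X$; finally $E_j \cap D \neq \emptyset$ and $E_j \neq D$ follow from $D \cdot f_j > 0$.
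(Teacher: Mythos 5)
Your strategy is the paper's own: run a special Mori program for $-D$ (Proposition \ref{tomm}), track $c_i=\codim\N(D_i,X_i)$ through the steps as in Lemma \ref{general}, note that $c_k\in\{0,1\}$ at the terminal fiber-type contraction, and take for $E_1,\dotsc,E_s$ the transforms of the exceptional divisors of the codimension-dropping steps. The counting part is fine (one small slip: Lemma \ref{general} does \emph{not} say that flips preserve $c_i$ — a priori any step with $Q_i\not\subset\N(D_i,X_i)$ drops $c_i$ by one, flip or divisorial; it is only the Fano-specific Lemma \ref{pluto}(1), resting on \cite[Lemma~3.9]{31}, that forces the dropping rays to be divisorial of type $(n-1,n-2)^{sm}$). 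Also, the case ``$-D_k$ nef'' never occurs, since $-D_k$ is the negative of a nonzero effective divisor.

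The genuine gap is in the step you yourself flag as the main obstacle. Your disjointness argument does not work: if two of the transformed exceptional divisors met in some intermediate $X_\ell$, there is no reason the fiber class of one would lie in $\N$ of the other (the second divisor could meet every fiber of the first positively), and even if it did, that class would lie in $\N$ of the \emph{other exceptional divisor}, not in $\N(D_\ell,X_\ell)$, so the numerical independence supplied by the count gives no contradiction. Worse, even granting pairwise disjointness of the $E_j$, it would not follow that the earlier birational maps are isomorphisms near $\Exc(\sigma_{i_j})$: those maps include flips and divisorial contractions whose loci are not among the $E_j$. What is actually needed — and what the paper proves — is the stronger statement $\Exc(\sigma_{i_j})\cap A_{i_j}=\emptyset$, where $A_{i_j}$ is the accumulated indeterminacy locus of \ref{general}(4); this is the content of \cite[Lemma~3.9]{31} and is obtained from anticanonical-degree estimates for curves meeting $A_{i_j}$ along the special Mori program, not from numerical independence of the fiber classes. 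From that single statement both the transfer of the $\pr^1$-bundle structure to $X$ and the pairwise disjointness of the $E_j$ follow; without it your proof is missing its essential input.
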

\noindent It is important to point out that
 the $\pr^1$-bundles $E_1,\dotsc,E_s$ are determined
 not only by $D$, but
by the choice of a special Mori program for $-D$ (see
 Lemma~\ref{pluto}). In fact the divisors $E_j$ are the transforms of
 the loci of some of the extremal rays of the Mori program, the ones
 where $\codim\N(D,X)$ drops. 

Finally we study in more detail the case where $s=\codim\N(D,X)-1$ in the 
Proposition above; in this situation
we show that there is an open subset of $X$ which has a conic bundle structure
(see Lemma~\ref{conicbdl}).

We conclude the section  with the proof of
Theorem \ref{corindex}. 
\begin{lemma}\label{general}
Let $X$ be a Mori dream space and $D\subset X$ a prime divisor. 
Consider a Mori program for $-D$:
$$
X=X_0\stackrel{\sigma_0}{\dasharrow} X_1 \dasharrow\quad\cdots
\quad
\dasharrow X_{k-1}\stackrel{\sigma_{k-1}}{\dasharrow} X_k.
$$
Let $D_i\subset X_i$ be the transform of $D$, for $i=1,\dotsc,k$, and
set $D_0:=D$, so that $D_i\cdot Q_i>0$ for $i=0,\dotsc,k$. We have
the following.  
\begin{enumerate}[$(1)$]
\item Every $D_i$ is a prime divisor in $X_i$, and
the program ends with an elementary contraction of fiber type
$\ph\colon X_k\to Y$ such that $\NE(\ph)=Q_k$ and
$\ph(D_k)=Y$. 
\item $\ \#\{i\in\{0,\dotsc,k\}\,|\,Q_i\not\subset\N(D_i,X_i)\}=\codim\N(D,X)$. 
\item Set $c_i:=\codim\N(D_i,X_i)$ for $i=0,\dotsc,k$.
For every $i=0,\dotsc,k-1$ we have
$$c_{i+1}=\begin{cases} c_{i}\quad&\text{if $Q_i\subset\N(D_i,X_i)$}\\
c_{i}-1\quad&\text{if $Q_i\not\subset\N(D_i,X_i)$}
\end{cases},\ \text{ and }
c_{k}=\begin{cases} 0\quad&\text{if $Q_k\subset\N(D_k,X_k)$}\\
1\quad&\text{if $Q_k\not\subset\N(D_k,X_k)$.}
\end{cases}$$ 
\item Suppose that $X$ is smooth. Let  
$A_1\subset X_1$ be the indeterminacy locus of $\sigma_0^{-1}$,
and for $i=2,\dotsc,k$, if  $\sigma_{i-1}$ is a divisorial contraction
(respectively, if $\sigma_{i-1}$ is a flip), let
$A_i\subset X_i$ be the union of $\sigma_{i-1}(A_{i-1})$ (respectively, the
transform of $A_{i-1}$) and  
the indeterminacy locus of
$\sigma_{i-1}^{-1}$.

Then for all $i=1,\dotsc,k$
we have $\Sing(X_i)\subseteq A_i\subset D_i$, and
the birational map $X_i\dasharrow X$ is an
isomorphism over $X_i\smallsetminus A_i$.
\end{enumerate}
\end{lemma}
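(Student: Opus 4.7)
The plan is to prove the four parts in the order $(1)$, $(3)$, $(2)$, $(4)$, with $(3)$ being the main technical input, $(2)$ following by telescoping, and $(4)$ a separate induction.

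For $(1)$, I would show by induction that each $D_i$ is prime: flips preserve primeness since they are isomorphisms in codimension one, while for a divisorial $\sigma_i$ the standard fact $\Exc(\sigma_i)\cdot Q_i<0$ combined with $D_i\cdot Q_i>0$ forces $D_i\neq \Exc(\sigma_i)$, so $D_{i+1}$ is the pushforward of a prime divisor not contracted. The program cannot end with $-D_k$ nef, since $D_k$ is prime and effective forces $D_k\cdot H^{n-1}>0$ for any ample $H$ on $X_k$, contradicting $-D_k$ nef. Hence the last step is an elementary fiber type contraction $\ph\colon X_k\to Y$ of a ray $Q_k$ with $D_k\cdot Q_k>0$, and $D_k$ meets every fiber of $\ph$ positively, giving $\ph(D_k)=Y$.

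Part $(3)$ is the heart of the lemma. For a single step $\sigma_i$, let $l_i$ be a curve whose class generates $Q_i$. In the divisorial case, $(\sigma_i)_*\colon \N(X_i)\to \N(X_{i+1})$ is surjective with kernel $\R[l_i]$, and the key claim is
$$(\sigma_i)_*(\N(D_i,X_i))=\N(D_{i+1},X_{i+1}).$$
The inclusion $\subseteq$ is immediate; for $\supseteq$, given $C'\subset D_{i+1}$, if $C'\not\subset\sigma_i(\Exc(\sigma_i))$ its strict transform lies in $D_i$, and if $C'\subset\sigma_i(\Exc(\sigma_i))$ then $D_i\cdot Q_i>0$ forces $D_i$ to meet every fiber of $\sigma_i|_{\Exc(\sigma_i)}$, so $D_i\cap\sigma_i^{-1}(C')$ surjects onto $C'$ and contains an irreducible curve $\tilde C$ mapping surjectively to $C'$, producing $[C']$ (up to a positive multiple) as $(\sigma_i)_*[\tilde C]$. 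In the flip case, via a common resolution I identify $\N(X_i)\cong\N(X_{i+1})$; the sign-flip property $D_{i+1}\cdot Q_i^+<0$ (inherited from $D_i\cdot Q_i>0$ by the standard behaviour of intersection numbers under flips) places the flipped locus inside $D_{i+1}$, and combined with the isomorphism outside the small loci this yields $\N(D_{i+1},X_{i+1})=\N(D_i,X_i)+\R[l_i]$. A dimension count then gives $c_{i+1}=c_i$ or $c_i-1$ according to whether $Q_i\subset \N(D_i,X_i)$ or not. For the endpoint, $\ph_*\colon \N(X_k)\to \N(Y)$ has kernel $\R[l_k]$; since $\ph|_{D_k}$ is surjective any curve in $Y$ lifts to a curve in $D_k$, hence $\ph_*(\N(D_k,X_k))=\N(Y)$, forcing $c_k\in\{0,1\}$ according to whether $Q_k\subset \N(D_k,X_k)$ or not.

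Part $(2)$ follows by telescoping: the drops at indices $i\in\{0,\ldots,k-1\}$ sum to $c_0-c_k$, and the endpoint contribution equals $c_k$ (by the last sentence of $(3)$), giving total count $c_0=\codim\N(D,X)$. For $(4)$, I induct on $i$. The base case uses smoothness of $X$: $\Sing(X_1)$ lies in the indeterminacy locus of $\sigma_0^{-1}$, which is $A_1$, and $A_1\subset D_1$ follows from the same observations used in $(3)$ ($D_0\cdot Q_0>0$ puts $\sigma_0(\Exc(\sigma_0))$ inside $D_1$ in the divisorial case, and the sign flip puts the flipped locus inside $D_1$ in the flip case). The inductive step is bookkeeping: new singularities of $X_{i+1}$ lie on the indeterminacy locus of $\sigma_i^{-1}$, old singularities transfer via direct image or strict transform into $A_{i+1}$, the inclusion $A_{i+1}\subset D_{i+1}$ is preserved by $\sigma_i$ using $A_i\subset D_i$ and the base-case argument applied to $\sigma_i$, and the isomorphism $X_{i+1}\setminus A_{i+1}\to X$ factors as the inductive isomorphism at level $i$ composed with $\sigma_i^{-1}$ on the complement of the indeterminacy locus.

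The main obstacle I expect is the case analysis in $(3)$: in the divisorial case, the surjectivity $(\sigma_i)_*(\N(D_i,X_i))=\N(D_{i+1},X_{i+1})$ for curves lying inside $\sigma_i(\Exc)$ requires a careful dimension count to produce an irreducible curve in $D_i\cap \sigma_i^{-1}(C')$ surjecting onto $C'$; and in the flip case, the correct identification of $\N(X_i)$ with $\N(X_{i+1})$ together with the sign-flip property for the flipped ray needs the standard (but delicate) behaviour of intersection numbers under flips.
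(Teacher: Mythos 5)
Your proposal is correct and follows essentially the same route as the paper: primeness and the fiber-type ending as in your part $(1)$, the step-by-step codimension bookkeeping of part $(3)$ with $(2)$ obtained by telescoping, and part $(4)$ by tracking indeterminacy loci. The only presentational difference is in the flip step, where the paper makes your identity $\N(D_{i+1},X_{i+1})=\N(D_i,X_i)+\R[l_i]$ precise by pushing both subspaces forward to the base $Y_i$ of the flipping contraction (using $\ph_i(D_i)=\ph_i'(D_{i+1})$ together with $\ker(\ph_i')_*\subseteq\N(D_{i+1},X_{i+1})$, the latter being exactly your sign-flip observation that the flipped locus lies in $D_{i+1}$); this is the clean way to compare classes of curves that meet, but are not contained in, the small loci, which your appeal to ``the isomorphism outside the small loci'' leaves slightly implicit.
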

\begin{proof}
Most of the statements are shown in \cite{31} (see in particular
Remarks 2.5 and 2.6, and Lemma 3.6); for
the reader's convenience we give a proof. We have $D_i\cdot Q_i>0$ for
every $i=0,\dotsc,k$, just by the definition of Mori program for
$-D$. 
 
Let $i\in\{0,\dotsc,k-1\}$ be such that $\sigma_i$ is a divisorial
contraction. Then $D_i\neq\Exc(\sigma_i)$ (for otherwise $D_i\cdot
Q_i<0$), hence $D_{i+1}=\sigma_i(D_i)\subset X_{i+1}$ is a prime
divisor. On the other hand 
 $D_i$ intersects every non-trivial fiber of $\sigma_i$ (because
$D_i\cdot Q_i>0$), in particular $D_i\cap \Exc(\sigma_i)\neq\emptyset$
and $D_{i+1}\supset\sigma_i(\Exc(\sigma_i))$. Notice that
$\sigma_i(\Exc(\sigma_i))$ is the indeterminacy locus of
$\sigma_i^{-1}$. 

Consider the push-forward $(\sigma_i)_*\colon
\N(X_i)\to\N(X_{i+1})$. We have $\ker(\sigma_i)_*=\R Q_i$ and
$\N(D_{i+1},X_{i+1})=(\sigma_i)_*(\N(D_i,X_i))$, therefore
$c_{i+1}=c_i$ if $Q_i\subset\N(D_i,X_i)$, and $c_{i+1}=c_i-1$
otherwise. 

\medskip

Now let $i\in\{0,\dotsc,k-1\}$ be such that $\sigma_i$ is a flip,
and consider the standard flip diagram: 
$$\xymatrix{{X_i}\ar[dr]_{\ph_i}\ar@{-->}[rr]^{\sigma_i}&&{X_{i+1}}\ar[dl]^{\ph_i'}\\
&{Y_i}&}$$
where $\ph_i$ is the contraction of $Q_i$, and $\ph_i'$ is the
corresponding
 small elementary contraction of $X_{i+1}$. We have
$D_{i+1}\cdot\NE(\ph_i')<0$, in particular  $\Exc(\ph_i')\subset D_{i+1}$ and
$\NE(\ph_i')\subset\N(D_{i+1},X_{i+1})$. Notice that $\Exc(\ph_i')$ is the
indeterminacy locus of $\sigma_i^{-1}$. 

Moreover $\ph_i(D_i)=\ph_i'(D_{i+1})$, so that 
$$(\ph_i)_*\left(\N(D_i,X_i)\right)=\N(\ph_i(D_i),Y_i) 
=(\ph_i')_*\left(\N(D_{i+1},X_{i+1})\right).$$
Since $\ker(\ph_i')_*\subseteq \N(D_{i+1},X_{i+1})$, we have $c_{i+1}=\codim
\N(\ph_i(D_i),Y_i)$. We deduce again that  $c_{i+1}=c_i$ if
$Q_i\subset\N(D_i,X_i)$, and $c_{i+1}=c_i-1$ otherwise. 

\medskip

In particular the preceding analysis shows that for every
$i=1,\dotsc,k$ the divisor $D_i$ contains the indeterminacy locus of
$\sigma_i^{-1}$, so that $A_i\subset D_i$. By definition, $A_i$ contains the
indeterminacy locus of the birational map
$(\sigma_{i-1}\circ\cdots\circ\sigma_{0})^{-1}\colon X_i\dasharrow X$;
in particular $X_i\smallsetminus A_i$ is isomorphic to an open subset
of $X$, thus it is smooth if $X$ is smooth. This shows $(4)$. 

\medskip

Consider now the prime divisor $D_k\subset X_k$. Clearly $-D_k$ cannot
be nef, therefore the program ends with a fiber type contraction
$\ph\colon X_k\to Y$.  
Since $D_k\cdot Q_k>0$, $D_k$ intersects every fiber of $\ph$, namely
$\ph(D_k)=Y$, and we have $(1)$.

 In particular $\ph_*(\N(D_k,X_k))=\N(Y)$, hence either
$c_k=0$ (\emph{i.e.}\ $\N(D_k,X_k)=\N(X_k)$), or
$c_k=1$ and $Q_k\not\subset\N(D_k,X_k)$. 
Thus we have  $(3)$, which implies directly
$(2)$.
\end{proof}
\begin{lemma}\label{pluto}
Let $X$ be a Fano manifold and $D\subset X$ a prime divisor.
Consider a special Mori program for $-D$:
$$
X=X_0\stackrel{\sigma_0}{\dasharrow} X_1 \dasharrow\quad\cdots
\quad
\dasharrow X_{k-1}\stackrel{\sigma_{k-1}}{\dasharrow} X_k.
$$
Then we
have the following (we keep the notation of Lemma~\ref{general}).
\begin{enumerate}[$(1)$]
\item Let $i\in\{0,\dotsc,k-1\}$ be such that
  $Q_i\not\subset\N(D_i,X_i)$. \\
Then  $Q_{i}$ is of type
$(n-1,n-2)^{sm}$, \emph{i.e.}\ $\sigma_{i}\colon
X_{i}\to X_{i+1}$ is the blow-up of  a smooth
subvariety of codimension $2$, contained in the smooth locus of 
$X_{i+1}$. Moreover $\Exc(\sigma_{i})\cap A_{i}=\emptyset$,
hence $\Exc(\sigma_{i})$ does not intersect
the loci of the birational maps $\sigma_l$ for $l<i$.
\item Set
  $\,s:=\#\{i\in\{0,\dotsc,k-1\}\,|\,Q_i\not\subset\N(D_i,X_i)\}$.
We have two possibilities:\\
 either $\,s=\codim\N(D,X)\,$ and 
$\,\N(D_k,X_k)=\N(X_k)$, \\
or $\,s=\codim\N(D,X)-1,\,$ $\,Q_k\not\subset
\N(D_k,X_k),\,$ and $\,\codim\N(D_k,X_k)=1$.
\item 
Set 
$\,\{i_1,\dotsc,i_s\}:=\left\{i\in\{0,\dotsc,k-1\}\,|\,Q_i\not\subset
\N(D_i,X_i)\right\},\,$ and
let $E_j\subset X$ be the transform of $\Exc(\sigma_{i_j})\subset
 X_{i_j}$ for every $j=1,\dotsc,s$.\\  Then  $E_j$ is a 
smooth $\pr^1$-bundle, with fiber  $f_j\subset E_j$, such that
 $E_j\cdot
f_j=-1$, $D\cdot f_j>0$, and $[f_j]\not\in\N(D,X)$. In particular
$E_j\cap D\neq\emptyset$ and $E_j\neq D$.
\item The prime divisors
$E_1,\dotsc,E_s$ are
pairwise disjoint. 
\end{enumerate}
\end{lemma}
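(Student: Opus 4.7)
The essential content is part~(1); parts~(2)--(4) will follow by bookkeeping using Lemma~\ref{general}. For part~(1), fix $i\in\{0,\dotsc,k-1\}$ with $Q_i\not\subset\N(D_i,X_i)$. The plan is first to argue $\Lo(Q_i)\not\subset D_i$ by contraposition: if every curve with class in $Q_i$ lay in $D_i$, then $Q_i\subset\N(\Lo(Q_i),X_i)\subset\N(D_i,X_i)$, a contradiction. Since $A_i\subset D_i$ by Lemma~\ref{general}(4), a general curve $C$ of class in $Q_i$ then meets $A_i$ in at most finitely many points, so it lives generically in the smooth locus of $X_i$, which is crucial for applying smooth-variety Mori theory.

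The crux is to exploit the ``special'' hypothesis $-K_{X_i}\cdot Q_i>0$, together with this generic smoothness, to apply the usual Mori-theoretic machinery: bend-and-break produces rational curves in $Q_i$ with controlled anticanonical degree, and the Ionescu--Wi\'sniewski length inequality $\dim\Exc(\sigma_i)+\dim F\geq n+\ell(Q_i)-1$ constrains the contraction. The target is to force $\ell(Q_i)=1$ and $\dim F=1$, at which point the Andreatta--Wi\'sniewski classification of elementary contractions with $1$-dimensional fibers pins down $\sigma_i$ as the blow-up of a smooth codimension-$2$ subvariety lying in the smooth locus of $X_{i+1}$. This length-and-dimension step is the main obstacle: the only positivity inputs are $-K_{X_i}\cdot Q_i>0$ and $D_i\cdot Q_i>0$ with $\Lo(Q_i)\not\subset D_i$, and one must leverage them on a variety that is only generically smooth, in particular ruling out small (flipping) rays and bounding the length. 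Once the structure of $\sigma_i$ is established, $\Exc(\sigma_i)\cap A_i=\emptyset$ follows by tracking how $A_i$ propagates: the center of $\sigma_i$ sits in the smooth locus of $X_{i+1}$, which is disjoint from $\sigma_i(A_i)$, and $\sigma_i$ is an isomorphism off its exceptional divisor.

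With part~(1) at hand, part~(2) is a direct count: by Lemma~\ref{general}(3), $c_i$ drops by exactly one at step $\sigma_i$ precisely when $Q_i\not\subset\N(D_i,X_i)$, and the terminal analysis in Lemma~\ref{general}(1)--(3) supplies the two alternatives for $s$. For part~(3), part~(1) ensures $\Exc(\sigma_{i_j})\subset X_{i_j}\smallsetminus A_{i_j}$, and by Lemma~\ref{general}(4) this open subset is isomorphic to an open subset of $X$; this transports over to $X$ both the $\pr^1$-bundle structure and the normal-bundle equality $E_j\cdot f_j=-1$ coming from an $(n-1,n-2)^{sm}$ exceptional divisor, while $D\cdot f_j>0$ and $[f_j]\not\in\N(D,X)$ reflect $D_{i_j}\cdot Q_{i_j}>0$ and $Q_{i_j}\not\subset\N(D_{i_j},X_{i_j})$ respectively. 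Part~(4) proceeds in the same spirit: for $j<k$, the successive transforms of the center $\sigma_{i_j}(\Exc(\sigma_{i_j}))$ are absorbed into $A_{i_k}$ by the inductive definition of the $A_\bullet$, whereas $\Exc(\sigma_{i_k})$ is disjoint from $A_{i_k}$ by part~(1); the isomorphism $X_{i_k}\smallsetminus A_{i_k}\hookrightarrow X$ therefore places $E_j$ and $E_k$ in disjoint regions of $X$.
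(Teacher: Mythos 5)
Your parts (2)--(4) track the paper closely and are essentially correct: (2) is the bookkeeping from Lemma~\ref{general}(2)--(3); (3) uses that $\Exc(\sigma_{i_j})\cap A_{i_j}=\emptyset$ makes $E_j\cong\Exc(\sigma_{i_j})$ via the isomorphism of Lemma~\ref{general}(4), with $D\cdot f_j>0$ and $[f_j]\not\in\N(D,X)$ reflecting $D_{i_j}\cdot Q_{i_j}>0$ and $Q_{i_j}\not\subset\N(D_{i_j},X_{i_j})$; and (4) follows because each $\Exc(\sigma_{i_j})$ avoids $A_{i_j}$, which contains the transforms of all earlier exceptional loci. This is exactly how the paper argues those parts.

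The problem is part (1), which carries the entire content of the lemma. The paper does not reprove it here --- it cites \cite[Lemma~3.9]{31} --- but your proposed reconstruction has two concrete defects. First, the mechanism you invoke to pin down $\sigma_i$ (bend-and-break together with the length inequality $\dim\Exc(\sigma_i)+\dim F\geq n+\ell(Q_i)-1$) produces \emph{lower} bounds on fiber dimension, so it cannot ``force $\dim F=1$''; you flag this yourself as ``the main obstacle'' and leave it unresolved. The actual argument is elementary and of a different nature: since $D_i\cdot Q_i>0$, every nontrivial fiber $F$ of the contraction of $Q_i$ meets $D_i$, and if $\dim(F\cap D_i)\geq 1$ then a curve in $F\cap D_i$ would have class in $Q_i$ and lie in $D_i$, contradicting $Q_i\not\subset\N(D_i,X_i)$; hence every nontrivial fiber is one-dimensional. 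This is what rules out small rays (a small contraction of a smooth variety cannot have one-dimensional fibers, by \cite[Theorem~4.1]{AWaview}) and, via \cite[Theorem~2.3]{ando}, yields the type $(n-1,n-2)^{sm}$ --- the same argument the paper spells out for $Q_k$ in the proof of Lemma~\ref{conicbdl}(1). Second, your derivation of $\Exc(\sigma_i)\cap A_i=\emptyset$ is incorrect: you assert that the smooth locus of $X_{i+1}$ is disjoint from $\sigma_i(A_i)$, but Lemma~\ref{general}(4) only gives $\Sing(X_i)\subseteq A_i$, not the converse, so $A_i$ may consist largely of smooth points. The disjointness actually rests on the anticanonical-degree control of \cite[Lemma~3.8]{31}: curves whose transforms meet the loci of the earlier maps have anticanonical degree strictly greater than $1$, whereas the fibers of an $(n-1,n-2)^{sm}$ contraction have degree exactly $1$; moreover this degree argument is also needed \emph{before} applying Ando's theorem, to guarantee smoothness of $X_i$ along the fibers in question.
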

\noindent We call $E_1,\dotsc,E_s$ {\bf the $\pr^1$-bundles determined
  by the special Mori program for $-D$} that we are considering.
These divisors will play a key role
throughout the paper.

Notice that Proposition \ref{MP2} is a straightforward consequence of
Proposition \ref{tomm} and of Lemma~\ref{pluto}, more precisely
of \ref{pluto}$(3)$ and \ref{pluto}$(4)$. 
\begin{proof}
Statement  $(1)$ follows from
\cite[Lemma~3.9]{31}.

By \ref{general}$(2)$ we have 
$$s=\begin{cases}\codim\N(D,X)\quad&\text{if $Q_k\subset \N(D_k,X_k)$,}\\
\codim\N(D,X)-1\quad&\text{if $Q_k\not\subset\N(D_k,X_k)$.}
\end{cases}$$
 Together with \ref{general}$(3)$ this yields
 $(2)$.

Let $j\in\{1,\dotsc,s\}$. By (1) we have $E_j\cong\Exc(\sigma_{i_j})$,
thus  $E_j$ is a smooth $\pr^1$-bundle with
 $E_j\cdot f_j=-1$, where $f_j\subset E_j$ is a fiber, and
 $D\cdot f_j>0$ because $D_{i_j}\cdot Q_{i_j}>0$ in $X_{i_j}$. 
In particular
$E_j\cap D\neq\emptyset$ and $E_j\neq D$.
Moreover
$[f_j]\subset\N(D,X)$ would yield $Q_{i_j}\subset\N(D_{i_j},X_{i_j})$,
 which is excluded by definition. Therefore we have $(3)$.

Finally $E_1,\dotsc,E_s$ are pairwise disjoint, because for $j=1,\dotsc,s$
the divisor  $\Exc(\sigma_{i_j})$ does not intersect
the loci of the previous birational maps.
\end{proof}
Here is a more detailed description of the case where
$s=\codim\N(D,X)-1$ in Lemma~\ref{pluto}. 
\begin{lemma}[Conic bundle case]\label{conicbdl}
Let $X$ be a Fano manifold and $D\subset X$ a prime
divisor. Consider a special Mori program for $-D$; we
 keep the same notation as in
Lemmas~\ref{general} and~\ref{pluto}.
Set $c:=\codim\N(D,X)$,
 $\sigma:=\sigma_{k-1}\circ\cdots\circ\sigma_0\colon X\dasharrow
X_k$, and $\psi:=\ph\circ\sigma\colon
X\dasharrow Y$. 
$$\xymatrix{
{X=X_0}\ar@/^1pc/@{-->}[rrrr]^{\sigma}\ar@/_1pc/@{-->}[drrrr]_{\psi}
\ar@{-->}[r]_{\,\quad \sigma_0}& 
{X_1}\ar@{-->}[r]&
{\cdots}\ar@{-->}[r] &
{X_{k-1}}\ar@{-->}[r]_{\sigma_{k-1}}& 
{X_k}\ar[d]^{\ph}\\
&&&&Y 
}$$
We assume that $Q_k\not\subset\N(D_k,X_k)$, equivalently that $s=c-1$
(see \ref{pluto}(2)). 
Then we have the following.
\begin{enumerate}[$(1)$]
\item Every fiber of $\ph$ has dimension $1$, $\dim Y=n-1$, and
$\ph$ is finite on $D_k$.
\item  Let
$j\in\{1,\dotsc,c-1\}$ and consider
  $\sigma_{i_j}(\Exc(\sigma_{i_j}))\subset X_{i_j+1}$. 
For every $m=i_j+1,\dotsc,k-1\,$ the set
 $\Lo(Q_m)\subset X_m$ is disjoint from the image of
  $\sigma_{i_j}(\Exc(\sigma_{i_j}))$ in $X_m$, so that the birational map
$X_{i_j+1}\dasharrow X_k$ is an isomorphism on
$\sigma_{i_j}(\Exc(\sigma_{i_j}))$, and $\sigma$ is regular on $E_j\subset
X$. 
\item
There exist open subsets $U\subseteq X$ and $V\subseteq Y$, with
$E_1,\dotsc,E_{c-1}\subset U$, such
that  $V$ and $\ph^{-1}(V)$ are smooth,
$\ph_{|\ph^{-1}(V)}\colon\ph^{-1}(V)\to V$ and  
$\psi\colon U\to V$ are conic bundles, and $\sigma_{|U}$ is the
blow-up of pairwise disjoint smooth subvarieties
$T_1,\dotsc,T_{c-1}\subset \ph^{-1}(V)$, of dimension $n-2$, with
exceptional divisors $E_1,\dotsc,E_{c-1}$. 
$$
\xymatrix{U \ar@/^1pc/[rr]^{\psi}\ar[r]_(.4){\sigma_{|U}}
  &{\ph^{-1}(V)}\ar[r]_(.6){\ph}& V}$$
In particular we have $\Lo(Q_m)\subseteq X_m\smallsetminus
(\sigma_{m-1}\circ\cdots \circ\sigma_0)(U)$
 for every $m\in\{0,\dotsc,k-1\}\smallsetminus
  \{i_1,\dotsc,i_{c-1}\}$.
\item
Set $Z_j:=\psi(E_j)\subset V$ for every $j\in\{1,\dotsc,c-1\}$.
Then  $Z_1,\dotsc,Z_{c-1}\subset Y$ are pairwise disjoint smooth prime divisors,
 and 
$\psi^*(Z_j)=E_j+\widehat{E}_j$, where   
$\widehat{E}_j\subset U$ is a smooth $\pr^1$-bundle with fiber
$\widehat{f}_j\subset\widehat{E}_j$, 
$f_j+ \widehat{f}_j$ is numerically equivalent to a general fiber of
$\psi$, and
$$\widehat{E}_j\cdot \widehat{f}_j=-1,\quad
E_j\cdot \widehat{f}_j=\widehat{E}_j\cdot {f}_j=1,\ \text{ and }\
[\widehat{f}_j]\not\in\N(E_j,X),$$
 for every $j\in\{1,\dotsc,c-1\}$.
In particular the divisors
$D,E_1,\dotsc,E_{c-1},\widehat{E}_1,\dotsc,\widehat{E}_{c-1}$ are all
distinct, and 
$E_1\cup\widehat{E}_1,\dotsc,E_{c-1}\cup\widehat{E}_{c-1}$ are pairwise disjoint.
\end{enumerate}
\end{lemma}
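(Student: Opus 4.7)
The plan is to establish (1) by a direct dimension argument using the hypothesis $Q_k\not\subset\N(D_k,X_k)$; to prove (2) by analyzing how the smooth codimension-$2$ centers $W_j:=\sigma_{i_j}(\Exc(\sigma_{i_j}))$ evolve under the remaining birational steps; and to derive (3) and (4) by combining (2) with the conic bundle structure of $\ph$ supplied by Ando's theorem on elementary fiber-type contractions with one-dimensional fibers.

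For (1), $D_k\cdot Q_k>0$ forces $D_k$ to meet every non-empty fiber of $\ph$. If a fiber $F$ had $\dim F\geq 2$, then $D_k\cap F$ would contain an irreducible curve $C$ contracted by $\ph$, giving $[C]\in\R_{>0}Q_k$ and hence $Q_k\subset\N(D_k,X_k)$, a contradiction. Thus every fiber of $\ph$ has dimension $\leq 1$; since $\ph$ is of fiber type this yields $\dim Y=n-1$, and the same argument applied to $\ph|_{D_k}$ excludes positive-dimensional fibers, so $\ph$ is finite on $D_k$.

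For (2), fix $j\in\{1,\ldots,c-1\}$ and let $W_j^{(m)}\subseteq X_m$ be the image of $W_j$ under $X_{i_j+1}\dasharrow X_m$. The claim is $W_j^{(m)}\cap\Lo(Q_m)=\emptyset$ for every $m\in\{i_j+1,\ldots,k-1\}$. The base case comes from Lemma~\ref{pluto}(1): $\Exc(\sigma_{i_j})\cap A_{i_j}=\emptyset$. For the inductive step, one must show that for each $m>i_j$ — whether $\sigma_m$ is a divisorial contraction or a flip — the locus $\Lo(Q_m)$ stays away from $W_j^{(m)}$. The key idea is to exploit the fact that $[f_j]\not\in\N(D,X)$ (Lemma~\ref{pluto}(3)) together with the anticanonical positivity $-K_{X_m}\cdot Q_m>0$ provided by the special Mori program: an intersection of $\Lo(Q_m)$ with $W_j^{(m)}$ would manufacture a numerical relation forcing $[f_j]$ into $\N(D_m,X_m)$, contradicting the behavior of $\N(D_m,X_m)$ along the program. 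This inductive argument, with the requisite bookkeeping distinguishing flips from divisorial steps, is the most delicate part of the proof.

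Once (2) is established, the birational map $X_{i_j+1}\dasharrow X_k$ is an isomorphism in a neighborhood of $W_j^{(i_j+1)}$, so $\sigma$ is regular on $E_j\subset X$ and $T_j:=\sigma(E_j)\subset X_k$ is a smooth codimension-$2$ subvariety, pairwise disjoint by Lemma~\ref{pluto}(4). For (3), Ando's theorem applied to $\ph$ — an elementary contraction of the smooth open $X_k\setminus A_k$, with one-dimensional fibers by (1) and $-K_{X_k}\cdot Q_k>0$ — gives a conic bundle structure $\ph^{-1}(V_0)\to V_0$ for an open $V_0\subseteq Y$. I would then take $V\subseteq V_0$ containing every $\ph(T_j)$ and put $U:=\sigma^{-1}(\ph^{-1}(V))\subset X$; (2) implies that $\sigma|_U$ factors as the composition of smooth blow-ups along $T_1,\ldots,T_{c-1}$, giving the blow-up description, and the final disjointness claim of (3) restates (2) at the level of images of $U$. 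For (4), each $Z_j:=\psi(E_j)=\ph(T_j)\subset V$ is a smooth divisor of dimension $n-2$, and $\widehat{E}_j$ is the strict transform of $\ph^{-1}(Z_j)$ under $\sigma|_U$, a smooth $\pr^1$-bundle over $Z_j$. The identities $\psi^*(Z_j)=E_j+\widehat{E}_j$, $\widehat{E}_j\cdot\widehat{f}_j=-1$, $E_j\cdot\widehat{f}_j=\widehat{E}_j\cdot f_j=1$, and $f_j+\widehat{f}_j$ numerically equivalent to a general $\psi$-fiber all follow from standard blow-up formulas and the conic bundle structure; the non-containment $[\widehat{f}_j]\not\in\N(E_j,X)$ follows from $\widehat{E}_j\cdot\widehat{f}_j=-1$ combined with a direct check, using the blow-up formula, that $\widehat{E}_j\cdot i_*[\alpha]\geq 0$ for every $[\alpha]\in\N(E_j)$.
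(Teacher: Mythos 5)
Your part (1) matches the paper's argument exactly, but the heart of the lemma is part (2), and there your proposal is a placeholder rather than a proof, and the mechanism you sketch does not work. You propose that $\Lo(Q_m)\cap W_j^{(m)}\neq\emptyset$ would ``force $[f_j]$ into $\N(D_m,X_m)$''; but for every $m\notin\{i_1,\dotsc,i_s\}$ the ray $Q_m$ is \emph{already} contained in $\N(D_m,X_m)$ by the definition of the indices $i_j$, so no contradiction of that shape is available, and in any case $f_j$ has no transform in $X_m$ for $m>i_j$ (it is contracted by $\sigma_{i_j}$), so it is unclear what numerical relation an intersection would produce. The paper's actual proof of (2) is entirely different: it builds the universal family of conics over (the closure of the image of) $Y\smallsetminus\ph(A_k)$ in $\Hilb(X)$, shows that the fiber $r$ of $\ph$ through any point of $T_j$ is integral with $-K_{X_k}\cdot r=2$ and transform of anticanonical degree $1$ in $X$, and then uses the degree inequalities of \cite[Lemma~3.8]{31} to force equality of $-K_{X_m}\cdot\widetilde r_m$ at every step $m\neq i_j$, which by that same lemma means $\widetilde r_m\cap\Lo(Q_m)=\emptyset$; since these fibers sweep out $T_j$, the disjointness in (2) follows. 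This analysis also delivers two facts you need later and do not establish: that $T_j$ is a connected component of $A_k$, and that $\ph^{-1}(\ph(T_j))\cap(A_k\smallsetminus T_j)=\emptyset$.

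Those two facts are exactly what makes your part (3) break down as written: the open set over which Ando/Andreatta--Wi\'sniewski gives a conic bundle for free is $V_0=Y\smallsetminus\ph(A_k)$, and since $T_j\subseteq A_k$ you have $\ph(T_j)\subseteq\ph(A_k)$, so there is no $V\subseteq V_0$ containing the $\ph(T_j)$. The correct choice is $V=Y\smallsetminus\ph\bigl(A_k\smallsetminus(T_1\cup\cdots\cup T_{c-1})\bigr)$, and proving that $\ph^{-1}(V)$ is smooth with one-dimensional fibers requires precisely the disjointness statements above. Finally, in (4) your argument for $[\widehat f_j]\notin\N(E_j,X)$ is not valid: $\N(E_j,X)$ is a linear subspace, so showing $\widehat E_j\cdot C\geq 0$ for curves $C\subseteq E_j$ (even granting it) cannot exclude a class of negative $\widehat E_j$-degree from the \emph{span}; indeed $[\widehat f_j]=[f]-[f_j]$ with $f$ a general fiber of $\psi$, so the issue is whether $[f]\in\N(E_j,X)$. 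The paper proves the non-membership from the standing hypothesis $Q_k\not\subset\N(D_k,X_k)$, via $[\sigma(\widehat f_j)]\in\N(T_j,X_k)\subseteq\N(D_k,X_k)$; this is one of the places where the assumption $s=c-1$ is genuinely used.
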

We refer the reader to \cite[p.~1478-1479]{fano} for an 
 explicit
description of the rational conic bundle $\psi$ in the toric case.
\begin{proof}[Proof of Lemma~\ref{conicbdl}]
Let $F\subset X_k$ be a fiber of $\ph$. Then $F\cap D_k\neq\emptyset$
because $D_k\cdot Q_k>0$; on the other hand $\dim(F\cap D_k)=0$,
because if there exists a curve $C\subset F\cap D_k$, then $[C]\in
Q_k$ and $[C]\in\N(D_k,X_k)$, thus $Q_k\subset\N(D_k,X_k)$ against
our assumptions.  Hence
every fiber of $\ph$ has dimension
$1$, $\dim Y=n-1$, and we have $(1)$. 

Recall from \ref{general}$(4)$ that $\Sing(X_k)\subseteq A_k$, and
notice that $\codim A_k\geq 2$, therefore
 $A_k$ cannot dominate $Y$. 
Restricting  $\ph$ we
get a contraction  
$X_k\smallsetminus\ph^{-1}(\ph(A_k))\to Y\smallsetminus
\ph(A_k)$  of a smooth variety, with $-K_{X_k}$ relatively
ample (because $-K_{X_k}\cdot Q_k>0$), and one-dimensional fibers.  
 We conclude that  $Y\smallsetminus
\ph(A_k)$ is smooth and that
$\ph_{|X_k\smallsetminus\ph^{-1}(\ph(A_k))}$ is a conic bundle (see
\cite[Theorem 4.1(2)]{AWaview}). 

By \ref{general}$(4)$, $\sigma\colon X\dasharrow X_k$ is an isomorphism over
$X_k\smallsetminus A_k$. If
$U_1:=\sigma^{-1}(X_k\smallsetminus\ph^{-1}(\ph(A_k)))$, then
$\psi\colon U_1\to Y\smallsetminus\ph(A_k)$
is again a conic bundle; in particular it is flat, and induces an
injective morphism $\iota\colon   Y\smallsetminus\ph(A_k)\to\Hilb(X)$.
Let $H\subset\Hilb(X)$ be the closure of the image of $\iota$, and
$\mathcal{C}\subset H\times X$ the restriction of the universal family over
$\Hilb(X)$. We get a diagram:
$$\xymatrix{{\mathcal{C}}\ar[d]_{\pi}\ar[r]^e&
X\ar@{-->}[r]^{\sigma}\ar@{-->}[dr]^{\psi} & {X_k}\ar[d]^{\ph}\\
H&&Y\ar@{-->}[ll]^{\iota} }$$
where 
$\pi\colon\mathcal{C}\to H$ and $e\colon\mathcal{C}\to X$ are the
projections, and $\iota$ is birational.  
We want to compare the degenerations 
in $X$ and in $X_k$ 
of the general fibers the conic bundle $\psi_{|U_1}$.

Fix $j\in\{1,\dotsc,c-1\}$, and recall from \ref{pluto}$(1)$ that
$\Exc(\sigma_{i_j})\cap A_{i_j}=\emptyset$, so that the birational map
$X\dasharrow X_{i_j}$ is an isomorphism over
$\Exc(\sigma_{i_j})$. In $X_{i_j+1}$ we have 
$$A_{i_j+1}=\sigma_{i_j}\left(\Exc(\sigma_{i_j})\cup A_{i_j}\right),$$
hence $\sigma_{i_j}(\Exc(\sigma_{i_j}))$ is a connected component of $A_{i_j+1}$.

Let $x\in \sigma_{i_j}(\Exc(\sigma_{i_j}))\subset X_{i_j+1}$ and let
$l\subseteq E_j\subset X$ be the transform of the fiber of
$\sigma_{i_j}$ over~$x$.

Let $B_0\subseteq H$ be a general irreducible curve
 which intersects $\pi(e^{-1}(l))$. Since $\pi$ is equidimensional
and the general fiber of $\pi$ over $B_0$ is $\pr^1$, the inverse
image $\pi^{-1}(B_0)\subseteq \mathcal{C}$ is irreducible. Set
$S:=e(\pi^{-1}(B_0))\subseteq X$, then $S\cap l\neq\emptyset$ by
construction.

Consider the normalizations
 $B\to B_0$ and $\mathcal{C}_B\to \pi^{-1}(B_0)$ of $B_0$ and
$\pi^{-1}(B_0)$ respectively;  
we have induced morphisms $e_B\colon \mathcal{C}_B\to S$ and 
$\pi_B\colon \mathcal{C}_B\to B$. 
$$\xymatrix{
{\mathcal{C}_B}\ar[d]^{\pi_B}\ar[r] &
{\pi^{-1}(B_0)\subseteq\mathcal{C}}\ar[d]^{\pi}\ar[r]^(.4){e} &
{X\supseteq S:=e(\pi^{-1}(B_0))}\\
B\ar[r]&{B_0\subseteq H}&
}$$
Because $B_0$ is general,
 $B_0\cap\dom(\iota^{-1}) 
\neq\emptyset$, and $\iota^{-1}$ induces a morphism
$\eta\colon B\to Y$. Set 
$B_1:=\eta(B)\subset Y$.

Again, since $\ph$ is equidimensional and the general fiber of $\ph$
over $B_1$ is $\pr^1$, the inverse
image $\ph^{-1}(B_1)\subset X_k$ is irreducible; call $S_k$ this
surface, which is just the 
transform of $S\subset X$ under $\sigma$. 

Recall that $\ph$ is finite on $D_k$ by (1), and $A_k\subset D_k$ by 
\ref{general}$(4)$,
hence no component of a 
fiber of $\ph$ can be
contained in $A_k$. On the other hand, by the generality of $B_0$, the
general fiber of $\ph_{|S_k}$ does not intersect $A_k$.
 Therefore $S_k$ can intersect $A_k$ at most in
a finite number of points.

 Consider now $\sigma_{S}:=\sigma_{|S}\colon
S\dasharrow S_k$. Then $\sigma_S$ is an isomorphism over
$S_k\smallsetminus (S_k\cap A_k)$ and $\dim (S_k\cap A_k)\leq 0$, 
hence by Zariski's main theorem
$\xi:=\sigma_S\circ e_B\colon \mathcal{C}_B\to S_k$ is a
morphism.
$$\xymatrix{{\mathcal{C}_B}\ar[d]_{\pi_B}\ar[r]_(0.4){e_B}\ar@/^1pc/[rr]^{\xi} 
& {S\subset X}\ar@{-->}[r]_{\sigma_S}&{S_k\subset X_k}\ar[d]^{\ph} \\
B\ar[rr]^{\eta}&&{B_1\subset Y}}$$

Let $y\in B$ be such that
$C:=e_B(\pi_B^{-1}(y))\subset S$
intersects $l$; in particular $C\cap E_j\neq\emptyset$, because
$l\subseteq E_j$. Since $C$ is numerically equivalent in $X$ to a
general fiber of $\psi$, we have $-K_X\cdot C=2$ and $E_j\cdot C=0$;
in particular $C$
has at most two irreducible components, because $-K_X$ is ample.

Set $r:=\ph^{-1}(\eta(y))$. Since $r$ is numerically equivalent 
in $X_k$ to a
general fiber of $\ph$, we have $-K_{X_k}\cdot r=2$. 
Recall that no irreducible component
of $r$ can be contained in $A_k$; on the other hand, 
$r$ must intersect $A_k$, otherwise $\sigma_S$ would be an isomorphism
over $r$, $C=\sigma_S^{-1}(r)$, and $C\cap E_j=\emptyset$, a
contradiction.  

Let us show that $r$ is an integral fiber of $\ph$.
Indeed 
let  $C_1$ be an irreducible component of $r$. If $C_1\cap
A_k=\emptyset$, then $C_1$ is contained in the smooth locus of $X_k$
and $-K_{X_k}\cdot C_1\geq 1$. If instead $C_1\cap
A_k\neq \emptyset$, then \cite[Lemma~3.8]{31} gives $-K_{X_k}\cdot
C_1> 1$. Since $-K_{X_k}\cdot r=2$ and
$r$ must intersect $A_k$, it must
be irreducible and reduced.

For every $i\in\{0,\dotsc,k-1\}$ let
$\widetilde{r}_i\subset X_i$ be 
the transform of $r\subset X_k$ (where $X_0=X$). Again by
\cite[Lemma~3.8]{31} we get $-K_X\cdot \widetilde{r}_0<-K_{X_k}\cdot
r=2$, hence $-K_X\cdot \widetilde{r}_0=1$. 

Notice that  $\xi(\pi_B^{-1}(y))\subset S_k$ is contained in $r$; on
the other hand $\xi$ cannot contract to a point a fiber of $\pi_B$,
hence $\xi(\pi_B^{-1}(y))=r$. Then 
 $\widetilde{r}_0\subseteq C$, because $C=e_B(\pi_B^{-1}(y))$, and we get
 $C=\widetilde{r}_0\cup
C'$, where $C'\subset X$ is an irreducible curve (and possibly
$C'=\widetilde{r}_0$ if $C$ is non-reduced). 

Since $r\not\subset A_k$, we have $\widetilde{r}_0\not\subset E_j$; in
particular $E_j\cdot\widetilde{r}_0\geq 0$. 
If  $E_j\cdot\widetilde{r}_0
=0$, then also $E_j\cdot C'=0$ and $C\subset E_j$, which is
impossible. Hence 
 $E_j\cdot\widetilde{r}_0>0$, and since $E_j\cdot C=0$, we have
$E_j\cdot C'<0$ and $C'\neq\widetilde{r}_0$.

Consider now the blow-up $\sigma_{i_j}\colon X_{i_j}\to X_{i_j+1}$. We
have $\Exc(\sigma_{i_j})\cdot
\widetilde{r}_{i_j}=E_j\cdot\widetilde{r}_0
\geq 1$, hence  using the projection formula
we get $-K_{X_{i_j+1}}\cdot \widetilde{r}_{i_j+1}\geq
-K_{X_{i_j}}\cdot \widetilde{r}_{i_j}+1$. On the other hand 
\cite[Lemma~3.8]{31} gives
$$1=-K_X\cdot \widetilde{r}_0 \leq -K_{X_{i_j}}\cdot
\widetilde{r}_{i_j}\quad\text{and}\quad
-K_{X_{i_j+1}}\cdot \widetilde{r}_{i_j+1}\leq
-K_{X_k}\cdot r=2.$$
We conclude that $\Exc(\sigma_{i_j})\cdot
\widetilde{r}_{i_j}=1$, 
$-K_X\cdot \widetilde{r}_0 =-K_{X_{i_j}}\cdot
\widetilde{r}_{i_j}$, and $-K_{X_{i_j+1}}\cdot \widetilde{r}_{i_j+1}=
-K_{X_k}\cdot r$, and again by \cite[Lemma~3.8]{31}
this implies that:
\stepcounter{thm}
\begin{equation}\label{auto}
 \text{for every $m\in\{0,\dotsc,k-1\},m\neq i_j$,
$\Lo(Q_m)$ is disjoint from $\widetilde{r}_m$.} 
\end{equation}

We show that $C'=l$ (recall that $l\subset X$ is the transform of 
 $\sigma_{i_j}^{-1}(x)\subset X_{i_j}$).
 Since $C'$ intersects $\widetilde{r}_0$ (because
 $C=\widetilde{r}_0\cup C'$ is connected), and
$\widetilde{r}_0\cap\Lo(Q_0)=\emptyset$ by \eqref{auto}, we see that $C'$ is not
contained in $\Lo(Q_0)$. Iterating this reasoning 
for every $\sigma_m$ with
$m\in\{0,\dotsc,i_j-1\}$, we see that $C'$ intersects the open subset
 where the birational map
$X\dasharrow X_{i_j}$ is an isomorphism; let $\widetilde{C}'\subset X_{i_j}$
be its transform. 

If $\sigma_{i_j}(\widetilde{C}')$ were a curve, then by the same
reasoning it could
not be contained in $\Lo(Q_m)$ for any $m=i_j+1,\dotsc,k-1$, and
in the end we would get a curve $\widetilde{C}'_{k}\subset X_k$, 
distinct from
$r$, which should belong to $\xi(\pi_B^{-1}(y))$, which is impossible.
Thus $\widetilde{C}'$ must be a fiber of $\sigma_{i_j}$. On the
other hand $\Exc(\sigma_{i_j})\cdot
\widetilde{r}_{i_j}=1$, thus $\widetilde{r}_{i_j}$ intersects a unique
fiber of  $\sigma_{i_j}$, and $C'=l$.

In particular this yields that $x\in\widetilde{r}_{i_j+1}\cap
\sigma_{i_j}(\Exc(\sigma_{i_j}))$. Since $x\in
\sigma_{i_j}(\Exc(\sigma_{i_j}))$
 was arbitrary, \eqref{auto} implies statement $(2)$.

\medskip

Let $T_j\subset X_k$ be the image of $\sigma_{i_j}(\Exc(\sigma_{i_j}))\subset
  X_{i_j+1}$. By $(2)$ the birational map
  $X_{i_j+1}\dasharrow X_k$ yields an isomorphism between
  $\sigma_{i_j}(\Exc(\sigma_{i_j}))$ and $T_j$, hence $T_j$ is smooth
  of dimension $n-2$, and is contained in the smooth locus of $X_k$.
 Since
  $\sigma_{i_j}(\Exc(\sigma_{i_j}))$ is a connected component of
  $A_{i_j+1}$, we deduce that $T_j$ is a connected component of $A_k$, and
  $A_k\smallsetminus T_j$ is closed in $X_k$.

By \eqref{auto} the birational map 
 $X_{i_j+1}\dasharrow X_k$ yields also an isomorphism between
$\widetilde{r}_{i_j+1}$ and $r$, and  $r\cap (A_k\smallsetminus
T_j)=\emptyset$. 

Consider the point $x'\in T_j$ corresponding to  $x\in
\sigma_{i_j}(\Exc(\sigma_{i_j}))$. 
Then $x'\in r\cap T_j$ because $x\in\widetilde{r}_{i_j+1}$, \emph{i.e.}\
 $r$ is the fiber of $\ph$ through $x'\in T_j$. Again since  $x$
 was arbitrary in
$\sigma_{i_j}(\Exc(\sigma_{i_j}))$, from  $r\cap (A_k\smallsetminus
T_j)=\emptyset$ we deduce that
 $\ph^{-1}(\ph(T_j))\cap (A_k\smallsetminus T_j)=\emptyset$, and hence
that $\ph(T_j)\cap\ph(A_k\smallsetminus T_j)=\emptyset$ in $Y$.

\medskip

Summing up, we have shown that $T_1,\dotsc,T_{c-1}$ are connected
components of $A_k$ (so that $A_k\smallsetminus (T_1\cup\cdots\cup
T_{c-1})$ is closed in $X_k$), and
 the images
  $\ph(T_1),\dotsc,\ph(T_{c-1}),\ph(A_k\smallsetminus(T_1\cup\cdots 
\cup T_{c-1}))$ are 
pairwise disjoint in $Y$.

Now set
\stepcounter{thm}
\begin{equation}\label{V}
V:=Y\smallsetminus \ph(A_k\smallsetminus(T_1\cup\cdots
\cup T_{c-1})).\end{equation}
Then $V$ is open in $Y$,
$\ph^{-1}(V)\subseteq\sigma(\dom(\sigma))$, 
and
$T_1\cup\cdots
\cup T_{c-1}\subset\ph^{-1}(V)$. Set
 $U:=\sigma^{-1}(\ph^{-1}(V))\subseteq X$. By definition,
$\ph^{-1}(V)\cap (A_k\smallsetminus(T_1\cup\cdots
\cup T_{c-1}))=\emptyset$; this means that
for every $m\in\{0,\dotsc,k-1\}\smallsetminus
  \{i_1,\dotsc,i_{c-1}\}$, $\Lo(Q_m)$ is disjoint from the image of
  $U$ in $X_m$.

We have $E_1,\dotsc,E_{c-1}\subset
U$, because $E_j=\sigma^{-1}(T_j)$, and
 $\psi\colon U\to V$ is regular and proper. More precisely, every
fiber of $\psi$ over  $V$ is one-dimensional, and as before 
\cite[Theorem 4.1(2)]{AWaview} shows that this is
 a conic bundle and that $V$ is smooth. We have a factorization 
$$
\xymatrix{U \ar@/^1pc/[rr]^{\psi}\ar[r]_(.4){\sigma_{|U}}
  &{\ph^{-1}(V)}\ar[r]_(.6){\ph}& V}$$
and $\sigma_{|U}$ is just the blow-up of $T_1\cup\cdots 
\cup T_{c-1}$, so we get $(3)$. For every $j\in\{1,\dotsc,c-1\}$ we have
$Z_j=\psi(E_j)=\ph(T_j)$, so $Z_1,\dotsc,Z_{c-1}$ are pairwise disjoint.
Now let $\widehat{E}_j\subset U$ be the transform of
$\ph^{-1}(Z_j)$. Then $\psi^{-1}(Z_j)=E_j\cup
\widehat{E}_j$, and the rest of statement $(4)$ 
follows from standard arguments on
conic bundles.
Just notice that if for some $j\in\{1,\dotsc,c-1\}$
we have $[\widehat{f}_j]\in\N(E_j,X)$, then
$[\sigma(\widehat{f}_j)]\in\N(T_j,X_k)\subseteq\N(A_k,X_k)\subseteq
\N(D_k,X_k)$, which is impossible because $\sigma(\widehat{f}_j)$ is a
fiber of $\ph$ and $\NE(\ph)\not\subset\N(D_k,X_k)$ by assumption.
\end{proof}
\begin{proof}[Proof of  Theorem \ref{corindex}]
If $\N(D,X)=\N(X)$ for every prime 
divisor $D\subset X$, then we have $(ii)$  
(just notice that
if $D_1,D_2\subset X$ are two disjoint divisors, then
$\N(D_1,X)\subseteq D_2^{\perp}\subsetneq\N(X)$, see Remark \ref{elem}).

Suppose now that there exists a prime divisor $D\subset X$ with
$\codim\N(D,X)>0$, and consider a special Mori program for $-D$ (which
exists by Proposition \ref{tomm}).
Let $E_1,\dotsc,E_s\subset X$ be the $\pr^1$-bundles determined by the
Mori program.

If $s\geq 1$, by \ref{pluto}$(3)$ we have $-K_X\cdot f_1=1$, where
 $f_1\subset E_1$ is a fiber of the $\pr^1$-bundle; this is impossible
because $\iota_X>1$. 

 Therefore $s=0$, and \ref{pluto}$(2)$ yields 
that $\codim\N(D,X)=1$ and $Q_k\not\subset\N(D_k,X_k)$, so that
 Lemma~\ref{conicbdl} applies.

We show that $k=0$ and $X=X_k$. Indeed if
not, we have $A_k\neq\emptyset$ in $X_k$ (see \ref{general}$(4)$). 
Take $r$ a fiber of $\ph$
intersecting $A_k$. Then, using \cite[Lemma~3.8]{31} 
as in the proof of Lemma~\ref{conicbdl}, we
see that $r$ is integral, and that the transform
$\widetilde{r}\subset X$ of $r$ has anticanonical degree $1$ in $X$, 
 a contradiction.

Thus $X=X_k$ and we get a conic bundle
$\ph\colon X\to Y$, which is finite on $D$. Since  $X$ contains no
curves of anticanonical degree $1$, 
$\ph$ must be a smooth fibration in $\pr^1$. 
Then $Y$ is Fano by \cite[Proposition 4.3]{wisn}, and
finally we have $\iota_Y\geq \iota_X=2$ by
\cite[Lemme~2.5]{mukai}.
\end{proof}
\section{Divisors with minimal Picard number}\label{Divisors}
Let $X$ be a Fano manifold, and consider 
$$c_X:=\max\{\codim\N(D,X)\,|\,D\text{ is a prime divisor in $X$}\}.$$
We always have $0\leq c_X\leq \rho_X-1$.
If $S$ is a Del Pezzo surface, then $c_S=\rho_S-1\in\{0,\dotsc,8\}$.
\begin{example}\label{max}
Consider a Fano manifold $X=S\times T$, where $S$ is a Del Pezzo
surface. Then $c_X=\max\{\rho_S-1,c_T\}$. More precisely, for any
prime divisor $D\subset X$, we have three possibilities:
\begin{enumerate}[$\bullet$]
\item $D=C\times T$ where $C\subset S$ is a curve,
and $\codim\N(D,X)=\rho_S-1$;
\item
$D=S\times D_T$ where $D_T\subset T$ is a divisor,
and $\codim\N(D,X)=\codim\N(D_T,T)\leq c_T$;
\item
$D$ dominates both $S$ and $T$ under the projections, and
$\codim\N(D,X)\leq\rho_S-1$. 
\end{enumerate}
Indeed suppose that
 $D\subset X$ is a
prime divisor with $\codim\N(D,X)>\rho_{S}-1$. Then
$\dim\N(D,X)<\rho_{T}+1$, so that $D$ cannot dominate $T$ under
the projection, and $D=S\times D_T$.
\end{example}
\begin{example}
If $X$ is a Fano manifold with pseudo-index $\iota_X\geq 3$ (for
instance $X=\pr^{n_1}\times\cdots\times\pr^{n_r}$ with $n_i\geq 2$ for
all $i=1,\dotsc,r$), then
$c_X=0$ by Theorem \ref{corindex}.
\end{example}
We are going to use the results of section~\ref{run} to
prove the following. 
\begin{thm}\label{gen}
For any Fano manifold $X$ we have $c_X\leq 8$. Moreover:
\begin{enumerate}[$\bullet$]
\item if $c_X\geq 4$ then 
 $X\cong S\times T$ where $S$ is a Del Pezzo surface, $\rho_S=c_X+1$,
 and $c_T\leq c_X$;
\item if $c_X=3$ then there exists a flat, quasi-elementary
contraction $X\to T$ where $T$ is an $(n-2)$-dimensional Fano
manifold, $\rho_X-\rho_T=4$, and $c_T\leq 3$.
\end{enumerate}
\end{thm}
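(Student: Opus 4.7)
The plan is to exploit Lemma~\ref{pluto} twice in succession: once to produce a prime divisor $E_0$ of codimension $c_X$ with a rigid $\pr^1$-bundle structure, and a second time with $E_0$ in place of $D$ to assemble the configuration of disjoint $\pr^1$-bundles that forces the product decomposition.

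\emph{Step 1: producing a distinguished divisor $E_0$.} Assume $c_X \geq 3$; otherwise the bound is trivial. Start with any prime divisor $D \subset X$ realizing $\codim \N(D,X) = c_X$, choose a special Mori program for $-D$ by Proposition~\ref{tomm}, and apply Lemma~\ref{pluto} to obtain pairwise disjoint smooth $\pr^1$-bundles $E_1, \dotsc, E_s \subset X$ with $s \geq c_X - 1 \geq 2$, each satisfying $E_j \cdot f_j = -1$, $E_j \cap D \neq \emptyset$, and $[f_j] \notin \N(D, X)$. Using the pairwise disjointness (so $E_i \cdot \gamma = 0$ for every $\gamma \in \N(E_j, X)$ whenever $i \neq j$) together with the $\pr^1$-bundle structure of each $E_j$, I would argue that either some $E_j$ already has $\codim \N(E_j, X) = c_X$, or that iterating the construction (replacing $D$ by a suitable $E_j$ and rerunning) terminates after finitely many steps in a prime divisor $E_0$ with $\codim \N(E_0, X) = c_X$ that is itself a smooth $\pr^1$-bundle with $E_0 \cdot f_0 = -1$.

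\emph{Step 2: second application.} Running a special Mori program for $-E_0$ and reapplying Lemma~\ref{pluto} yields pairwise disjoint smooth $\pr^1$-bundles $\widetilde{E}_1, \dotsc, \widetilde{E}_{s'} \subset X$ with $s' \in \{c_X - 1, c_X\}$, each satisfying $\widetilde{E}_i \cdot \widetilde{f}_i = -1$, each meeting $E_0$, and $[\widetilde{f}_i] \notin \N(E_0, X)$. Adjoining $E_0$ itself, we get $s' + 1$ divisors of the same rigid type. The pairwise intersection numbers $\widetilde{E}_i \cdot \widetilde{f}_j = 0$ for $i \neq j$ combined with $E_0 \cdot \widetilde{f}_i > 0$ then force the classes $[f_0], [\widetilde{f}_1], \dotsc, [\widetilde{f}_{s'}]$ to be linearly independent in $\N(X)$.

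\emph{Step 3: the product or fibration conclusion.} In the non-conic-bundle case $s' = c_X$, the plan is to show that this family of disjoint $\pr^1$-bundles is the preimage of a full $(-1)$-curve configuration on a Del Pezzo surface: the classes $[f_0], [\widetilde{f}_i]$ generate extremal rays of $X$ spanning a $(c_X + 1)$-dimensional face of $\overline{\NE}(X)$, whose contraction $X \to T$ has general fiber $S$ a Del Pezzo surface of Picard number $c_X + 1$, with $E_0 \cap S, \widetilde{E}_i \cap S$ giving its $c_X + 1$ disjoint $(-1)$-curves. A rigidity argument --- that a family of Del Pezzo surfaces of Picard number $\geq 5$ endowed with pairwise disjoint sections parametrizing its $(-1)$-curves must be trivial --- yields $X \cong S \times T$, hence $\rho_S = c_X + 1 \leq 9$ and the bound $c_X \leq 8$. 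In the conic-bundle case $s' = c_X - 1$, which (as one verifies from Step~1) is forced to occur only when $c_X = 3$, the rational conic bundle $\psi \colon X \dashrightarrow Y$ of Lemma~\ref{conicbdl} combined with the $\pr^1$-bundle structure $E_0 \to B_0$ produces the desired flat quasi-elementary morphism $X \to T$ onto a Fano $(n-2)$-fold with $\rho_X - \rho_T = 4$. The main obstacle is this Step~3: converting the purely numerical data of $c_X + 1$ disjoint $(-1)$-curve-like $\pr^1$-bundles into a globally regular contraction $X \to T$, and then verifying triviality of the resulting smooth Del Pezzo fibration. This relies on Fano-theoretic positivity and the rigidity of Del Pezzo surfaces of Picard number $\geq 5$ together with their $(-1)$-curve configurations, and requires careful accounting to rule out spurious extremal rays.
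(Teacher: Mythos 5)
Your overall skeleton matches the paper's (run a special Mori program for $-D$ to get disjoint $\pr^1$-bundles, replace $D$ by one of them, and convert the resulting configuration into a product or a Del Pezzo fibration), but the proposal has gaps exactly at the points where the paper does its real work. The most serious one is in Step 2: after rerunning the program for $-E_0$, you record that $E_0\cdot\widetilde f_i>0$, but everything downstream depends on the \emph{converse} intersection $\widetilde E_i\cdot f_0>0$, i.e.\ on the $\widetilde E_i$ meeting $E_0$ horizontally with respect to its $\pr^1$-bundle. By Remark~\ref{intersection} these intersections are either all zero or all positive, and the zero case genuinely occurs; the paper needs a separate induction (Lemma~\ref{due}, via the ordered sets of rays with properties (P1)--(P3)) to show that $R_0$ and the Mori program can be \emph{chosen} so that the positive case holds. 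Only then does Lemma~\ref{tre} give $E_i\cong\pr^1\times F$, which is the engine of the whole product argument. Your proposal never addresses this dichotomy, and without it the configuration of disjoint $\pr^1$-bundles carries no product structure.

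Step 3 also does not work as stated. A Del Pezzo surface of Picard number $\rho_S$ cannot contain $\rho_S$ pairwise disjoint $(-1)$-curves (their classes would be an orthonormal basis for $-($intersection form$)$ and contracting them would leave Picard number $0$), and indeed in the actual geometry $E_0$ meets every $\widetilde E_i$, so the configuration you describe is not the one that arises; moreover ``rigidity of Del Pezzo fibrations of Picard number $\geq 5$ with disjoint sections'' is not a result you can invoke, since such surfaces have moduli. The paper instead builds the surface factor explicitly as the image of the contraction of the nef divisor $2E_0+\sum E_i$, shows the induced map $X\to S\times Y$ is finite, and proves it is an isomorphism via the relative canonical divisor and simple connectedness (Remark~\ref{prod}) --- crucially using that each $E_i$ is a product. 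Two further claims are wrong or missing: the conic-bundle case $s'=c_X-1$ is \emph{not} confined to $c_X=3$ (it occurs for $c_X\geq 4$ and still yields a product, via the extra divisors $\widehat E_i$ and Lemma~\ref{Vequivalence}); and for $c_X=3$ the construction of the flat quasi-elementary contraction requires showing the Mori program has exactly two divisorial contractions and no flips, and that the resulting conic bundle is smooth (so that $T$ is Fano) --- an argument occupying all of \ref{324}--\ref{331} which your one-sentence summary does not supply.
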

\noindent A {contraction} $\ph$
is \emph{quasi-elementary}
 if $\ker\ph_*$ is generated by the numerical classes of the curves 
 contained in a
 general fiber of $\ph$; we refer the reader to
 \cite{fanos} for properties of quasi-elementary contractions.
In particular, in the case where $c_X=3$ in Theorem \ref{gen},
 the general fiber of the
contraction $X\to T$ is a Del Pezzo surface $S$ with $\rho_S\geq 4$.
\begin{example}[Codimension $3$]\label{cod3}
Let $n\geq 3$ and $Z=\pr_{\pr^{n-2}}(\mathcal{O}^{\oplus
  2}\oplus\mathcal{O}(1))$. Then $Z$ is a toric Fano manifold with
$\rho_Z=2$, and the $\pr^2$-bundle $Z\to \pr^{n-2}$ has three pairwise
disjoint sections $T_1,T_2,T_3\subset Z$ which are closed under the
  torus action. Let $X\to Z$ be 
the blow-up of $T_1,T_2,T_3$. Then $X$ is Fano with $\rho_X=5$, and it
has a smooth morphism $X\to\pr^{n-2}$ such that every fiber is the Del Pezzo
surface $S$ with $\rho_S=4$. If
$E\subset X$ is one of the exceptional divisors of the blow-up, one
easily checks that $\rho_X-\rho_E=\codim\N(E,X)=3$, hence $c_X\geq 3$.
However $X$ is not a product, thus
$c_X= 3$ by Theorem \ref{gen}. 
\end{example}
\begin{parg}
 \label{primooutline}
The proof of Theorem \ref{gen} will take all the rest
of section~3; we will proceed in several steps.
Section \ref{prel} gathers some preliminary remarks and lemmas.
In section~\ref{geq4} we treat the case $c_X\geq 4$, and we show
that $X\cong S\times T$, where $S$ is a Del Pezzo
surface with $\rho_S=c_X+1$, and $T$ a Fano manifold with $c_T\leq
c_X$ (see  
Proposition \ref{primameta}, and \ref{secondooutline} for an outline
of its proof).  
In particular this implies that $c_X\leq 8$,
because $\rho_S\leq 9$. 

The case $c_X=3$ is more delicate, as we have to treat separately the two
following  cases:\refstepcounter{thm}\label{caso}
\begin{enumerate}[(\thethm.a)]
\item  for every prime divisor $D\subset
X$ with $\codim\N(D,X)=3$, and for every
special Mori program for $-D$,
 we have $\N(D_k,X_k)=\N(X_k)$ (notation as in
Lemma~\ref{general});
\item there exist a prime divisor $D\subset
X$ with $\codim\N(D,X)=3$, and a
special Mori program for $-D$, such that $\N(D_k,X_k)\subsetneq\N(X_k)$.
\end{enumerate}
The first case 
(\ref{caso}.a) is treated together with the case $c_X\geq 4$, in
section~\ref{geq4}. In the end we reach a contradiction, hence \emph{a
  posteriori} we conclude that (\ref{caso}.a)  never happens (see
Corollary \ref{asilo}).
 The
second  case (\ref{caso}.b)
is treated in section~\ref{sectiontre}, where we show the existence
of a  flat, quasi-elementary
contraction $X\to T$, where $T$ is an $(n-2)$-dimensional Fano
manifold, $\rho_X-\rho_T=4$, and $c_T\leq 3$ (see Proposition \ref{secondameta},
 and \ref{terzooutline} for an outline of its proof).
\end{parg}
\subsection{Preliminary results}\label{prel}
In this section we collect some remarks and lemmas which will be used
in the proof of Theorem \ref{gen}.
\begin{remark2}\label{divisors}
Let $X$ be a projective manifold, $\ph\colon X\to Y$ a contraction such that 
$-K_X$ is $\ph$-ample and $\dim Y>0$, and $D$ a divisor in $X$ such that
$\ker\ph_*\subseteq D^{\perp}$. Then we have the following:
\begin{enumerate}[$(1)$]
\item $\dim Y=1+\dim \ph(\Supp D)$ and $D=\ph^*(D_Y)$, $D_Y$ a Cartier
divisor in $Y$;
\item
if $D$ is a prime divisor, then $\ph(D)$ is a prime Cartier divisor,
and $D=\ph^*(\ph(D))$;
\item if $D$ is a smooth prime divisor,  let
$\ph(D)^{\nu}\to
\ph(D)$ be the normalization. Then the morphism
$\ph_D\colon D\to \ph(D)^{\nu}$ induced by $\ph_{|D}$ is a contraction, and
$-K_D$ is $\ph_D$-ample;
\item
if $D$ is a smooth prime divisor and $Y$ is smooth, then $\ph(D)$ is a
smooth prime divisor.  
\end{enumerate}
\end{remark2}
\begin{proof}
By \cite[Theorem 3.7(4)]{kollarmori} there exists a Cartier divisor $D_Y$
on $Y$ such that $D=\ph^*(D_Y)$. Then $\Supp D_Y=\ph(\Supp D)$, so we
have $(1)$.

If $D$ is a prime divisor, then $D_Y$ is a prime divisor supported on $\ph(D)$,
namely $D_Y=\ph(D)$, and we have $(2)$.
 
For $(3)$, $\ph_D$ is surjective with connected fibers onto a normal
projective variety, 
hence a contraction.
Let $i\colon D\hookrightarrow X$ be the inclusion and take
$\gamma\in\overline{\NE}(D)\cap\ker(\ph_{D})_*$ with $\gamma\neq 0$. 
The restriction $(-K_X)_{|D}$  is $\ph_D$-ample, hence 
$(-K_X)_{|D}\cdot\gamma>0$. Moreover
$i_*(\gamma)\in\ker \ph_*$, so that
$$-K_D\cdot\gamma=-(K_X+D)\cdot i_*(\gamma)=-K_X\cdot
i_*(\gamma)>0,$$
and $-K_D$ is $\ph_{D}$-ample.

For $(4)$, let $y\in \ph(D)$ and let
$f\in\mathcal{O}_{Y,y}$ be a local equation for $\ph(D)$. Then
$\ph^*(f)$ is a local equation for $D$ near the fiber over $y$.
Since $D$ is smooth, the differential $d_x(\ph^*(f))$ is
non-zero, where $x\in \ph^{-1}(y)$. Then $d_yf$ is
non-zero, hence $\ph(D)$ is smooth at $y$.
\end{proof}
\begin{remark2}\label{elem}
Let $X$ be a projective manifold, $Z\subset X$ a closed subset,
 and $D\subset X$ a prime divisor.
If $Z\cap D=\emptyset$, then $D\cdot C=0$ for every curve $C\subset
Z$, hence $\N(Z,X)\subseteq D^{\perp}$.
\end{remark2}
\begin{remark2}\label{elem2}
Let $X$ be a projective manifold,
$E\subset X$ a smooth prime divisor which is a $\pr^1$-bundle with
fiber $f\subset E$, and $D\subset X$ a prime divisor with $D\cdot
f>0$. Then the following holds:
\begin{enumerate}[$(1)$]
\item $\dim\N(D\cap E,X)\geq\dim\N(E,X)-1\,$ and
  $\,\N(E,X)=\R[f]+\N(D\cap E,X)$;
\item
either $[f]\in\N(D\cap E,X)$ and $\N(D\cap E,X)=\N(E,X)$, 
or $[f]\not\in\N(D\cap E,X)$ and $\N(D\cap E,X)$ has codimension
  $1$ in $\N(E,X)$; 
\item
for every irreducible curve $C\subset E$ we have
$C\equiv \lambda
f + \mu C'$,
 where $C'$ is an irreducible 
curve contained in $D\cap E$, $\lambda,\mu\in\R$, and
$\mu\geq 0$.
\end{enumerate}
\end{remark2}
\begin{proof}
Let $\pi\colon E\to F$ be the $\pr^1$-bundle structure on $E$,
and consider the push-forward $\pi_*\colon \N(E)\to\N(F)$. This is a
surjective linear map with kernel $\R[f]_E$.

Since
$D\cdot f>0$, we have $\pi(D\cap E)=F$, thus $\pi_*(\N(D\cap
E,E))=\N(F)$. Therefore $\N(E)=\R[f]_E+\N(D\cap E,E)$, and applying $i_*$
(where $i\colon E\hookrightarrow X$ is the inclusion) we get $(1)$ and
$(2)$. Statement $(3)$ follows from
\cite[Lemma 3.2 and Remark 3.3]{occhetta}. 
\end{proof}  
\begin{remark2}\label{extremal}
Let $X$ be a Fano manifold and $D,E\subset X$ prime divisors with
$$\N(D\cap E,X)\subseteq E^{\perp}.$$ 
Suppose that $E$ is a smooth $\pr^1$-bundle with fiber $f\subset E$,
such that $E\cdot f=-1$ and $D\cdot f>0$. 

Then the half-line $\R_{\geq 0}[f]\subset\NE(X)$ is 
an extremal ray of type 
  $(n-1,n-2)^{sm}$, with contraction $\ph\colon X\to Y$ where
$E=\Exc(\ph)$ and
  $Y$ is Fano. 
\end{remark2}
\begin{proof}
Notice first of all that
 $(-K_X+E)\cdot f=0$. 

Let $C\subset X$ be an irreducible curve.
If $C\not\subset E$, then  $(-K_X+E)\cdot
C >0$. If $C\subseteq D\cap E$, then $E\cdot C=0$, and again  $(-K_X+E)\cdot
C >0$. 

Assume now that $C\subseteq E$.
By \ref{elem2}$(3)$ 
 we have $C\equiv \lambda
f + \mu C'$, where $C'$ is a curve contained in $D\cap E$, $\lambda,\mu\in\R$,
 and $\mu\geq 0$. Thus
$$(-K_X+E)\cdot C=\mu(-K_X+E)\cdot C'\geq 0,$$ 
and  $(-K_X+E)\cdot C=0$ if and only if
$\mu=0$, if and only if $[C]\in \R_{\geq 0}[f]$. Therefore
$-K_X+E$ is nef, and $(-K_X+E)^{\perp}\cap\NE(X)=\R_{\geq 0}[f]$ is an
extremal ray.

Let $\ph\colon X\to Y$ be the contraction of $\R_{\geq 0}[f]$; clearly
$\Exc(\ph)=E$. Since $(-K_X+E)\cdot 
C >0$ for every curve $C\subset D\cap E$,
$\ph$ is finite 
of $D\cap E$. Thus if $F\subset E$ is a fiber of $\ph$, then $F\cap
D\neq \emptyset$ (because $D\cdot \NE(\ph)>0$), and $\dim (F\cap
D)=0$. This yields that $\dim F=1$, and by \cite[Theorem 2.3]{ando}
$\R_{\geq
  0}[f]$ is of 
type $(n-1,n-2)^{sm}$ and $Y$ is smooth. 

Finally  
 $-K_X+E=\ph^*(-K_{Y})$, thus $-K_{Y}$ is ample and
$Y$ is Fano (notice that $\NE(Y)$ is closed, 
because $\NE(Y)=\ph_*(\NE(X))$).
\end{proof}
\begin{lemma2}\label{blah}
Let $X$ be a Fano manifold and $D,E\subset X$ prime divisors with
$$\N(D\cap E,X)=\N(E,X)\cap
D^{\perp}\subseteq E^{\perp}.$$ 
Suppose that $E$ is a smooth $\pr^1$-bundle with fiber $f\subset E$,
such that $E\cdot f=-1$ and $D\cdot f>0$. 

Then $E\cong \pr^1\times F$ where $F$ is a Fano manifold, and $D\cap
E=\{pts\}\times F$.
Moreover the half-line $\R_{\geq 0}[f]$ is an extremal ray of type
$(n-1,n-2)^{sm}$, it is the unique extremal ray having negative
intersection with $E$, and the target of its contraction is Fano. 
\end{lemma2}
\begin{proof}
Consider the divisor  $D_{|E}$ in $E$. We have $\Supp (D_{|E})=D\cap
E$, and if $C\subseteq D\cap E$ is an irreducible curve,  
then $[C]\in\N(D\cap E,X)\subseteq D^{\perp}$, so that
$D_{|E}\cdot C=D\cdot C=0$.
Therefore $D_{|E}$ is nef.

Let $i\colon E\hookrightarrow X$ be the inclusion and take 
$\gamma\in\overline{\NE}(E)\cap
(D_{|E})^{\perp}$ with $\gamma\neq 0$. Then $i_*(\gamma)\in
\N(E,X)\cap D^{\perp}\subseteq E^{\perp}$, hence:
$$-K_{E}\cdot\gamma=-(K_X+E)\cdot i_*(\gamma)=-K_X\cdot
i_*(\gamma)=(-K_X)_{|E}\cdot\gamma>0.$$
By the contraction theorem,
 there exists a contraction $g\colon E\to Z$ 
such that $-K_{E}$ is $g$-ample and $\NE(g)=\overline{\NE}(E)\cap
(D_{|E})^{\perp}$ (see \cite[Theorem 3.7(3)]{kollarmori}).
Notice that  $D_{|E}\cdot f=D\cdot f>0$, hence $g$ does not
contract the fibers of 
the  $\pr^1$-bundle on $E$, and $\dim Z\geq 1$. 
 On the other hand
$g$
sends $D\cap E$ to a union of points, so that $\dim
Z=1$ by \ref{divisors}$(1)$. More precisely, since $g(f)=Z$, we get 
$Z\cong\pr^1$. The general fiber $F$ of $g$ is a Fano manifold of
dimension $n-2$, because  $-K_{E}$ is $g$-ample. 

By \cite[Lemma~4.9]{31} 
we conclude that
$E\cong \pr^1\times F$ and $g$ is the projection onto $\pr^1$. 
Since $D\cdot f>0$, $D\cap E$ dominates $F$ under the projection, and
is sent by $g$ to a union of points;
therefore $D\cap E=\{pts\}\times F$.
  
Using
Remark \ref{extremal}, we see that $\R_{\geq 0}[f]$ is an extremal ray of type
$(n-1,n-2)^{sm}$,  
and the target of its contraction is Fano. 

 Finally 
 let $R$ be an
 extremal ray of $X$ with $E\cdot R<0$. Then $R\subseteq
 \NE(E,X)\subseteq\NE(X)$,
 thus $R$ must be a one-dimensional face
 of $\NE(E,X)$.\footnote{Since $F$ and $E$ are Fano, 
  the cones $\NE(F)$, $\NE(E)$, $\NE(E,X)$, etc.\  
are closed and polyhedral.}  Since $E\cong\pr^1\times F$, we have
 $\NE(E)=\R_{\geq 0}[f]_E+\NE(\{pt\}\times F,E)$ and
 $\NE(E,X)= \R_{\geq 0}[f]+
\NE(\{pt\}\times F,X)$. On the other hand
 $\NE(\{pt\}\times F,X)\subset\N(\{pt\}\times F,X)=\N(D\cap
E,X)\subseteq E^{\perp}$,  
therefore $R= \R_{\geq 0}[f]$.
\end{proof}
\begin{remark2}\label{intersection}
Let $X$ be a projective manifold and $E_0\subset X$ a smooth prime
divisor which is a $\pr^1$-bundle with fiber $f_0\subset E_0$.  Let
$E_1,\dotsc,E_s\subset X$ be pairwise disjoint prime divisors such
that $E_0\neq E_i$ and
$E_0\cap E_i\neq\emptyset$ for every $i=1,\dotsc,s$. Then either
$E_1\cdot f_0=\cdots=E_s\cdot f_0=0$, or $E_i\cdot f_0> 0$ for $i=1,\dotsc,s$.
\end{remark2}
\begin{proof}
 For every $i=1,\dotsc,s$  we have $E_i\cdot f_0\geq 0$, because
 $E_0\neq E_i$.

Suppose that there exists $j\in\{1,\dotsc,s\}$ such that
$E_j\cdot f_0=0$. Since $E_0\cap E_j\neq\emptyset$, this
implies that $E_j$ contains a fiber $\overline{f}_0$ of the
$\pr^1$-bundle structure on $E_0$. If $i\in\{1,\dotsc,s\}$, $i\neq j$, we have
 $E_i\cap E_j=\emptyset$, in particular  $E_i\cap
\overline{f}_0=\emptyset$ and hence $E_i\cdot f_0=0$. 
\end{proof}
\begin{lemma2}\label{meno}
Let $X$ be a Fano manifold and $D\subset X$ a prime divisor with
$\codim\N(D,X)=c_X$. Let $E_1,\dotsc,E_s\subset X$ be
pairwise disjoint prime divisors such that:
$$D\cap E_i\neq\emptyset,\ \ D\neq E_i,\ \text{ and }\ \codim\N(D\cap
E_i,X)\leq c_X+1,\ \text{
for every }\ i=1,\dotsc,s.$$
If $s\geq 2$, then $\codim\N(D\cap
E_i,X)= c_X+1$ for every $i=1,\dotsc,s$, and
$$\N(D\cap E_i,X)=\N(D,X)\cap E_j^{\perp}\ \text{ for every }\ i\neq
j.$$
If $s\geq 3$, then there exists a linear subspace $L\subset\N(X)$, of
codimension $c_X+1$, such that
$L=\N(D\cap E_i,X)=\N(D,X)\cap E_i^{\perp}$
 for every $i=1,\dotsc,s$.
\end{lemma2}
\begin{proof}
Assume that $s\geq 2$, and let
 $i,j\in\{1,\dotsc,s\}$ with $i\neq j$. 
Since $E_i\cap E_j=\emptyset$, we have $\N(D\cap E_i,X)\subseteq
E_j^{\perp}$ by Remark \ref{elem}.  On the other hand,
since $D\cap E_j\neq\emptyset$ and $D\neq E_j$, 
there exists some curve $C\subset D$ with
$E_j\cdot C>0$, so that $\N(D,X)\not\subseteq E_j^{\perp}$. 
Therefore we get:
$$
\N(D\cap E_i,X)\subseteq \N(D,X)\cap E_j^{\perp}\subsetneq
\N(D,X),$$
hence $\rho_X-c_X-1\leq\dim\N(D\cap E_i,X)\leq\dim \N(D,X)\cap
E_j^{\perp}=\dim\N(D,X)-1= \rho_X-c_X-1$, and this yields the statement.

Assume now that $s\geq 3$, and set 
$L:=\N(D\cap E_1,X)$; the first part 
 already gives that
$\codim L=c_X+1$ and that $L=\N(D,X)\cap E_i^{\perp}$ for every $i=2,\dotsc,s$.
If $i,j\in\{2,\dotsc,s\}$ are distinct, 
again  by the first part we get
\begin{center}
$L=\N(D,X)\cap E_i^{\perp}=\N(D\cap E_j,X)=\N(D,X)\cap
E_1^{\perp}.$\end{center}
\end{proof}
\begin{lemma2}\label{zero}
Let $X$ be a Fano manifold and $D\subset X$ a prime divisor with
$\codim\N(D,X)=c_X$. 
Let $E_1,\dotsc,E_s\subset X$ be
pairwise disjoint smooth prime divisors, and suppose that
 $E_i$ is a $\pr^1$-bundle with  fiber  $f_i\subset E_i$, such that 
$E_i\cdot
f_i=-1$ and $D\cdot f_i>0$, for
every $i=1,\dotsc,s$.

Assume that $s\geq 2$.
Then 
 $\codim\N(E_i,X)=c_X$ and $\codim\N(D\cap E_i,X)=c_X+1$ for every
$i=1,\dotsc,s$; moreover
$\N(D\cap E_i,X)=\N(D,X)\cap E_j^{\perp}$ for every $i\neq
j$.
\end{lemma2}
\begin{proof}
Let $i\in\{1,\dotsc,s\}$.
We have $D\cap
E_i\neq\emptyset$ and $D\neq E_i$ because $D\cdot f_i>0$ and $E_i\cdot
f_i=-1$.
 Since $D\cdot
f_i>0$, by \ref{elem2}$(1)$ and by the definition of $c_X$
we have 
\renewcommand{\theequation}{\thethmdue}
\stepcounter{thmdue}
\begin{equation}\label{treno}
\codim\N(D\cap E_i,X)\leq
\codim\N(E_i,X)+1\leq c_X+1.
\end{equation} 
Therefore Lemma~\ref{meno} yields that  $\N(D\cap
E_i,X)=\N(D,X)\cap E_j^{\perp}$ if $i\neq j$, and $\codim\N(D\cap
E_i,X)=c_X+1$. By \eqref{treno} we get $\codim\N(E_i,X)=c_X$.
\end{proof}
\begin{lemma2}\label{ultimissimo}
Let $X$ be a Fano manifold and 
$D\subset X$ a prime divisor with
$\codim\N(D,X)=c_X$. Let
$E_1,\dotsc,E_s,\widehat{E}_1,\dotsc,\widehat{E}_s\subset X$ be prime
divisors such that $E_i$ and $\widehat{E}_i$ are
smooth $\pr^1$-bundles, with  fibers respectively  $f_i\subset E_i$
and $\widehat{f}_i\subset \widehat{E}_i$, and moreover: 
$$E_i\cdot
f_i=\widehat{E}_i\cdot
\widehat{f}_i=-1,\quad D\cdot f_i>0,\quad E_i\cdot
\widehat{f}_i>0,\quad \widehat{E}_i\cdot f_i>0,\quad 
[\widehat{f}_i]\not\in\N(E_i,X),$$
and no fiber $\widehat{f}_i$ is contained in $D$,
for
every $i=1,\dotsc,s$. We
assume also that
$E_1\cup\widehat{E}_1,\dotsc,E_s\cup\widehat{E}_s$ are pairwise
disjoint, and that $s\geq 2$. 

Then $\codim\N(E_i,X)=\codim\N(\widehat{E}_i,X)=c_X$ and
 $[f_i]\not\in\N(\widehat{E}_i,X)$ for every $i=1,\dotsc,s$.
\end{lemma2}
\begin{proof}
Lemma~\ref{zero} (applied to $D$ and $E_1,\dotsc,E_s$)
shows that $\codim\N(E_i,X)=c_X$ for every $i=1,\dotsc,s$.

Fix $i\in\{1,\dotsc,s\}$.
Since $\N(E_i\cap
\widehat{E}_i,X)\subseteq\N(E_i,X)$, we have
$[\widehat{f}_i]\not\in\N(E_i\cap 
\widehat{E}_i,X)$. Because
 ${E}_i\cdot \widehat{f}_i>0$, \ref{elem2}$(2)$
yields that
$\N(E_i\cap
\widehat{E}_i,X)$ has codimension $1$ in $\N(\widehat{E}_i,X)$. Recall
that by the definition of $c_X$ we have $\codim\N(\widehat{E}_i,X)\leq
c_X$, so that $\codim \N(E_i\cap 
\widehat{E}_i,X)\leq c_X+1$. 

Let us show that 
\renewcommand{\theequation}{\thethmdue}\stepcounter{thmdue}
\begin{equation}\label{nesima}
\codim\N(E_i\cap  \widehat{E}_i,X)=c_X+1\ \text{ 
and }\ \codim\N(\widehat{E}_i,X)=c_X.\end{equation} 
If
 $D\cap \widehat{E}_i=\emptyset$, then $\N(E_i\cap
\widehat{E}_i,X)\subseteq\N(E_i,X)\cap
D^{\perp}$ (see Remark \ref{elem}); on the other hand 
$\N(E_i,X)\cap
D^{\perp}\subsetneq\N(E_i,X)$, because $D\cdot f_i>0$. 
This yields  $\codim\N(E_i\cap
\widehat{E}_i,X)=c_X+1$. 

If instead
$D\cap \widehat{E}_i\neq\emptyset$, then $D\cdot\widehat{f}_i>0$,
because $D$ cannot contain any curve $\widehat{f}_i$.  
Thus we can apply Lemma~\ref{zero}
to the divisors $D$ and
$E_1,\dotsc,E_{i-1},\widehat{E}_i,E_{i+1},\dotsc,E_{s}$, and we deduce that
$\codim\N(\widehat{E}_i,X)=c_X$. Hence we have \eqref{nesima}.
 
Since $\widehat{E}_i\cdot f_i>0$ and
$\codim\N(E_i,X)=c_X=\codim\N(E_i\cap  \widehat{E}_i,X)-1$, 
 again by \ref{elem2}$(2)$ we get
 $[f_i]\not\in \N(E_i\cap
\widehat{E}_i,X)$. For dimensional reasons $\N(E_i\cap
\widehat{E}_i,X)=\N(E_i,X)\cap\N( \widehat{E}_i,X)$, and we conclude
that $[f_i]\not\in\N( \widehat{E}_i,X)$.
\end{proof}
\subsection{The case where $X$ is a product}\label{geq4}
 The main results of this section are the following. 
\begin{proposition2}\label{primameta}
Let $X$ be a Fano manifold  such that either $c_X\geq 4$, or $c_X=3$ and 
$X$ satisfies (\ref{caso}.a). 

Then  $X\cong S\times T$, where
$S$ is a Del Pezzo surface with  $\rho_S=c_X+1$, and $c_T\leq c_X$. In
particular, $c_X\leq 8$.
\end{proposition2}
\begin{corollary2}\label{asilo}
Let $X$ be a Fano manifold with $c_X=3$. Then $X$ satisfies (\ref{caso}.b).
\end{corollary2}
\begin{proof}[Proof of Corollary \ref{asilo}]
 By contradiction, suppose that $X$ satisfies
(\ref{caso}.a). Then by Proposition \ref{primameta}
we have $X\cong S\times T$ and $\rho_S=4$, \emph{i.e.}\ $S$ is the
blow-up of $\pr^2$ in three non-collinear points. Consider the sequence:
$$X\cong S\times T\longrightarrow S_1\times T\longrightarrow \mathbb{F}_1\times T
\longrightarrow 
\pr^1\times T,$$
where 
$S_1$ is the blow-up of $\pr^2$ in two distinct points. 
Let $C\subset\mathbb{F}_1$ be the section of the $\pr^1$-bundle
containing the two points blown-up under $S\to\mathbb{F}_1$. Let
moreover $\widetilde{C}\subset S$ be its transform, 
and $D:=\widetilde{C}\times
T\subset X$. Then $\codim\N(D,X)=3$, and the sequence above is a special
Mori program for $-D$. 
The image of $D$ in $\mathbb{F}_1\times T$ is 
$C\times T$, and $\N(C\times T,\mathbb{F}_1\times T)\subsetneq
\N(\mathbb{F}_1\times T)$.
Thus we have a contradiction with (\ref{caso}.a).
\end{proof}
\begin{parg2}{\bf Outline of the proof of  Proposition
    \ref{primameta}.} \label{secondooutline} 
There are three preparatory steps, and then the actual proof.

The first step is to apply the construction of section~\ref{run} to a
prime divisor $D\subset X$ with $\codim\N(D,X)=c_X$. We consider a
special Mori program for $-D$, and this determines pairwise
disjoint $\pr^1$-bundles $E_1,\dotsc,E_s\subset X$ as in
Lemma~\ref{pluto}; we denote by $f_i\subset E_i$ a fiber. 
The crucial property here is that $s\geq 3$: indeed
 $s\geq \codim\N(D,X)-1=c_X-1$, so that $s\geq 3$ if $c_X\geq
4$. On the other hand if 
$c_X=3$ we have $s=3$ by (\ref{caso}.a).
  Then for $i=1,\dotsc,s$ we show that $\codim\N(E_i,X)=c_X$ and that
 $\R_{\geq 0}[f_i]$ is an extremal ray of type $(n-1,n-2)^{sm}$, such that
the target of its contraction is again Fano. This is Lemma~\ref{a}. 

\medskip

In particular, this shows that $X$ has at least one extremal ray
 $R_0$ of type $(n-1,n-2)^{sm}$ such that if
$E_0:=\Lo(R_0)$, then $\codim\N(E_0,X)=c_X$, and the target of the
contraction of $R_0$ is Fano. 

Now
 we replace $D$ by $E_0$, and
apply again the same construction.
 Let $p\colon E_0\to F$ be the
$\pr^1$-bundle structure. 
Since $E_1,\dotsc,E_s$ are pairwise disjoint, either $E_0\cap E_i$ is
a union of fibers of $p$ for every $i=1,\dotsc,s$, or  $p(E_0\cap
E_i)=F$  
for every $i=1,\dotsc,s$. 
The second preparatory step is to show that if  $E_1,\dotsc,E_s$
  intersect $E_0$ horizontally with respect to the
 $\pr^1$-bundle (\emph{i.e.}\  $p(E_0\cap
E_i)=F$), 
 the divisors
$E_0,\dotsc,E_s$ have very special properties; in particular,  for
every $i=0,\dotsc,s$,
$E_i\cong
\pr^1\times F$ where $F$ is an $(n-2)$-dimensional Fano manifold. This
is  Lemma~\ref{tre}.  

\medskip

The third preparatory step is
 show  that we can always choose the extremal ray $R_0$, and the
 special Mori program for $-E_0$, in such a way that $E_1,\dotsc,E_s$
 actually intersect $E_0$ horizontally with respect to the
 $\pr^1$-bundle, so that the previous result applies. This is
 Lemma~\ref{due}. 

\medskip

Then we are ready for the proof of Proposition \ref{primameta}.
We use the  the properties given by Lemma~\ref{tre} to show that
$E_1,\dotsc,E_s$ are the exceptional divisors of the 
blow-up $\sigma\colon X\to X_s$ of a Fano manifold $X_s$ in $s$ smooth
codimension $2$ subvarieties. Moreover
there is an elementary contraction of
fiber type $\ph\colon X_s\to Y$ such that if
$\psi:=\ph\circ\sigma\colon X\to Y$,
then $\psi(E_0)=Y$, and $\psi$ is finite on $\{pt\}\times F\subset
E_0$ (recall that $E_0\cong\pr^1\times F$). 
We have then two possibilities: either
$\psi$ is not finite on $E_0$ and $\dim Y=n-2$, or $\psi$ is finite on
$E_0$ and $\dim Y=n-1$.

We first consider the case where $\psi$  is not finite on
$E_0$, in \ref{316}. 
We use the divisors $E_0,\dotsc,E_s$ to define a contraction
$X\to S$ onto a surface, such that the induced morphism $\pi\colon 
X\to S\times
Y$ is finite. Finally we  show that in fact $\pi$ is an isomorphism;
here the key property is that $E_0,\dotsc,E_s$ are
products.

Then we consider in \ref{second}
the case where $\psi$ is finite on $E_0$. In this
situation $Y$ is smooth, and both $\psi$ and $\ph$ are conic
bundles. If $T_1,\dotsc,T_s\subset X_s$ are the subvarieties blown-up
by $\sigma$, the transforms $\widehat{E}_1,\dotsc,\widehat{E}_s\subset
X$ of $\ph^{-1}(\ph(T_i))$ are smooth $\pr^1$-bundles. 

Similarly to what previously done for $E_0,\dotsc,E_s$, we show that
$\widehat{E}_i\cong\pr^1\times F$ for 
every $i=1,\dotsc,s$. 

Since $\psi(E_0)=Y$, $Y$ is covered by the family of rational curves 
$\psi(\pr^1\times\{pt\})$. We use a result from \cite{unsplit} to show
that in fact these 
rational curves are the fibers of a smooth morphism $Y\to
Y'$, where $\dim Y'=n-2$.

In this way we get a contraction $X\to Y'$, and we proceed similarly
to the previous case: we use the divisors
$E_0,E_1,\dotsc,E_s,\widehat{E}_1,\dotsc,\widehat{E}_s$ to define a
contraction $X\to S$ onto a surface, and show that the induced
morphism $X\to S\times Y'$ is an isomorphism. 
\end{parg2}
Let us start with the first preparatory result.
\begin{lemma2}\label{a}
Let $X$ be a Fano manifold  such that either $c_X\geq 4$, or $c_X=3$ and 
$X$ satisfies (\ref{caso}.a). 

Let $D\subset X$ be a prime divisor 
with $\codim\N(D,X)=c_X$, consider a special Mori program for
$-D$, and
let $E_1,\dotsc, E_s \subset X$ be the $\pr^1$-bundles determined by
 the Mori program. For $i=1,\dotsc,s$ let
$f_i\subset E_i$ be a fiber of the $\pr^1$-bundle, and
 set $R_i:=\R_{\geq 0}[f_i]$. Then we have the following:
\begin{enumerate}[$(1)$]
\item $s\in\{c_X-1,c_X\}$ and $s\geq 3$;
\item $R_i$ is an extremal ray of type
$(n-1,n-2)^{sm}$,
 the target of the contraction of $R_i$ is Fano, and
$\codim\N(E_i,X)=c_X$, for every
  $i=1,\dotsc,s$;
\item there exists a linear subspace $L\subset\N(X)$, of codimension
  $c_X+1$, such that
$$L=\N(D\cap E_i,X)=\N(D,X)\cap E_i^{\perp}=\N(E_i,X)\cap
  E_i^{\perp}\ \text{ for every }i=1,\dotsc,s.$$ 
\end{enumerate}
\end{lemma2}
\noindent We will call $R_1,\dotsc,R_s$ {\bf the extremal rays
  determined by the special Mori program for $-D$} that we are
  considering. Notice that differently from the case of the
$\pr^1$-bundles $E_1,\dotsc,E_s$, 
the extremal rays $R_1,\dotsc,R_s$ are defined only
when $X$ satisfies the assumptions of Lemma~\ref{a}, and $D\subset X$
is a prime divisor with $\codim\N(D,X)=c_X$.
\begin{proof}
We know by Lemma~\ref{pluto} that:
 $E_i\cdot
f_i=-1$ and $D\cdot f_i>0$ for $i=1,\dotsc,s$, 
$E_1,\dotsc,E_s$ are
pairwise disjoint, and
 $s\in\{c_X-1,c_X\}$
 because $\codim\N(D,X)=c_X$. Moreover, if $c_X=3$, then $s=3$ by
 (\ref{caso}.a), so that in any case $s\geq 3$, and we get $(1)$.

Therefore, by Lemma~\ref{zero}, we have $\codim\N(E_i,X)=c_X$ and
$\codim\N(D\cap E_i,X)=c_X+1$ for every
$i=1,\dotsc,s$. In particular, 
 Lemma~\ref{meno} applies; let $L\subset\N(X)$ be the
linear subspace such that $\codim L=c_X+1$ and
$L=\N(D\cap E_i,X)=\N(D,X)\cap
E_i^{\perp}$ for every 
 $i=1,\dotsc,s$. 

Fix $i\in\{1,\dotsc,s\}$.
Since  $E_i\cdot f_i=-1$, we have $\N(E_i,X)\not\subseteq
E_i^{\perp}$, therefore
$\dim\N(E_i,X)\cap E_i^{\perp}=\dim\N(E_i,X)-1=\rho_X-c_X-1=\dim
L$. On the other hand  
we have $L\subseteq E_i^{\perp}$ and 
$L=\N(D\cap E_i,X)$, in particular $L\subseteq\N(E_i,X)$. Thus
$L\subseteq\N(E_i,X)\cap E_i^{\perp}$, so the two subspaces must
coincide, and
we get $(3)$.

Finally, $(2)$ follows from
 Remark \ref{extremal} applied to $D$ and $E_i$.
\end{proof}
\begin{lemma2}\label{expl}
Let $X$ be a Fano manifold  such that either $c_X\geq 4$, or $c_X=3$ and 
$X$ satisfies (\ref{caso}.a). 

 Let $D\subset X$ be a prime divisor 
with $\codim\N(D,X)=c_X$, and $R$ an extremal ray of type
$(n-1,n-2)^{sm}$ such that $D\cdot R>0$, $R\not\subset\N(D,X)$, and
the target of the contraction of $R$ is Fano.  

Set $E:=\Lo(R)$. Then $\N(D\cap E,X)=\N(D,X)\cap E^{\perp}=\N(E,X)\cap E^{\perp}$.
\end{lemma2}
\begin{proof}
Consider the contraction $\ph\colon X\to Y$ of $R$, so that by the
assumptions $Y$ is a Fano manifold, and 
consider the prime divisor
$\ph(D)\subset Y$.

By Proposition \ref{tomm}, there exists 
 a special Mori program for $-\ph(D)$ in $Y$. Together with $\ph$,
 this gives a special Mori program for $-D$ in $X$, where the first
 extremal ray is precisely $Q_0=R$:
$$
X\stackrel{\ph}{\longrightarrow}Y=Y_0
\stackrel{\sigma_{0}}{\dasharrow} 
Y_1 \dasharrow\quad\cdots
\quad
\dasharrow Y_{k-1}\stackrel{\sigma_{k-1}}{\dasharrow} Y_k.
$$
We apply Lemmas \ref{pluto} and \ref{a}; 
since $R\not\subset\N(D,X)$, $E$ is one of the $\pr^1$-bundles
determined by this special Mori program for $-D$.
 Thus the statement
follows from \ref{a}(3).
\end{proof}
\begin{remark2}\label{inizio}
Let $X$ be a Fano manifold  such that either $c_X\geq 4$, or $c_X=3$ and 
$X$ satisfies (\ref{caso}.a). 
 Recall from Proposition \ref{tomm} that
there exists a special Mori program for any divisor in $X$.

The first consequence of
Lemma~\ref{a} (applied to any prime divisor $D\subset X$ with
$\codim\N(D,X)=c_X$) is that $X$ has an
extremal ray $R_0$ of type $(n-1,n-2)^{sm}$ such that if
$E_0:=\Lo(R_0)$, then $\codim\N(E_0,X)=c_X$, and the target of the
contraction of $R_0$ is Fano. 

In particular, we can consider a special Mori program for $-E_0$, and
apply again Lemma \ref{a}.
Let $R_1,\dotsc,R_s$ be the extremal rays determined by the
Mori program,
with loci $E_1,\dotsc,E_s$.
Since, by \ref{pluto}$(3)$ and \ref{pluto}$(4)$,
 $E_1,\dotsc,E_s$ are pairwise disjoint and $E_0\neq E_i$, $E_0\cap
E_i\neq\emptyset$ for $i=1,\dotsc,s$, by
Remark \ref{intersection}
we have two
possibilities: either
 $E_1\cdot R_0=\cdots=E_s\cdot
R_0=0$, or
 $E_i\cdot
R_0>0$ for every $i=1,\dotsc,s$.
\end{remark2}
In the next Lemma we are going to show that in the second case
(\emph{i.e.}\ when $E_1\cdot R_0>0$) the extremal rays $R_0,\dotsc,R_s$ have  
very special properties, in particular that the divisors
$E_0,\dotsc,E_s$ are products.
\begin{lemma2}\label{tre}
Let $X$ be a Fano manifold  such that either $c_X\geq 4$, or $c_X=3$ and 
$X$ satisfies (\ref{caso}.a). 

Let $R_0$ be an extremal ray of $X$, 
of type $(n-1,n-2)^{sm}$, such that
the target of the contraction of $R_0$ is Fano, and
$\codim\N(E_0,X)=c_X$, where
 $E_0:=\Lo(R_0)$.

Consider a special Mori program for $-E_0$, let
$R_1,\dotsc,R_{s}$ be the extremal rays determined by the Mori program, and
set $E_i:=\Lo(R_i)$ for $i=1,\dotsc,s$.

Assume that $E_1\cdot R_0>0$. Then we have the following:
\begin{enumerate}[$(1)$]
\item  $\codim\N(E_i,X)=c_X$, 
and $E_i\cong \pr^1\times F$ with $F$ 
an $(n-2)$-dimensional Fano
manifold, for $i=0,\dotsc,s$.
We set $F_i:=\{pt\}\times F\subset E_i$;
\item $R_i$ is the unique
extremal ray of $X$ having negative intersection with $E_i$, and
the target of the contraction of $R_i$ is Fano, for every $i=0,\dotsc,s$;
\item $E_1,\dotsc,E_s$ are pairwise disjoint, and $E_0\cap
  E_i=\{pts\}\times F$ for every $i=1,\dotsc,s$;
\item $E_i\cdot R_0>0$ and $E_0\cdot R_i>0$ 
%and  $R_i\not\subset\N(E_0,X)$, 
for every $i=1,\dotsc,s$; 
\item 
there exists a linear subspace $L\subset\N(X)$, of codimension
  $c_X+1$, such that
$$L=\N(E_0\cap E_i,X)=\N(F_j,X)\ \text{ 
and }\ \N(E_j,X)= \R R_j\oplus L$$
for every $i=1,\dotsc,s$ and
  $j=0,\dotsc,s$, and moreover $\dim(\R(R_0+\cdots+R_s)+ L)=s+1+\dim L$;
\item $L\subseteq E_0^{\perp}\cap\cdots\cap E_s^{\perp}$, and equality
  holds 
if $s=c_X$.
\end{enumerate} 
\end{lemma2}
\begin{proof}
By \ref{pluto}(3) and \ref{pluto}(4) we know that
 $E_0\cdot R_i>0$ (in particular $E_0\neq E_i$ and $E_0\cap
E_i\neq\emptyset$) and $R_i\not\subset\N(E_0,X)$
for $i=1,\dotsc,s$, and that
$E_1,\dotsc,E_s$ are pairwise disjoint.

Secondly, Lemma~\ref{a} shows that 
 $s\in\{c_X-1,c_X\}$ and $s\geq 3$, that
$\codim\N(E_i,X)=c_X$ for $i=1,\dotsc,s$, and that
 there exists a linear subspace $L\subset\N(X)$, of codimension
  $c_X+1$, such that
\renewcommand{\theequation}{\thethmdue}\stepcounter{thmdue}
\begin{equation}\label{utile}
L=\N(E_0\cap E_i,X)=\N(E_0,X)\cap E_i^{\perp}=\N(E_i,X)\cap
  E_i^{\perp}\end{equation}
for every $i=1,\dotsc,s.$ Moreover Remark \ref{intersection}
yields $E_i\cdot R_0>0$ for every $i=1,\dotsc,s$, because $E_1\cdot
R_0>0$, so we get $(4)$.

 Fix $i\in\{1,\dotsc,s\}$.
We have $\dim \N(E_0\cap E_i,X)=\dim L=\rho_X-c_X-1<\rho_X-c_X=
\dim \N(E_0,X)$, and
since $E_i\cdot R_0>0$,
\ref{elem2}$(2)$ gives
$R_0\not\subset\N(E_0\cap E_i,X)$. Moreover
$$\N(E_0\cap
E_i,X)\subseteq\N(E_0,X)\cap\N(E_i,X)\subsetneq\N(E_0,X)$$ 
(because $R_i\not\subset\N(E_0,X)$),
and since 
$\N(E_0\cap
E_i,X)$ has codimension $1$ in $\N(E_0,X)$, we deduce that
$\N(E_0\cap
E_i,X)=\N(E_0,X)\cap\N(E_i,X)$. This yields that
$R_0\not\subset\N(E_i,X)$.

Now we can apply Lemma~\ref{expl} to $E_i$ and $R_0$, and deduce that 
\renewcommand{\theequation}{\thethmdue}\stepcounter{thmdue}
\begin{equation}\label{utile2}
L=\N(E_0\cap E_i,X)=\N(E_i,X)\cap E_0^{\perp}.\end{equation} 

Thanks to (4), \eqref{utile}, and \eqref{utile2},
we can use Lemma~\ref{blah} to show $(1)$.
First of all we apply 
Lemma~\ref{blah} with $D=E_i$ and $E=E_0$, and we deduce that
$E_0\cong\pr^1\times F$ where $F$ is an $(n-2)$-dimensional Fano
manifold, and $E_0\cap E_i=\{pts\}\times F\subset
E_0$. Moreover we get $(2)$ for $R_0$.

 Then we apply Lemma~\ref{blah} again, with
$D=E_0$ and $E=E_i$, and we get $E_i\cong\pr^1\times F^i$ and $E_0\cap
E_i=\{pts\}\times F^i\subset E_i$; in particular, $F^i=F$, and we have
$(3)$. Moreover we 
get $(2)$ for $R_i$.

We have $L\subseteq E_0^{\perp}\cap\cdots\cap E_s^{\perp}$ by
\eqref{utile} and \eqref{utile2}.
To get $(5)$, it is enough to show that
$[f_0],\dotsc,[f_s]\in\N(X)$ are linearly independent and that
$\R([f_0]+\cdots+[f_s])\cap L=\{0\}$.  
So suppose that 
there exist $\lambda_0,\dotsc,\lambda_s\in\R$ such that
$$\sum_{i=0}^s\lambda_i f_i\in L.$$
Intersecting with $E_j$ for $j\in\{1,\dotsc,s\}$ we get
$\lambda_j=\lambda_0 E_j\cdot f_0$, and intersecting with $E_0$ we get
$\lambda_0(\sum_{i=1}^s(E_i\cdot f_0)(E_0\cdot f_i)-1)= 0$. Since
$E_i\cdot f_0$ and $E_0\cdot f_i$ are positive integers by (4), and
$s\geq 3$,
 we get $\lambda_0=0$  and
 hence $\lambda_i=0$ for $i=1,\dotsc,s$, and we are done.

We are left to show $(6)$. Similarly to what we have done for
$[f_0],\dotsc,[f_s]$, one checks that 
 $[E_0],\dotsc,[E_{s}]$ are
linearly independent in $\mathcal{N}^1(X)$, so that $\codim (E_0^{\perp}\cap\cdots
\cap E_{s}^{\perp})=s+1$. Since
$L\subseteq E_0^{\perp}\cap\cdots
\cap E_{s}^{\perp}$ and $\codim L=c_X+1$, if $s=c_X$ the two subspaces coincide.
\end{proof}
\begin{lemma2}\label{due}
Let $X$ be a Fano manifold  such that either $c_X\geq 4$, or $c_X=3$ and 
$X$ satisfies (\ref{caso}.a). 
Then $X$ has an extremal ray $R_0$ with the following properties:
\begin{enumerate}[$\bullet$]
\item $R_0$ is of type $(n-1,n-2)^{sm}$,
the target of the contraction of $R_0$ is Fano, and
$\codim\N(E_0,X)=c_X$, where
 $E_0:=\Lo(R_0)$;
\item there exists a special Mori program for $-E_0$ such that,
if $R_1,\dotsc,R_{s}$ are the  extremal rays
determined by the Mori program,
 we have
have $\Lo(R_i)\cdot R_0>0$ for every $i=1,\dotsc,{s}$.
\end{enumerate}
\end{lemma2}
\begin{proof}
Let $\mathcal{S}=\{S^1,\dotsc,S^h\}$ be 
an ordered set of 
extremal rays of $X$, and set $E^i:=\Lo(S^i)$.
Consider the following properties:
\begin{enumerate}[(P1)]
\item $S^i$ is  of type $(n-1,n-2)^{sm}$, the target of the
  contraction of $S^i$ is Fano, and 
$\codim \N(E^i,X)=c_X$, for every $i=1,\dotsc,h$;
\item
$E^{i-1}\cdot S^i>0$ and $S^{i}\not\subset\N(E^{i-1},X)$, for every
$i=2,\dotsc,h$;
\item for every $1\leq
j<i\leq h$ we have
$E^i\cdot S^j=0$ and $E^i\cap E^j\neq\emptyset$.
\end{enumerate}

We notice first of all that by Remark \ref{inizio}, there exists an
extremal ray $S^1$ of $X$, of type $(n-1,n-2)^{sm}$, such that
$\codim\Lo(S^1)=c_X$, and the target of the contraction of $S^1$ is
Fano. Then $\mathcal{S}=\{S^1\}$ satisfies properties (P1), (P2), and (P3).

Consider now an arbitrary ordered set of extremal rays
$\mathcal{S}=\{S^1,\dotsc,S^h\}$ satisfying properties (P1), (P2), and (P3).
We show that $h\leq\rho_X$.

Let $\gamma_i\in S^i$ a non-zero element, for $i=1,\dotsc,h$. 
We have $E^i\cdot\gamma_i\neq 0$ for every $i=1,\dotsc,h$, and
$E^i\cdot\gamma_j=0$ for every $1\leq j<i\leq h$ by (P3). This shows
that 
 $\gamma_1,\dotsc,\gamma_h$ are linearly independent in $\N(X)$:
indeed if there exists $a_1,\dotsc,a_h\in\R$ such that
$\sum_{i=1}^ha_i\gamma_i=0$, then intersecting with $E_h$ we get
$a_h=0$, and so on. Thus $h\leq\rho_X$.

Then Lemma~\ref{due} is a consequence of the following claim.
\end{proof}
\begin{claim2}\label{demi}
Assume that  $\mathcal{S}=\{S^1,\dotsc,S^h\}$ is an ordered set of extremal rays
having
  properties (P1), (P2), and (P3). Then either  $R_0:=S^h$
satisfies the statement of Lemma~\ref{due}, 
or there exists an extremal ray $S^{h+1}$
such that $\mathcal{S}':=\{S^1,\dotsc,S^h,S^{h+1}\}$ still has
properties (P1), (P2), and (P3). 
\end{claim2}
\begin{proof}[Proof of Claim~\ref{demi}]
By (P1) the ray $S^h$ is of type $(n-1,n-2)^{sm}$, the target of its
contraction is Fano, and  
 $\codim\N(E^h,X)=c_X$.  Consider a
special Mori program for $-E^h$ (which exists by  Proposition \ref{tomm}), 
and let $S^{h+1}_1,\dotsc,S^{h+1}_s$ be the extremal rays determined
by the Mori program, as in Lemma~\ref{a}.
Notice that $s\geq 3$ by
\ref{a}(1).
We set $E^{h+1}_l:=\Lo(S^{h+1}_l)$ for
$l=1,\dotsc,s$, so that $E^{h+1}_1,\dotsc,E^{h+1}_s$ are the
$\pr^1$-bundles determined by the Mori program. By
\ref{pluto}$(3)$ we have
\renewcommand{\theequation}{\thethmdue}\stepcounter{thmdue}
\begin{equation}\label{recall}
E^h\cdot S^{h+1}_l>0\ \text{ and
}\ S^{h+1}_l\not\subset\N(E^h,X)\ \text { for every }l=1,\dotsc,s, 
\end{equation}
and $E^{h+1}_1,\dotsc,E^{h+1}_s$ are pairwise disjoint by \ref{pluto}$(4)$.

Remark \ref{inizio} shows that the intersections $E^{h+1}_l\cdot S^h$ (for
$l=1,\dotsc,s$) are either 
all zero, or all positive. In the latter case, $S^h$ satisfies the
statement of Lemma~\ref{due}.

Thus let us assume that  $E^{h+1}_1\cdot S^h=\cdots=E^{h+1}_s\cdot
S^h=0$, and set  $S^{h+1}:=S^{h+1}_1$
and $E^{h+1}:=E^{h+1}_1$.  

Since by assumption $\mathcal{S}$ has properties (P1) and (P2), in
order to show that $\mathcal{S}'$ still satisfies (P1) and (P2), we
just have to consider the case $i=h+1$. Then (P2) is given by
\eqref{recall}, and (P1) follows from \ref{a}(2). 

Now let us show 
the following:
\renewcommand{\theequation}{\thethmdue}\stepcounter{thmdue}
\begin{equation}\label{orata}
E_l^{h+1}\cdot S^j=0\ \text{ and }
\ E_l^{h+1}\cap E^j\neq\emptyset\quad\text{for every }j=1,\dotsc,h\text{
and }l=1,\dotsc,s.
\end{equation}
In particular, for $l=1$, \eqref{orata} implies
 that  $\mathcal{S}'$ satisfies (P3).

Let $l\in\{1,\dotsc,s\}$. Since $E^h\cdot S_l^{h+1}>0$ by \eqref{recall},
we have $E^h\cap E_l^{h+1}\neq 
\emptyset$; moreover we have assumed that $E_l^{h+1}\cdot S^h=0$.
Therefore \eqref{orata} holds for $j=h$ and $l=1,\dotsc,s$. 

We proceed by decreasing induction on $j$: we assume that
\eqref{orata} holds 
for some $j\in\{2,\dotsc,h\}$ and for every $l=1,\dotsc,s$,
and we show that 
$E_l^{h+1}\cdot S^{j-1}=0$ and $E_l^{h+1}\cap E^{j-1}\neq\emptyset$
for every  $l=1,\dotsc,s$.

\medskip

Fix $l\in\{1,\dotsc,s\}$. Since $E_l^{h+1}\cdot S^j=0$ and
$E_l^{h+1}\cap E^j\neq\emptyset$ by the induction assumption,
$E_l^{h+1}$ contains a
curve $C$ with class in $S^{j}$,
 in particular
\renewcommand{\theequation}{\thethmdue}\stepcounter{thmdue}
\begin{equation}\label{preciso}
S^j\subset\N(E_l^{h+1},X).
\end{equation}
Since 
 $E^{j-1}\cdot S^{j}>0$ by (P2), we have $E^{j-1}\cap C\neq\emptyset$ and
 hence $E_l^{h+1}\cap
E^{j-1}\neq\emptyset$. 
Moreover $E_l^{h+1}\cdot S^{j}=0$ implies that $E_l^{h+1}\neq E^{j-1}$, thus
$E_l^{h+1}\cdot S^{j-1}\geq 0$. 

Recall from (P1) that $E^{j-1}$ is the locus of the extremal ray $S^{j-1}$, of
type $(n-1,n-2)^{sm}$; in particular $E^{j-1}$ is a $\pr^1$-bundle.
Since $E^{h+1}_1,\dotsc,E^{h+1}_{s}$ are pairwise disjoint,
by Remark \ref{intersection}  the intersections 
$E_l^{h+1}\cdot S^{j-1}$ (for $l=1,\dotsc,s$)
are either all zero or all positive. 

By contradiction, suppose that $E_l^{h+1}\cdot S^{j-1}>0$ for every
$l=1\dotsc,s$. We have $\codim\N(E^{j-1},X)=c_X$ by (P1), hence
 \ref{elem2}$(1)$ gives
$$\codim\N(E^{j-1}\cap E^{h+1}_l,X)\leq \codim\N(E^{j-1},X)+1=c_X+1\ 
\text{ for every }l=1,\dotsc,s.$$
Since $s\geq 3$, we can apply Lemma~\ref{meno} to $E^{j-1}$ and
$E^{h+1}_1,\dotsc,E^{h+1}_s$, and deduce that $\codim\N(E^{j-1}\cap
E^{h+1},X)=c_X+1$ and $\N(E^{j-1}\cap
E^{h+1},X)\subseteq (E^{h+1})^{\perp}$.
In particular
  $$\N(E^{j-1}\cap
E^{h+1},X)\subseteq\N(E^{h+1},X)\cap (E^{h+1})^{\perp}.$$
On the other hand $\N(E^{h+1},X)\not\subseteq (E^{h+1})^{\perp}$
because $E^{h+1}\cdot S^{h+1}<0$, therefore
$$\codim\left(\N(E^{h+1},X)\cap (E^{h+1})^{\perp}\right)=c_X+1=\codim 
\N(E^{j-1}\cap
E^{h+1},X),$$  and the two subspaces coincide. 

By \eqref{preciso} and by the induction assumption we have
 $S^{j}\subset \N(E^{h+1},X)\cap
(E^{h+1})^{\perp}$, therefore $S^j\subset\N(E^{j-1},X)$, and this
contradicts property (P2).
\end{proof}
\begin{proof}[Proof of Proposition \ref{primameta}]
  Let  $R_0$ be the extremal ray of $X$
given by Lemma~\ref{due}, and set
$E_0:=\Lo(R_0)$. Then $\codim\N(E_0,X)=c_X$, and 
 there exists a special Mori program for $-E_0$ which determines extremal rays
 $R_1,\dotsc,R_{s}$ such that
$E_i\cdot R_0>0$ for all $i=1,\dotsc,{s}$, where $E_i:=\Lo(R_i)$.
Thus Lemma~\ref{tre}
applies.

\medskip

If $R$ is an extremal ray of $X$ different from $R_1,\dotsc,R_s$,
by \ref{tre}$(2)$ we have $E_i\cdot R\geq 0$ for every $i=1,\dotsc,s$, hence 
$(-K_X+E_1+\cdots+E_{s})\cdot R>0$. On the other hand
$(-K_X+E_1+\cdots+E_{s})\cdot R_i=0$ for every $i=1,\dotsc,s$
(recall from \ref{tre}(3) that $E_1,\dotsc,E_s$ are pairwise
disjoint), 
therefore $-K_X+E_1+\cdots+E_{s}$ is nef and 
$$(-K_X+E_1+\cdots+E_{s})^{\perp}\cap\NE(X)=R_1+\cdots+R_{s}$$ 
is a face of $\NE(X)$, of dimension $s$  
by \ref{tre}$(5)$.

Let   $\sigma\colon X\to X_{s}$ be the associated
contraction, so that $\ker\sigma_*=\R(R_1+\cdots+R_s)$.
 Since $E_1,\dotsc,E_s$ are pairwise disjoint, 
we see that $\Exc(\sigma)=E_1\cup\cdots\cup E_s$, $X_s$ is smooth, and 
$\sigma$ is the
blow-up of $s$ smooth, pairwise disjoint, irreducible
 subvarieties $T_1,\dotsc,T_{s}\subset X_{s}$ of codimension $2$, 
where $T_i:=\sigma(E_i)$
 for $i=1,\dotsc,s$. 
Moreover $X_{s}$
is again Fano, because
$-K_X+E_1+\cdots+E_{s}=\sigma^*(-K_{X_s})$. 
Recall from \ref{tre}$(1)$ that
 $E_i\cong\pr^1\times F$, and notice that $\sigma_{|E_i}$ is the
 projection onto $F\cong T_i$.

Set $(E_0)_s:=\sigma(E_0)\subset X_s$.
Since $E_0\cong\pr^1\times F$
and $E_0\cap E_i=\{pts\}\times F$ for $i=1,\dotsc,s$
by \ref{tre}$(1)$ and \ref{tre}$(3)$,
the morphism $\sigma_{|E_0}\colon E_0\to (E_0)_s$ is birational and
finite, \emph{i.e.}\ it is the normalization. Moreover for
$i=1,\dotsc,s$
we have $T_i=\sigma(E_0\cap E_i)\subset (E_0)_s$, so that
\renewcommand{\theequation}{\thethmdue}\stepcounter{thmdue}
\begin{equation}\label{sab0}
\N(T_i,X_s)=\sigma_*\left(\N(E_0\cap E_i,X)\right)=\sigma_*(L),
\end{equation}
where $L\subset\N(X)$ is the linear subspace defined in \ref{tre}(5).
Again by 
\ref{tre}$(5)$ we know that $\N(E_0,X)=\R R_0\oplus L$, and that
$\dim(\ker\sigma_*+ \N(E_0,X))=\dim\ker\sigma_*+\dim \N(E_0,X)$, therefore:
\renewcommand{\theequation}{\thethmdue}\stepcounter{thmdue}
\begin{equation}\label{sab1}
\ker\sigma_*\cap \N(E_0,X)=\{0\}\
\text{ and }
\ 
\N((E_0)_s,X_s)=\R\sigma_*(R_0)\oplus \sigma_*(L).
\end{equation}

Finally,
since $\sigma^*((E_0)_s)=E_0+\sum_{i=1}^s(E_0\cdot f_i)E_i$ (as usual
we denote by $f_i\subseteq E_i$ a 
fiber of the $\pr^1$-bundle),
 by \ref{tre}$(4)$ and  \ref{tre}$(6)$   we see that 
\renewcommand{\theequation}{\thethmdue}\stepcounter{thmdue}
\begin{equation}\label{sab3}
(E_0)_s\cdot\sigma(f_0)=\sum_{i=1}^s(E_0\cdot f_i)(E_i\cdot f_0)-1>0\ 
\text{ and }\ \sigma_*(L)\subseteq
(E_0)_s^{\perp}
\end{equation}
 (recall that $s\geq 3$ and $s\in\{c_X-1,c_X\}$ by \ref{a}(1)).

\medskip

Factoring $\sigma$ as a sequence of $s$ blow-ups, we can view 
$\sigma\colon X\to X_s$
as a part of a special Mori program for $-E_0$ in $X$, with $s$ steps,
and by \eqref{sab1}
at each step we have $Q_i\not\subset\N((E_0)_i,X_i)$. In particular 
\ref{general}$(3)$ yields that
$\codim\N((E_0)_s,X_s)=\codim\N(E_0,X)-s=c_X-s$, hence 
either $s=c_X$ and $\N((E_0)_s,X_s)=\N(X_s)$, or $s=c_X-1$ and
$\codim\N((E_0)_s,X_s)=1$. 
\begin{parg2}\label{infine}
Suppose that there exists an extremal ray $R$ 
of $X_s$ with $(E_0)_s\cdot R>0$ and $\Lo(R)\subsetneq X_s$. Then
$s=c_X-1$ and $R\not\subset\N((E_0)_s,X_s)$.

\medskip

Since we have shown that  $\N((E_0)_s,X_s)=\N(X_s)$ when $s=c_X$, it 
is enough to show that
$R\not\subset\N((E_0)_s,X_s)$.

We first show that $R\not\subset
\NE((E_0)_s,X_s)$. Otherwise, since $\NE((E_0)_s,X_s)\subseteq\NE(X_s)$, $R$
should be a one-dimensional face of $\NE((E_0)_s,X_s)$.
We have $\NE(E_0,X)=R_0+\NE(F_0,X)$ and 
$\NE((E_0)_s,X_s)=\sigma_*(R_0)+\sigma_*(\NE(F_0,X))$.
On the other hand \ref{tre}(5) and \eqref{sab3} give
$$\sigma_*(\NE(F_0,X))
\subset\sigma_*(\N(F_0,X))
=\sigma_*(L)\subseteq (E_0)_s^{\perp},$$ while
$(E_0)_s\cdot
R>0$, therefore we get $R=\sigma_*(R_0)$. But
$(E_0)_s$ is covered by the 
curves $\sigma(f_0)$, so that 
 $\Lo(R)\supseteq
(E_0)_{s}$, which is impossible. 

Therefore $R\not\subset
\NE((E_0)_s,X_s)$, and in particular
the contraction of $R$ is finite on $(E_0)_s$. Since $(E_0)_s\cdot
R>0$, this means that the contraction of $R$ has fibers of dimension
$\leq 1$, therefore $R$ is of type
$(n-1,n-2)^{sm}$ by \cite[Theorem 2.3]{ando} and \cite[Theorem 1.2]{wisn}.

In particular, $E_R:=\Lo(R)$ is a prime divisor covered by curves of
anticanonical degree $1$. Moreover these curves have class in $R$,
thus they cannot be contained in $T_1\cup\cdots\cup T_s$, because 
$T_1\cup\cdots\cup T_s\subset (E_0)_s$. By a standard argument (see
for instance \cite[Remark 2.3]{fanos}) we deduce that 
$E_R\cap
(T_1\cup\cdots\cup T_s)=\emptyset$,   
 hence by \eqref{sab0} and Remark \ref{elem} we have
 $$\sigma_*(L)=\N(T_1,X_s)\subseteq E_R^{\perp}.$$ 
Moreover $E_R\cdot \sigma(f_0)\geq
 0$, because $E_R\neq (E_0)_s$ (as $(E_0)_s\cdot R>0$).

We  show that  $R\not\subset\N((E_0)_s,X_s)$. By contradiction,
suppose that
 $R\subset\N((E_0)_s,X_s)$, and let $C$ be
 an irreducible curve with class in $R$. Then by \eqref{sab1} we have
$[C]=\lambda[\sigma(f_0)]+\gamma$, with $\lambda\in\R$ and
$\gamma\in
\sigma_*(L)$. Using \eqref{sab3} we get
$0<(E_0)_s\cdot C=\lambda (E_0)_s\cdot \sigma(f_0)$ and $(E_0)_s\cdot
\sigma(f_0)>0$,  thus
$\lambda>0$. On the other 
hand $-1=E_R\cdot C=\lambda E_R\cdot\sigma(f_0)$, which gives a
contradiction.  Thus $R\not\subset\N((E_0)_s,X_s)$.
\end{parg2}
\begin{parg2}\label{reduction}
We show that we can assume that there exists an extremal ray $R$ of
$X_s$ such that $(E_0)_s\cdot R>0$ and $\Lo(R)=X_s$.

\medskip

This is clear if $s=c_X$, by \ref{infine}. Suppose that
$s=c_X-1$, and consider an extremal ray $R$ of $X_s$ with
$(E_0)_{c_X-1}\cdot R>0$. If $\Lo(R)=X_{c_X-1}$, we are done; otherwise, by 
\ref{infine}, we have $R\not\subset\N((E_0)_{c_X-1},X_{c_X-1})$.

Let $\sigma_{c_X-1}\colon
X_{c_X-1}\to X_{c_X}$ be the contraction of $R$, and consider the sequence 
$$X\stackrel{\sigma}{\longrightarrow}
 X_{c_X-1}\stackrel{\sigma_{c_X-1}}{\longrightarrow}X_{c_X}.$$
Again, factoring $\sigma$ as a sequence of $c_X-1$ blow-ups, we can view 
this
as a part of a special Mori program for $-E_0$ in $X$, with $c_X$ steps,
and at each step $Q_i\not\subset\N((E_0)_i,X_i)$.

The $\pr^1$-bundles determined by this special Mori program are
$E_1,\dotsc,E_{c_X-1}$, and the transform of $E_R$ in $X$;  
the associated extremal rays (see Lemma~\ref{a}) are
$R_1,\dotsc,R_{c_X-1}$, and an additional extremal ray
$R_{c_X}$.

Since $E_1\cdot R_0>0$, Lemma~\ref{tre} still applies, thus we can
just replace
$R_1,\dotsc,R_{c_X-1}$ with
$R_1,\dotsc,R_{c_X}$, and restart. Since now the extremal rays are
$c_X$ (instead of $c_X-1$), we are done by what precedes.
\end{parg2}
\begin{parg2}
By \ref{reduction} there exists an elementary contraction of
fiber type
 $\ph\colon X_{s}\to Y$ 
 such that $(E_0)_{s}\cdot\NE(\ph)>0$;  set
$\psi:=\ph\circ\sigma\colon X\to Y$, and notice that $\ph((E_0)_s)=\psi(E_0)=Y$. 
$$\xymatrix{X \ar@/^1pc/[rr]^{\psi}\ar[r]_{\sigma}
  &{X_{s}}\ar[r]_{\ph}& Y}$$
The sequence above is a Mori program for $-E_0$, with $s$ steps,
and at each step $Q_i\not\subset\N((E_0)_i,X_i)$.
By \ref{pluto}$(2)$ we have two possibilities: either
$\N((E_0)_s,X_s)=\N(X_s)$ and $s=c_X$, or $\NE(\ph) 
\not\subset\N((E_0)_s,X_s)$ and $s=c_X-1$. 

Since $\N(T_1,X_s)\subseteq (E_0)_{s}^{\perp}$ by \eqref{sab0} and \eqref{sab3}, 
$\ph$ must be finite on
$T_1$, so that $\dim Y\geq n-2$.
\end{parg2} 
\begin{parg2}{\bf First case:  $\ph$ is not finite on $(E_0)_{s}$.}\label{316}
In this case 
 $\NE(\ph)\subset\N((E_0)_s,X_s)$, therefore
$\N((E_0)_s,X_s)=\N(X_s)$ and $s=c_X$. This also shows that
$L=E_0^{\perp}\cap\cdots\cap E_{c_X}^{\perp}$,
by \ref{tre}$(6)$.

Recall that $Y=\psi(E_0)$, and that
 $E_0\cong \pr^1\times F$ is smooth and Fano by \ref{tre}$(1)$. 
Moreover if $F_0:=\{pt\}\times F\subset E_0$, then $\N(F_0,X)=L$ by 
 \ref{tre}$(5)$. Finally $\N(\sigma(F_0),X_{c_X})=
\sigma_*(L)\subseteq (E_0)_{c_X}^{\perp}$ by  \eqref{sab3}, so that $\ph$ is 
finite on $\sigma(F_0)$. Since $\sigma$ is finite on $E_0$, we deduce 
that $\psi$ is finite on $F_0$.

Let 
$E_0\stackrel{\alpha}{\to}\widetilde{Y}\to Y$ be the Stein factorization
 of $\psi_{|E_0}$. Since   $\ph$ is not finite on $(E_0)_{c_X}$, 
$\alpha$ is a non-trivial contraction of $E_0$. On the other hand 
$\alpha$ is finite on $F_0$: the only possibility is that 
$\widetilde{Y}\cong F$ and $\alpha$ is the projection.

We deduce that
$\dim Y=n-2$ and that $\psi$ contracts $f_0$, hence
$\NE(\ph)=\sigma_*(R_0)$. Thus
$\NE(\psi)$ is a
$(c_X+1)$-dimensional face of $\NE(X)$ containing
$R_0,\dotsc,R_{c_X}$; in particular $\rho_Y=\rho_X-c_X-1$.  

Let us consider the divisor $$H:=2E_0+\sum_{i=1}^{c_X}E_i$$ on $X$. By
\ref{tre}$(4)$ we have 
$H\cdot R_i>0$ for every $i=0,\dotsc,c_X$, and
$$L= E_0^{\perp}\cap\cdots\cap E_{c_X}^{\perp}\subseteq H^{\perp}.$$ 

Recall from \ref{tre}$(1)$ and \ref{tre}$(5)$ that for every
$i=0,\dotsc,c_X$ we have $E_i\cong\pr^1\times F$, and if
$F_i:=\{pt\}\times F\subset E_i$, then $\N(F_i,X)=L\subset
H^{\perp}$. In particular $\NE(E_i,X)=R_i+\NE(F_i,X)\subset R_i+L$.

Let $C\subset X$ be an irreducible curve with 
$C\subset\Supp H=E_0\cup\cdots\cup E_{c_X}$. Then $C\subseteq E_i$ for
some $i\in\{0,\dotsc,c_X\}$, hence
$[C]\in R_i+L$ and $H\cdot C\geq 0$.

On the other hand, since $H$ is effective, we have $H\cdot C'\geq 0$
for every irreducible curve $C'$ not contained in $\Supp H$. 
Therefore $H$ is nef and defines a
contraction $\xi\colon X\to S$ such that $\NE(\xi)=H^{\perp}\cap\NE(X)$. 
$$\xymatrix{
& X\ar[dl]_{\xi}\ar[dr]^{\psi}\ar[r]^{\sigma} & {X_{c_X}}\ar[d]^{\ph} \\
S && Y
}$$
Let $i\in\{0,\dotsc,{c_X}\}$. Since $\N(F_i,X)\subset H^{\perp}$,
the image $\xi(F_i)$ is a point, and
$\xi(E_i)=\xi(f_i)$
is an irreducible rational curve (because $H\cdot f_i>0$). Therefore
 $\xi_{|E_i}\colon E_i\to \xi(f_i)$ factors through the
projection $E_i\to\pr^1$.
In particular $\dim\xi(\Supp H)=1$, hence $S$ is a surface
by \ref{divisors}$(1)$.

Let us show that
\renewcommand{\theequation}{\thethmdue}\stepcounter{thmdue}
\begin{equation}\label{disp}
\NE(\xi)=L\cap\NE(X).\end{equation}
We already have $\NE(\xi)=H^{\perp}\cap\NE(X)\supseteq
L\cap\NE(X)$. Conversely, let 
 $C_1\subset X$ be an irreducible curve such that $\xi(C_1)=\{pt\}$,
 \emph{i.e.}\ $H\cdot C_1=0$.

If $C_1$ is disjoint from
$\Supp H=E_0\cup \cdots\cup E_{c_X}$, then $C_1\cdot E_i=0$ for
$i=0,\dotsc,c_X$, hence $[C_1]\in L$.  

If instead  $C_1$ intersects $E_0\cup
\cdots\cup E_{c_X}$,  then it must be contained in it,
and we have $C_1\subset E_i$ for some $i$. Since $\xi_{|E_i}$ factors as
the projection onto $\pr^1$ followed by a finite map, we get $C_1\subset
F_i$, and again $[C_1]\in\N(F_i,X)=L$.
Therefore we have \eqref{disp}.

In particular, for every $i=0,\dotsc,c_X$ we have $\NE(\xi)\subseteq
E_i^{\perp}$, therefore  $E_i=\xi^*(\xi(E_i))$ by \ref{divisors}$(2)$.

\medskip

Let $\pi\colon X\to S\times Y$ be the morphism induced by $\xi$ and
$\psi$. We have $\ker\psi_*=\R(R_0+\cdots+R_{c_X})$, and
$\ker\psi_*\cap L=\{0\}$ by \ref{tre}$(5)$.  Moreover 
$\ker\xi_*\subseteq L$ by
\eqref{disp}, therefore $\pi$ is finite. 

In particular, $\xi$ must be equidimensional, hence $S$ is smooth by
\cite[Proposition 1.4.1]{ABW} and \cite[Lemma~3.10]{fanos}. We
need the following remark.
\begin{remark2}\label{prod}
Let $W$ be a smooth Fano variety and suppose we have two contractions
$$\xymatrix{ & W\ar[dl]_{\pi_1} \ar[dr]^{\pi_2} & \\
{W_1}&&{W_2} }$$
such that $W_1$ is smooth and
the induced morphism $\pi\colon W\to W_1\times W_2$ is
  finite. Consider the relative canonical divisor
  $K_{W/W_1}:=K_W-\pi_1^*K_{W_1}$. 
If $\ker(\pi_2)_*\subseteq (K_{W/W_1})^{\perp}$
in $\N(W)$,
then $\pi$ is an isomorphism.

This is rather standard, we give a proof for the reader's convenience.
Let $d$ be the degree of $\pi$, and $F\subset W$ a general fiber of
$\pi_2$; the restriction $f:=(\pi_1)_{|F}\colon F\to W_1$ is finite of degree
$d$. We observe that $F$ is Fano, hence numerical and linear
equivalence for divisors in $F$ coincide, 
 and by assumption $(K_{W/W_1})_{|F}\equiv 0$.
Then
$$K_F = (K_W)_{|F} = (\pi_1^*K_{W_1})_{|F} = f^*K_{W_1},$$
so that $f$ is \'etale. 
Therefore $W_1$ is Fano too, in particular it is simply connected,
thus $f$ is an 
isomorphism and $d = 1$.
\end{remark2}
We carry on with the proof of Proposition \ref{primameta}. We want to apply
 Remark \ref{prod} to deduce that $\pi\colon X\to S\times Y$ 
is an isomorphism;
for this
we just need to show that $K_{X/S}\cdot R_i=0$ for
$i=0,\dotsc,c_X$, because
$\ker\psi_*=\R(R_0+\cdots+R_{c_X})$. But
this follows easily because $E_i$ are products. 

Indeed since both $S$ and 
$E_i$ are smooth, \ref{divisors}$(4)$ yields that
$\xi(E_i)$ is a smooth curve.
Therefore $\xi(E_i)\cong\pr^1$ and $\xi_{|E_i}$ is the
projection, hence 
$$K_{X/S}\cdot f_i=(K_{X/S})_{|E_i}\cdot f_i=K_{E_i/\xi(E_i)}\cdot f_i=0.$$
Thus we conclude that $\pi$ is an isomorphism and $X\cong S\times Y$.
Moreover since $\rho_Y=\rho_X-c_X-1$, we have $\rho_S=c_X+1$. 
\end{parg2} 
\begin{parg2}{\bf Second case:  $\ph$ is finite on $(E_0)_s$.}\label{second}
Then $\dim Y=n-1$ and every fiber of $\ph$ is one-dimensional;
moreover every fiber of $\psi$ has an irreducible component of
dimension $1$.
Since $X$ and $X_s$ are Fano, \cite[Lemma~2.12 and Theorem 4.1]{AWaview} show that 
$Y$ is smooth and that
 $\ph$ and $\psi$ are conic bundles.
$$\xymatrix{X \ar@/^1pc/[rr]^{\psi}\ar[r]_{\sigma}
  &{X_{s}}\ar[r]_{\ph}& Y}$$
Set $Z_i:=\ph(T_i)=\psi(E_i)\subset Y$ for $i=1,\dotsc,s$. By standard
arguments on conic bundles (as at the end of the proof of
Lemma~\ref{conicbdl}), we see that  
$Z_1,\dotsc,Z_s\subset Y$ are pairwise disjoint smooth prime divisors, 
and that $\ph$ is smooth over
$Z_1\cup\cdots\cup Z_s$. For $i=1,\dotsc,s$ let $\widehat{E}_i\subset
X$ be the transform of $\ph^{-1}(Z_i)\subset X_s$, so that
$\psi^{-1}(Z_i)=E_i\cup \widehat{E}_i$. 
Then
$\widehat{E}_i$ is a smooth $\pr^1$-bundle with fiber
$\widehat{f}_i\subset\widehat{E}_i$, such that  $\widehat{E}_i\cdot
\widehat{f}_i=-1$. Moreover 
$f_i+ \widehat{f}_i$ is numerically equivalent to a general fiber of
$\psi$, and
$E_i\cdot \widehat{f}_i=\widehat{E}_i\cdot {f}_i=1$.

In particular, the divisors
$E_0,E_1,\dotsc,E_s,\widehat{E}_1,\dotsc,\widehat{E}_s$ are all
distinct (recall that $\psi(E_0)=Y$), and 
$E_1\cup\widehat{E}_1,\dotsc,E_s\cup\widehat{E}_s$ are pairwise disjoint.

Let us show that $[E_0], [E_1],\dotsc,[E_{s}],[\widehat{E}_1]$ are
linearly independent in $\mathcal{N}^1(X)$. Indeed suppose that 
$$aE_0+\sum_{i=1}^{s}b_iE_i+d\widehat{E}_1\equiv 0,$$
with $a,b_i,d\in\R$. Intersecting with a general fiber of $\psi\colon
X\to Y$, we get $a=0$. Intersecting with $f_2,\dotsc,f_{s}$, we get
$b_2=\cdots=b_{s}=0$. Finally intersecting with $f_1$ we get $d=b_1$,
that is, $d(E_1+\widehat{E}_1)\equiv 0$, which yields $d=0$, and we
are done. 

If $i,j\in\{1,\dotsc,s\}$ with $i\neq j$, we have
$E_i\cap \widehat{E}_j=\emptyset$, and hence
$L\subseteq\N(E_i,X)\subseteq \widehat{E}_j^{\perp}$ (see Remark
\ref{elem}).  Therefore by \ref{tre}$(6)$
$$
L\subseteq E_0^{\perp}\cap E_1^{\perp}\cap\cdots
\cap E_{s}^{\perp}\cap\widehat{E}_1^{\perp}\cap\cdots
\cap \widehat{E}_{s}^{\perp}\subseteq
E_0^{\perp}\cap E_1^{\perp}\cap\cdots
\cap E_{s}^{\perp}\cap\widehat{E}_1^{\perp}.
$$
Since the classes of $E_0,\dotsc,E_{s},\widehat{E}_1$ in
$\mathcal{N}^1(X)$
are linearly independent and $s\geq c_X-1$, we get
$$c_X+1=\codim L\geq s+2\geq c_X+1,$$
which yields $s=c_X-1$ and
$$L=E_0^{\perp}\cap E_1^{\perp}\cap\cdots
\cap E_{c_X-1}^{\perp}\cap\widehat{E}_1^{\perp}=
E_0^{\perp}\cap E_1^{\perp}\cap\cdots
\cap E_{c_X-1}^{\perp}\cap\widehat{E}_1^{\perp}\cap\cdots
\cap \widehat{E}_{c_X-1}^{\perp}.$$

Let $i\in\{1,\dotsc,c_X-1\}$.
Observe that
$[\widehat{f}_i]\not\in\N(E_i,X)$: otherwise by \ref{tre}$(5)$
we would
have $\widehat{f}_i\equiv \lambda f_i+\gamma$, with $\lambda\in\R$ and
$\gamma\in L\subset E_0^{\perp}\cap E_i^{\perp}$. 
Intersecting with $E_i$ we get $\lambda=-1$, hence
$E_0\cdot \widehat{f}_i=-E_0\cdot {f}_i<0$, which is impossible
because $E_0\neq  \widehat{E}_i$. 
We also notice that $E_0$ cannot contain any curve $\widehat{f}_i$,
because $\sigma(\widehat{f}_i)$ is a fiber of $\ph$, and
 $\ph$ is finite on $(E_0)_{c_X-1}=\sigma(E_0)$.

Therefore we can apply Lemma~\ref{ultimissimo} to $E_0$ and
$E_1,\dotsc,E_{c_X-1},\widehat{E}_1,\dotsc,\widehat{E}_{c_X-1}$, 
and we get:
$$\codim\N(\widehat{E}_i,X)=c_X\ \text{ and
}\ R_i\not\subset\N(\widehat{E}_i,X)\ \text{ for every $i=1,\dotsc,c_X-1$.}$$

Fix again $i\in\{1,\dotsc,c_X-1\}$.
 Lemma~\ref{expl}, applied to $\widehat{E}_i$ and $R_i$,
yields that
$$\N(E_i\cap \widehat{E}_i,X)=\N(\widehat{E}_i,X)\cap E_i^{\perp}
=\N(E_i,X)\cap E_i^{\perp}=L$$
(see \eqref{utile} for the last equality).
Finally
we apply Lemma~\ref{blah} to $D=E_i$ and
$E=\widehat{E}_i$, and we deduce that $\widehat{R}_i:=\R_{\geq
  0}[\widehat{f}_i]$ is an extremal ray of 
type $(n-1,n-2)^{sm}$, $\widehat{E}_i\cong\pr^1\times
\widehat{F}^i$, and $E_i\cap \widehat{E}_i=\{pts\}\times
\widehat{F}^i\subset \widehat{E}_i$. 
On the other hand again  Lemma~\ref{blah}, applied now to 
$D=\widehat{E}_i$ and $E=E_i$, shows that
$E_i\cap \widehat{E}_i=\{pts\}\times
F\subset E_i\cong\pr^1\times F$, hence $\widehat{F}^i=F$.

Observe that $\NE(\psi)=R_1+\widehat{R}_1+\cdots+R_{c_X-1}
+\widehat{R}_{c_X-1}$
has dimension $c_X$, and that
 $\psi_{|E_0}\colon
E_0\cong\pr^1\times F_0 \to Y$ is finite. We need the following lemma.
\begin{lemma2}\label{Vequivalence}
Let $E$ be a projective manifold and
$\pi\colon E\to W$ a $\pr^1$-bundle with fiber $f\subset E$. Moreover let 
$\psi_0\colon E\to Y$ be a morphism onto a projective
manifold $Y$, such that $\dim\psi_0(f)=1$. Suppose that there exists a
prime divisor  
$Z_1\subset Y$ such that $\N(Z_1,Y)\subsetneq\N(Y)$ and
$\psi_0^*(Z_1)\cdot f>0$. Then there is a commutative diagram:
$$\xymatrix{E\ar[r]^{\psi_0}\ar[d]_{\pi}& Y\ar[d]^{\zeta}\\
W\ar[r]&{Y'}
}$$ 
where $Y'$ is smooth and $\zeta$ is a smooth morphism with fibers
isomorphic to $\pr^1$. 
\end{lemma2}
\begin{proof}[Proof of Lemma \ref{Vequivalence}]
Consider the morphism $\phi\colon E\to W\times Y$ induced by $\pi$ and $\psi_0$,
set $E':=\phi(E)\subset  W\times Y$, and let $\pi'\colon E'\to W$ 
be the projection. For every $p\in W$ we have
$\pi^{-1}(p)=\phi^{-1}((\pi')^{-1}(p))$, hence  
$(\pi')^{-1}(p)=\psi_0(\pi^{-1}(p))\subset Y$ is an
irreducible and reduced rational curve in $Y$.

Now $\pi'\colon E'\to W$ is a well defined family of algebraic
one-cycles on $Y$ over $W$ (see \cite[Definition I.3.11 and Theorem
  I.3.17]{kollar}), and 
induces a morphism $\iota\colon W\to\Chow(Y)$. Set
$V:=\iota(W)\subset\Chow(Y)$. Then $V$ is a proper, covering family of
irreducible and reduced rational curves on $Y$,
so that  $V$ is an \emph{unsplit} family (see
\cite[Definition IV.2.1]{kollar}).

The family $V$ induces an equivalence relation on $Y$ as a set, called
$V$-equivalence; we refer the reader to \cite[\S5]{debarreUT} and
references therein for the related definitions and properties.

We have $Z_1\cdot\psi_0(f)>0$; in particular $Z_1$ intersects every
 $V$-equivalence class in $Y$. This implies that 
$$\N(Y)=\R[\psi_0(f)]+\N(Z_1,Y)$$
(see for instance \cite[Lemma~3.2]{occhetta}).
On the other hand by assumption
$\N(Z_1,Y)\subsetneq\N(Y)$,
therefore
$[\psi_0(f)]\not\in\N(Z_1,Y)$.

Let $T\subseteq Y$ be a $V$-equivalence class; notice that $T$ is either a
closed subset, or 
a countable union of closed subsets.
Let
 $T_1\subseteq T$ be an irreducible closed subset with $\dim T_1=\dim T$. 
We have $\N(T_1,Y)=\R[\psi_0(f)]$ by
\cite[Proposition IV.3.13.3]{kollar}, and $T_1\cap Z_1\neq\emptyset$. This
implies that $\dim  (T_1\cap Z_1)=0$ and $\dim T=\dim T_1=1$, that is:
\emph{every $V$-equivalence class has dimension $1$}. 
Then by
\cite[Proposition 1]{unsplit} there exists a contraction
$\zeta\colon
Y\to Y'$ whose fibers coincide with $V$-equivalence classes. 

Since $Y$ is smooth, $Y'$ is irreducible, and $\zeta$ has connected fibers, 
the
general fiber $l$ of $\zeta$ is irreducible and smooth. 
But $l$ is a $V$-equivalence class and $\dim l=1$, hence
$l\cong\pr^1$ and $-K_Y\cdot l=2$. 
Moreover
 $\NE(\zeta)=\R_{\geq 0}[l]$, so $-K_Y$ is $\zeta$-ample;
this implies that 
 $\zeta$ is an elementary contraction and a conic bundle, and
 that $Y'$ is smooth (see \cite[Theorem 3.1]{ando}). 
Finally $\zeta$ 
cannot have singular fibers, because the family $V$ is unsplit.
\end{proof}
Let us carry on with the proof of Proposition \ref{primameta}.
We have $\psi^*(Z_1)\cdot f_0=(E_1+\widehat{E}_1)\cdot f_0>0$,
and $\N(Z_1,Y)\subseteq Z_2^{\perp}\subsetneq\N(Y)$ because $Z_1\cap
Z_2=\emptyset$
(see Remark \ref{elem}).
Therefore we can apply Lemma~\ref{Vequivalence} to $E_0$ and
$\psi_0:=(\psi)_{|E_0}\colon E_0\to Y$. This shows
that $[\psi(f_0)]$ belongs to an extremal ray of $Y$,
whose contraction is a smooth conic bundle
$\zeta\colon Y\to Y'$.

We consider the composition
 $\psi':=\zeta\circ\psi\colon X\to Y'$; the cone $\NE(\psi')$ is a
 $(c_X+1)$-dimensional face of $\NE(X)$ containing
 $R_0,R_1,\dotsc,R_{c_X-1},\widehat{R}_1,\dotsc,\widehat{R}_{c_X-1}$,
and $\rho_{Y'}=\rho_X-c_X-1$. 

Now we proceed similarly to the previous case. 
   Let us consider the divisor
$$H':=2E_0+2\sum_{i=1}^{c_X-1}E_i+\sum_{i=1}^{c_X-1}\widehat{E}_i$$ 
on $X$. We have $H'\cdot R_0>0$,
$H'\cdot R_i>0$ and $H'\cdot \widehat{R}_i>0$ 
for every $i=1,\dotsc,c_X-1$, and $(H')^{\perp}\supseteq L$. As before, $H'$
is nef and defines a 
contraction onto a surface $\xi'\colon X\to S$, such that 
$\xi'(E_0)$, $\xi'(E_i)$, and $\xi'(\widehat{E}_i)$
are irreducible rational curves and $E_0=(\xi')^*(\xi'(E_0))$,
$E_i=(\xi')^*(\xi'(E_i))$, $\widehat{E}_i=(\xi')^*(\xi'(\widehat{E}_i))$
for all $i=1,\dotsc,c_X-1$.
$$\xymatrix{
& X\ar[d]^{\psi'}\ar[dl]_{\xi'}\ar[dr]^{\psi}\ar[r]^{\sigma} &
  {X_{c_X-1}}\ar[d]^{\ph} \\ 
{S} & {Y'} & Y\ar[l]^{\zeta}
}$$

Then we consider the morphism $\pi'\colon X\to S\times Y'$ induced by
$\xi'$ and $\psi'$. 
As in the previous case, one sees first that $\pi'$ is
finite, and then that it is an isomorphism, applying Remark \ref{prod}.
Finally we have $\rho_{S}=c_X+1$, because $\rho_{Y'}=\rho_X-c_X-1$. 
\end{parg2}
\begin{parg2}
We have shown in \ref{316} and \ref{second} that $X\cong S\times T$,
where $S$ is a Del Pezzo surface with $\rho_{S}=c_X+1$ 
(and $T=Y$ in \ref{316}, while $T=Y'$ in \ref{second}).
In particular
$c_X\leq 8$, as $\rho_S\leq 9$. Finally
 $c_T\leq c_X$ by Example \ref{max}, and
this concludes the proof of Proposition \ref{primameta}.
\end{parg2}
\vspace{-6pt}
\end{proof}
%%%%%%%%%%%%%%%%%%%%%%%%%%%%%%%%%%%%%%%%%%%%%%%%%%%%%%%%%%%%%%%%%%%%%
\subsection{The case of codimension $3$}\label{sectiontre}
In this section we show the following.
\begin{proposition2}\label{secondameta}
Let $X$ be a Fano manifold with $c_X=3$.
Then there exists a flat, quasi-elementary
contraction $X\to T$ where $T$ is an $(n-2)$-dimensional Fano
manifold, $\rho_X-\rho_T=4$, and $c_T\leq 3$.
\end{proposition2}
\begin{proof}
By Corollary \ref{asilo}, there exist
 a prime divisor $D\subset X$
  with $\codim\N(D,X)=3$, and a special Mori program for $-D$, such
  that $Q_k\not\subset\N(D_k,X_k)$.  
\renewcommand{\theequation}{\thethmdue}\stepcounter{thmdue}
\begin{equation}
\label{veronica}
\xymatrix{
{X=X_0}\ar@/^1pc/@{-->}[rrrr]^{\sigma}\ar@/_1pc/@{-->}[drrrr]_{\psi}
\ar@{-->}[r]_{\,\quad \sigma_0}& 
{X_1}\ar@{-->}[r]&
{\cdots}\ar@{-->}[r] &
{X_{k-1}}\ar@{-->}[r]_{\sigma_{k-1}}& 
{X_k}\ar[d]^{\ph}\\
&&&&Y} 
\end{equation}
We apply Lemmas \ref{pluto} and \ref{conicbdl}. By \ref{pluto}(2) and
\ref{pluto}(3),  there exist exactly two 
indices $i_1,i_2\in\{0,\dotsc,k-1\}$ such that
$Q_{i_j}\not\subset\N(D_{i_j},X_{i_j})$; the $\pr^1$-bundles
$E_1,E_2\subset X$ determined by the Mori program are the transforms
of $\Exc(\sigma_{i_1}),\Exc(\sigma_{i_2})$ respectively. Let moreover
$\widehat{E}_1,\widehat{E}_2\subset X$ be as in \ref{conicbdl}(4).  
Recall that for $i=1,2$ $E_i$ (respectively, $\widehat{E}_i$) is a
smooth $\pr^1$-bundle with fiber $f_i\subset E_i$ (respectively,
$\widehat{f}_i\subset\widehat{E}_i$), such that $E_i\cdot
f_i=\widehat{E}_i\cdot\widehat{f}_i=-1$, $E_i\cdot\widehat{f}_i>0$,
and $\widehat{E}_i\cdot f_i>0$. Moreover $(E_1\cup\widehat{E}_1)\cap  
(E_2\cup\widehat{E}_2)=\emptyset$.  
\begin{parg2}\label{terzooutline}
Before going on, let us
 give an outline of what we are going to do.
 
Our goal is
to show that $k=2$ and
$\sigma$ is just the composition of two smooth blow-ups with
exceptional divisors $E_1$ and $E_2$. The proof of this fact
is quite technical, and
 will be achieved in several
steps.

We first show in~\ref{details} some properties of $\N(E_i,X)$ and
$\N(\widehat{E}_i,X)$
which are needed in the sequel.

In \ref{F} we prove that if  $F\subset X$ is 
a prime divisor whose class
in $\mathcal{N}^1(X)$ spans a one-dimensional face of the cone of
effective divisors $\Eff(X)\subset\mathcal{N}^1(X)$ (see \ref{324}), then
$F$ must intersect both
$E_1\cup \widehat{E}_1$ and 
$E_2\cup
\widehat{E}_2$. 

Then  we  show in \ref{325} that  the Mori program
\eqref{veronica} contains only two divisorial contractions,
the ones with exceptional divisors $E_1$ and $E_2$. We proceed by
contradiction, applying \ref{F} to the exceptional divisor of a
divisorial contraction (different from $\sigma_{i_1}$ and
$\sigma_{i_2}$)
in the Mori program.

In \ref{326b} and \ref{327} we prove the existence of two disjoint
prime divisors $F,\widehat{F}\subset X$, which are smooth
$\pr^1$-bundles with fibers $l\subset F$,
$\widehat{l}\subset\widehat{F}$ such that $F\cdot
l=\widehat{F}\cdot\widehat{l}=-1$, which are horizontal for the
rational conic bundle $\psi\colon X\dasharrow Y$, 
and intersect the divisors $E_1,
E_2,\widehat{E}_1,\widehat{E}_2$ in a suitable way. 

Finally in \ref{328} and \ref{329} we use
$F$ and $\widehat{F}$ to show that 
the Mori program \eqref{veronica} contains no flips.
This means that $k=2$, $X_2$ and $Y$ are smooth,
$\sigma$ is just a smooth blow-up  with exceptional divisors
$E_1$ and $E_2$, and $\ph$ and $\psi$ are
conic bundles. 

The situation is now analogous to the one in \ref{second}, 
and similarly to that case
we prove that there is a smooth conic bundle $Y\to Y'$,
where $\dim Y'=n-2$ (see \ref{330}). We have $\rho_X-\rho_{Y'}=4$, and
the contraction $X\to Y'$ is flat
and quasi-elementary. 

To conclude, in \ref{331} 
we show that the conic bundle $\ph\colon X_2\to Y$
is smooth. This implies that every fiber of
the conic bundle $\psi\colon X\to Y$
is reduced, and hence by a result in \cite{wisn}
both $Y$ and $Y'$ are Fano.
\end{parg2}
\begin{parg2}\label{details}
For $i=1,2$ we have:
$$\codim\N(E_i,X)=\codim\N(\widehat{E}_i,X)=3,\quad
[\widehat{f}_i]\not\in\N({E}_i,X),\ 
\text{ and }\ 
[f_i]\not\in\N(\widehat{E}_i,X);$$
in particular $\N(E_i,X)\neq\N(\widehat{E}_i,X)$.

Indeed $[\widehat{f}_i]\not\in\N(E_i,X)$ by
\ref{conicbdl}$(4)$. Moreover $D$ cannot contain any curve
$\widehat{f}_i$, because $\sigma(\widehat{f}_i)$ is a fiber of $\ph$, and
$\ph$ is finite on $D_{k}\subset X_k$.  
Therefore Lemma~\ref{ultimissimo} yields the statement.
\end{parg2}
\begin{parg2}\label{324}
Let $Z$ be a Mori dream space, and
$\Eff(Z)\subset\mathcal{N}^1(Z)$ the convex cone spanned by
classes of effective divisors. By
\cite[Proposition 1.11(2)]{hukeel} $\Eff(Z)$ is a closed, convex polyhedral cone.
If $F\subset Z$ is a prime divisor covered by a family of curves with
which $F$ has negative intersection, then it is easy to see that 
$[F]\in\mathcal{N}^1(Z)$ 
spans a one-dimensional face of $\Eff(Z)$, and that the
only prime divisor whose class belongs to this face is $F$
itself.  In
particular, this is true for $E_1,
E_2,\widehat{E}_1,\widehat{E}_2\subset X$ (recall that $X$ is a Mori
dream space by Theorem \ref{FF}).
\end{parg2}
\begin{parg2}\label{F}
Consider a prime divisor $F\subset X$ such that $[F]$
 spans a one-dimensional face of $\Eff(X)$. We show that 
if $F$ is different from $E_1,
E_2,\widehat{E}_1,\widehat{E}_2$, then  $F$ must intersect both
$E_1\cup \widehat{E}_1$ and 
$E_2\cup
\widehat{E}_2$. 

\medskip

 Indeed if for instance $F$ is disjoint from $E_1\cup
\widehat{E}_1$, then $\N(E_1,X)\cup\N(\widehat{E}_1,X)\subseteq
E_2^{\perp}\cap \widehat{E}_2^{\perp}\cap F^{\perp}$ (see
Remark \ref{elem}). However this is 
impossible, because since $[E_2],[\widehat{E}_2],[F]\in\mathcal{N}^1(X)$
span three distinct one-dimensional faces of $\Eff(X)$, they must be
 linearly independent, thus $E_2^{\perp}\cap \widehat{E}_2^{\perp}\cap
 F^{\perp}$ has codimension $3$, while
$\N(E_1,X)$ and $\N(\widehat{E}_1,X)$ are distinct subspaces of codimension
$3$ by \ref{details}.
\end{parg2}
\begin{parg2}\label{325}
Let us show that $\sigma_i$ is a flip
for every $i\in\{0,\dotsc,k-1\}\smallsetminus\{i_1,i_2\}$, namely that
$\sigma_{i_1}$ and $\sigma_{i_2}$ are the unique divisorial
contractions in the Mori program \eqref{veronica}. 

\medskip

 By contradiction, suppose that there exists
 $i\in\{0,\dotsc,k-1\}\smallsetminus\{i_1,i_2\}$ such that $\sigma_i$
is a divisorial contraction. By \ref{324} 
$\Exc(\sigma_i)\subset X_i$ is a prime divisor
whose class spans a one-dimensional face of $\Eff(X_i)$, and it is the
unique prime divisor in $X_i$ with class in $\R_{\geq
  0}[\Exc(\sigma_i)]$.\footnote{Notice that $X_i$ is again a Mori
  dream space.} 

 Let $G\subset X$ be
the transform of $\Exc(\sigma_i)$. By \ref{conicbdl}(3) and
\ref{conicbdl}(4) there exists an open subset $U\subseteq X$,
containing $E_1,
E_2,\widehat{E}_1,\widehat{E}_2$, such that $\sigma$ is regular on
$U$, and  $\Exc(\sigma_i)$ is disjoint from the image of $U$ in
$X_i$. Therefore $G\cap U=\emptyset$, in particular
the divisor  $G$ is 
disjoint from  $E_1,
E_2,\widehat{E}_1,\widehat{E}_2$.

 Then \ref{F} shows that
$[G]\in\mathcal{N}^1(X)$ cannot span
 an extremal ray of $\Eff(X)$. This means that $[G]=\sum_j\lambda_j [G_j]$
 with $\lambda_j\in\R_{>0}$ and $G_j\subset X$
 prime divisors such that $[G]\not\in\R_{\geq 0}[G_j]$; in particular $G_j\neq G$.

On the other hand, 
the map $\xi:=\sigma_{i-1}\circ\cdots\circ\sigma_0\colon X\dasharrow
X_i$ 
induces a surjective linear map 
$\xi_*\colon\mathcal{N}^1(X)\to\mathcal{N}^1(X_{i})$  
such
that $\xi_*(\Eff(X))=\Eff(X_i)$. 
Then in $\mathcal{N}^1(X_i)$ we get 
$$[\Exc(\sigma_i)]=[\xi_*(G)]=\sum_j\lambda_j[\xi_*(G_j)],$$  
hence $[\xi_*(G_j)]\in\R_{\geq 0}[\Exc(\sigma_i)]$ for every $j$. 
If $\xi_*(G_j)\neq 0$ for some $j$, then $\xi_*(G_j)$ is a prime divisor, and
we get $\xi_*(G_j)=\Exc(\sigma_i)$ and
hence $G_j=G$, a contradiction. 
Thus $\xi_*(G_j)=0$
for every $j$, therefore $[\Exc(\sigma_i)]=0$,  again a contradiction.
\end{parg2}
\begin{parg2}\label{326}
Let $F\subset X$ be a smooth prime divisor which is a
 $\pr^1$-bundle with $F\cdot l=-1$,
where 
$l\subset F$ is a fiber. Suppose that $F$ is
 different from $E_1,
E_2,\widehat{E}_1,\widehat{E}_2$.
Then:
\begin{enumerate}[$\bullet$]
\item
$F$ must intersect both $E_1\cup\widehat{E}_1$ and $E_2\cup\widehat{E}_2$; 
\item
 either $E_1\cdot l=\widehat{E}_1\cdot l= 
E_2\cdot l=\widehat{E}_2\cdot l=0$, or
$(E_1+\widehat{E}_1)\cdot l>0$ and 
$(E_2+\widehat{E}_2)\cdot l>0$.
\end{enumerate}

\medskip

By \ref{324} $[F]$ spans a one-dimensional face of $\Eff(X)$,
so that \ref{F} gives the first statement.  

Recall that $(E_1\cup\widehat{E}_1)\cap(E_2\cup\widehat{E}_2)=\emptyset$. 
If $(E_1+\widehat{E}_1)\cdot l=0$, since $F$ intersects $E_1\cup\widehat{E}_1$, 
there exists a fiber $\overline{l}$ of the $\pr^1$-bundle structure of
$F$ which is contained in $E_1\cup\widehat{E}_1$. Thus
$\overline{l}\cap (E_2\cup\widehat{E}_2)=\emptyset$, and we get
$(E_2+\widehat{E}_2)\cdot l=0$.  
In this way we see that the intersections
$(E_1+\widehat{E}_1)\cdot l$, $(E_2+\widehat{E}_2)\cdot l$ are either
both zero or both  positive, and this gives the second statement.
\end{parg2}
\begin{parg2}\label{326b}
We show that  there exist two disjoint smooth prime divisors
$F,\widehat{F}\subset X$, different from $E_1,
E_2,\widehat{E}_1,\widehat{E}_2$,
 such that:
\begin{enumerate}[$\bullet$]
\item $F$ and $\widehat{F}$ are $\pr^1$-bundles, with fibers 
$l\subset F$ and $\widehat{l}\subset \widehat{F}$ respectively, such that 
$F\cdot l=\widehat{F}\cdot \widehat{l}=-1$;
\item
 the intersections
$(E_1+\widehat{E}_1)\cdot l$, $(E_1+\widehat{E}_1)\cdot
\widehat{l}$,  
$(E_2+\widehat{E}_2)\cdot l$, $(E_2+\widehat{E}_2)\cdot
\widehat{l}$
are all positive.
\end{enumerate}

\medskip

 We have $\codim\N(E_1,X)=3$ (see \ref{details}).
Consider a special Mori program for $-E_1$ (which exists by
Proposition \ref{tomm}), and let  $G_1,\dotsc,G_s\subset X$ be the
$\pr^1$-bundles determined by the Mori program. Recall from
Lemma~\ref{pluto} that   $G_1,\dotsc,G_s$ are  
 pairwise disjoint smooth prime divisors, with
$2\leq s\leq 3$, 
such that
 every $G_i$ is a $\pr^1$-bundle with  $G_i\cdot
r_i=-1$, where  $r_i\subset G_i$ is a fiber;
moreover $E_1\cdot r_i>0$.
 In particular $G_i\neq E_1$ and
$G_i\cap E_1\neq\emptyset$, thus $G_i\neq E_2$ and $G_i\neq\widehat{E}_2$. 
Finally, if $G_i\neq\widehat{E}_1$,  by  \ref{326}
we have $(E_1+\widehat{E}_1)\cdot r_i>0$ and  
$(E_2+\widehat{E}_2)\cdot r_i>0$. 

Suppose that $\{G_1,\dotsc,G_s\}$ contains at least two divisors
distinct from 
 $\widehat{E}_1$, say $G_1$ and $G_2$. Then
 we set $F:=G_1$ and $\widehat{F}:=G_2$, and we are done.

Otherwise, we
have $s=2$ and $G_2=\widehat{E}_1$. Then Lemma~\ref{conicbdl} applies, and by
\ref{conicbdl}(4) there exists a
smooth prime divisor $\widehat{G}_2$, having a $\pr^1$-bundle
structure with fiber $\widehat{r}_2$, such that:  
$$\widehat{G}_2\cdot \widehat{r}_2=-1,\quad 
G_1\cap\widehat{G}_2=\emptyset,\quad \widehat{G}_2\neq E_1,\ \text{ and }\
\widehat{E}_1\cdot  \widehat{r}_2 =1.$$ 
 In particular $\widehat{G}_2\neq\widehat{E}_1$ and 
$\widehat{G}_2\cap\widehat{E}_1\neq\emptyset$, therefore
$\widehat{G}_2\neq {E}_2$ and $\widehat{G}_2\neq\widehat{E}_2$. By
\ref{326} 
we have $(E_1+\widehat{E}_1)\cdot \widehat{r}_2>0$ and  
$(E_2+\widehat{E}_2)\cdot \widehat{r}_2>0$, 
thus we set $F:=G_1$ and $\widehat{F}:=\widehat{G}_2$.
\end{parg2}
\begin{parg2}\label{327}
As soon as $F$ (respectively $\widehat{F}$)
intersects one of the divisors $E_i$, then
$F\cdot f_i>0$ and $E_i\cdot l>0$
(respectively $\widehat{F}\cdot f_i>0$ and $E_i\cdot \widehat{l}>0$),
and  similarly for $\widehat{E}_i$. In particular we have $F\cdot f>0$
and $\widehat{F}\cdot f>0$,  
where $f$ is a general fiber of $\psi$.

\medskip

 Suppose  for instance
 that $F\cap E_1\neq \emptyset$.
If $E_1\cdot l=0$, then $E_1$ contains some curve $l$, but this is
impossible because $(E_2+\widehat{E}_2)\cdot l>0$
while $E_1\cap (E_2\cup \widehat{E}_2)=\emptyset$; thus $E_1\cdot l>0$.

If $F\cdot f_1=0$, then $F$ contains an irreducible curve
$\overline{f}_1$ which is a fiber of the $\pr^1$-bundle structure on
$E_1$. 
Let $\pi\colon F\to G$ be the $\pr^1$-bundle structure on $F$, and
$\pi_*\colon\N(F)\to\N(G)$ the push-forward.  
Notice that $\pi(\overline{f}_1)$ is a curve, because $\overline{f}_1$
and $l$ are not numerically equivalent in $X$, and hence neither in
$F$.  

Consider the surface
 $S:=
\pi^{-1}(\pi(\overline{f}_1))$. Then
$\pi_*(\N(S,F))=\R\pi_*([\overline{f}_1]_F)$, hence
$\N(S,F)=\ker\pi_*\oplus
\R[\overline{f}_1]_F=\R[l]_F\oplus\R[\overline{f}_1]_F$, and $\N(S,X)=
\R[l]\oplus\R[{f}_1]$. 

Since $\widehat{E}_1\cdot
\overline{f}_1>0$, we have
$S\cap\widehat{E}_1\neq\emptyset$, and there exists an irreducible
curve $C\subset S\cap\widehat{E}_1$.  Thus $[C]\in\N(S,X)$, so that
$C\equiv\lambda l+\mu f_1$ with $\lambda,\mu\in\R$. On the other
hand $C\cap (E_2\cup \widehat{E}_2)=\emptyset$ (because
$C\subset\widehat{E}_1$) and 
$$0=(E_2+\widehat{E}_2)\cdot C=\lambda (E_2+\widehat{E}_2)\cdot 
l,$$
which by \ref{326b} yields 
$\lambda=0$, $\mu\neq 0$ and $[f_1]=(1/\mu)[C]\in\N(\widehat{E}_1,X)$,
a contradiction with \ref{details}.  

Therefore $F\cdot f_1>0$. We have $f\equiv f_1+\widehat{f}_1$ (see
\ref{conicbdl}$(4)$), and $F\cdot \widehat{f}_1\geq 0$ because
$F\neq\widehat{E}_1$ (see \ref{326b}), hence $F\cdot f>0$. 
\end{parg2}
\begin{parg2}\label{328}
For every $i\in\{0,\dotsc,k\}$ let $F_i,\widehat{F}_i\subset X_i$ be
the transforms of $F,\widehat{F}$. 
Let us show that for any 
$i\in\{0,\dotsc,k-1\}\smallsetminus\{i_1,i_2\}$,
the divisors $F_i$ and $\widehat{F}_i$ are disjoint from
$\Lo(Q_i)$.

\medskip

 By contradiction, suppose for instance that this is not true for $F$,
and let $j\in\{0,\dotsc,k-1\}\smallsetminus\{i_1,i_2\}$ be the
smallest index such that 
 $F_j$ intersects $\Lo(Q_j)$.
Recall from \ref{325} that $\sigma_i$ is a flip
for every $i\in\{0,\dotsc,k-1\}\smallsetminus\{i_1,i_2\}$; in
particular, $Q_j$ is a small extremal ray, and $\sigma_j$ is a flip. 

After \ref{conicbdl}(3),
$\sigma$ is regular on the divisors
$E_1,E_2,\widehat{E}_1,\widehat{E}_2$, 
and
$\Lo(Q_j)$ is disjoint from their images in $X_j$.

Recall from \ref{general}$(4)$ the definition of $A_i\subset X_i$, 
for $i\in\{1,\dotsc,k\}$: 
$A_1\subset X_1$ is the indeterminacy locus of $\sigma_0^{-1}$,
and for $i\geq 2$, if  $\sigma_{i-1}$ is a divisorial contraction
(respectively, if $\sigma_{i-1}$ is a flip), 
$A_i$ is the union of $\sigma_{i-1}(A_{i-1})$ (respectively, the
transform of $A_{i-1}$) and  
the indeterminacy locus of
$\sigma_{i-1}^{-1}$.

If $j>0$, by the minimality of $j$, $F_j$ does not intersect the loci of the
previous flips, hence it can intersect $A_j$ only along the images of
$E_1$ and 
$E_2$. Therefore 
\renewcommand{\theequation}{\thethmdue}\stepcounter{thmdue}
\begin{equation}\label{forno} 
\Lo(Q_j)\cap F_j\cap A_j=\emptyset.\end{equation} 

Let $\alpha_j\colon X_j\to Y_j$ be the contraction of $Q_j$. 
Suppose first that
$\alpha_j$ is finite on $F_j$. Then
$\Lo(Q_j)=\Exc(\alpha_j)\not\subset F_j$, and since $F_j\cap
\Lo(Q_j)\neq\emptyset$, we have 
 $F_j\cdot Q_j>0$. Hence
 every non
trivial fiber of $\alpha_j$ must have dimension $1$, otherwise
$\alpha_j$ would not be finite on $F_j$. 

If $j=0$, then $\alpha_0$ is a small contraction of a smooth Fano variety with
one-dimensional fibers, which is impossible, see  \cite[Theorem 4.1]{AWaview}.

Suppose that $j>0$.
If $C_0\subset X_j$ is an irreducible curve in a fiber of $\alpha_j$,
then $C_0$ 
must intersect $F_j$, hence $C_0\not\subseteq A_j$ by \eqref{forno}; 
in particular
$C_0\not\subseteq\Sing(X_j)$ (recall that $\Sing(X_j)\subseteq A_j$ by
\ref{general}$(4)$).   
 Then \cite[Lemma~1]{ishii} yields $-K_{X_j}\cdot C_0\leq 1$, and
\cite[Lemma~3.8]{31} implies that $C_0\cap A_j=\emptyset$.
We conclude that
$\Lo(Q_j)\subseteq X_j\smallsetminus A_j$, and this is again
impossible
by  \cite[Theorem 4.1]{AWaview},
because $-K_{X_j}\cdot Q_j>0$ and $(\alpha_j)_{|X_j\smallsetminus A_j}\colon
X_j\smallsetminus A_j\to Y_j\smallsetminus\alpha_j(A_j)$ is 
a small contraction of a smooth variety with
one-dimensional fibers.

\medskip

Suppose now that  $\alpha_j$ is not finite on $F_j$. Then
 there exists  an irreducible curve $C_1\subset F_j$ with
$[C_1]\in Q_j$; in particular $C_1$ is disjoint from the images of 
$E_1,
E_2,\widehat{E}_1,\widehat{E}_2$ in $X_j$.
 Consider the transform $\widetilde{C}_1\subset
F\subset X$ of $C_1$, so that $\widetilde{C}_1$ is disjoint from
$E_1,
E_2,\widehat{E}_1,\widehat{E}_2$.

Recall that $F$ intersects both $E_1\cup\widehat{E}_1$ and
$E_2\cup\widehat{E}_2$ by \ref{326}.
 We assume that $F$ intersects $E_1$ and $E_2$, the other cases being
 analogous. Then $E_1\cdot l>0$ by \ref{327}, 
so that using \ref{elem2}$(3)$ we get
$$\widetilde{C}_1\equiv \lambda l + \mu C_2,$$
where $C_2\subset F\cap E_1$ is a curve, $\lambda,\mu\in\R$, and
$\mu\geq 0$. In particular $C_2\cap E_2=\emptyset$, therefore 
$0=E_2\cdot \widetilde{C}_1= \lambda E_2\cdot l$. On the other hand 
 $E_2\cdot l>0$ by \ref{327}, and
this implies that 
$\lambda=0$ and $\widetilde{C}_1\equiv\mu C_2$. Recall that the
map $X\dasharrow X_j$ is regular on $F$ by the minimality of $j$, and
call $C_2'$ the image 
of $C_2$ in $X_j$.  
We deduce that $C_1\equiv\mu C_2'$ in $X_j$, so that $[C_2']\in
Q_j$. But $C_2'$ is contained in the image of $E_1$, which is disjoint
from  
$\Lo(Q_j)$, and 
 we have a contradiction.
\end{parg2}
\begin{parg2}\label{329}
We show that $k=2$ in \eqref{veronica}, so that $i_1=0$ and $i_2=1$.

\medskip

 By contradiction, suppose that $k>2$. Recall from \ref{325} 
that $\sigma_i$ is a flip
for every $i\in\{0,\dotsc,k-1\}\smallsetminus\{i_1,i_2\}$, so 
equivalently we are assuming that the Mori program \eqref{veronica}
 contains some flip.

We define an integer $m\in\{k-3,k-2,k-1\}$ and a morphism
 $\eta\colon X_{m+1}\to X_k$ as follows:
\begin{enumerate}[$\bullet$]
\item if $\sigma_{k-1}$ is a flip, set 
$m:=k-1$ and $\eta:=\text{Id}_{X_k}$;
\item if $\sigma_{k-1}$ is a contraction and $\sigma_{k-2}$ is a flip, set
$m:=k-2$ and $\eta:=\sigma_{k-1}\colon X_{k-1}\to X_k$;
\item if both $\sigma_{k-1}$ and $\sigma_{k-2}$ 
are contractions, set $m:=k-3$
and
$\eta:=\sigma_{k-2}\circ\sigma_{k-1}\colon X_{k-2}\to X_k$.
\end{enumerate}
It follows from these definitions that $Q_m$ is a small extremal ray of $X_m$ 
and $\sigma_m\colon
X_m\dasharrow X_{m+1}$ is a flip;
let $Q'_{m+1}$ 
be the corresponding small extremal ray of $X_{m+1}$. Set moreover
$\widetilde{\ph}:=\ph\circ\eta\colon
 X_{m+1}\to Y$.
$$\xymatrix{
X\ar@/^1pc/@{-->}[rrr]^{\sigma}\ar@{-->}[r]\ar@{-->}[drrr]_{\psi}&
{X_m}\ar@{-->}[r]_{\sigma_m} &
{X_{m+1}}\ar[r]_{\eta}\ar[dr]_(.3){\widetilde{\ph}} &
{X_k}\ar[d]^{\ph}\\
&&&Y
}$$

We keep the same notations as in  the proof of
 Lemma~\ref{conicbdl}; in particular we set $T_i:=\sigma(E_i)\subset
 X_k$ for $i=1,2$. Notice that when $m=k-2$ (respectively, $m=k-3$), 
$\eta$ is just the smooth blow-up of $T_2\subset X_k$ (respectively, 
of $T_1\cup T_2\subset X_k$).

Recall that for $j=1,2$ we have $Q_{i_j}\not\subset\N(D_{i_j},X_{i_j})$, 
in particular $\sigma_{i_j}$ is finite on $D_{i_j}$; this implies that $\eta$ 
is finite on $D_{m+1}$.

We show that every fiber of $\widetilde{\ph}$ has dimension
$1$. Indeed this is true for $\ph$  
by \ref{conicbdl}$(1)$. Moreover $\eta$ is an isomorphism 
over  $X_k\smallsetminus(T_1\cup T_2)$, therefore  $\widetilde{\ph}$
has one-dimensional fibers over $Y\smallsetminus \ph(T_1\cup T_2)$. 
On the other hand, we know by  \ref{conicbdl}(3) that there exist
open subsets $U\subseteq X$ and $V\subseteq Y$ such that $\ph(T_1\cup
T_2)\subset  
V$,
 both $\psi\colon U\to V$ and $\ph_{|\ph^{-1}(V)}\colon \ph^{-1}(V)\to
 V$ are conic bundles, and $\sigma_{|U}\colon U\to \ph^{-1}(V)$ is
 just the blow-up of $T_1$ and $T_2$. This
 implies that $\widetilde{\ph}_{|\widetilde{\ph}^{-1}(V)}\colon  
\widetilde{\ph}^{-1}(V)\to V$ is a conic bundle, in particular it has
one-dimensional fibers over $\ph(T_1\cup T_2)\subset V$. 

Recall from \ref{conicbdl}$(1)$ that $\ph$ is finite on $D_k=\eta(D_{m+1})$,
and since $\eta$ is finite on $D_{m+1}$, we deduce that
 $\widetilde{\ph}$ must be finite on $D_{m+1}$. Notice also that
$D_{m+1}\supset A_{m+1}\supseteq \Lo(Q_{m+1}')$ (see
\ref{general}$(4)$). 
As in the proof of Lemma~\ref{conicbdl},
using \cite[Lemma~3.8]{31} 
we see that every
fiber of $\widetilde{\ph}$ which intersects $\Lo(Q_{m+1}')$ is an
integral rational curve. 

Let $C\subset X_{m+1}$ be an irreducible curve with $[C]\in Q_{m+1}'$,
and set $S:=\widetilde{\ph}^{-1}(\widetilde{\ph}(C))$, so that $S$ is
an irreducible surface.

Since, by \ref{327}, $F$ and $\widehat{F}$ have positive
intersection with a general fiber of $\psi$ in $X$, 
 $F_{m+1}$ and $\widehat{F}_{m+1}$ have positive intersection with every
fiber of $\widetilde{\ph}$ in $X_{m+1}$. 
In particular,  $F_{m+1}$ and $\widehat{F}_{m+1}$ intersect $S$.

On the other hand by \ref{328} the divisors $F_m$ and $\widehat{F}_{m}$ in $X_m$
are disjoint from  $\Lo(Q_{m})$, therefore $F_{m+1}$ and $\widehat{F}_{m+1}$
are disjoint from  $\Lo(Q_{m+1}')$. We deduce
 that:  
\renewcommand{\theequation}{\thethmdue}\stepcounter{thmdue}
\begin{equation}\label{isterico}
F_{m+1}\cap C=\widehat{F}_{m+1}\cap C=\emptyset\ \text{ and
}\ \dim(F_{m+1}\cap S)=\dim(\widehat{F}_{m+1}\cap S)=1. 
\end{equation}

For $i=1,2$ call $G_i$ the image of $E_i$ in $X_{m+1}$, so that
$T_i=\eta(G_i)$ and $\ph(T_i)=\widetilde{\ph}(G_i)$. Notice that 
$A_k\smallsetminus(T_1\cup T_2)=\eta(A_{m+1}\smallsetminus(G_1\cup G_2))$.

Recall that the open subset $V\subseteq Y$ was defined in \eqref{V} as
$$V:=Y\smallsetminus \ph\left(A_k\smallsetminus(T_1\cup T_2)\right)=
Y\smallsetminus \widetilde{\ph}\left(A_{m+1}\smallsetminus(G_1\cup G_2)
\right).$$
By \ref{pluto}$(1)$ and \ref{conicbdl}$(2)$ we have $\Lo(Q_{m+1}')\cap
(G_1\cup G_2)=\emptyset$. In particular $C\subseteq
\Lo(Q_{m+1}')\subseteq  
A_{m+1}\smallsetminus(G_1\cup G_2)$, thus $$\widetilde{\ph}(C)\subseteq 
Y\smallsetminus V.$$

On the other hand we also have
$\widetilde{\ph}(G_1\cup G_2)=\ph(T_1\cup T_2)\subset V$,
therefore we deduce that
$\widetilde{\ph}(G_1\cup G_2)\cap\widetilde{\ph}(C)=\emptyset$ and hence
$$
(G_1\cup G_2)\cap S=\emptyset.
$$

Finally  by \ref{326b} we have $F\cap\widehat{F}=\emptyset$ in $X$,
and by \ref{328} the divisors $F$ and $\widehat{F}$ are disjoint from
the locus of every flip in the Mori program \eqref{veronica}. This
implies that 
 $F_{m+1}\cap\widehat{F}_{m+1}\subseteq G_1\cup G_2$, therefore:
$$F_{m+1}\cap\widehat{F}_{m+1}\cap 
S=\emptyset.$$ 
Together with \eqref{isterico}, this yields that $C$, $F_{m+1}\cap S$,
and $\widehat{F}_{m+1}\cap S$ are pairwise 
disjoint
curves in $S$.

Let $C'$ be an irreducible component of $\widehat{F}_{m+1}\cap
S$. Since $\widetilde{\ph}_{|S}\colon S\to\widetilde{\ph}(C)$ 
is a fibration in integral rational curves, we have
$C'\equiv \lambda C+\mu f$ where $\lambda,\mu\in\R$ and $f\subset S$
is a fiber. Then
$0=F_{m+1}\cdot C'= \mu F_{m+1}\cdot f$ while $F_{m+1}\cdot f>0$, hence $\mu=0$ and
$[C']\in Q'_{m+1}$. Therefore $C'\subseteq\Lo(Q'_{m+1})\cap F_{m+1}$,
a contradiction because $\Lo(Q'_{m+1})\cap F_{m+1}=\emptyset$. 
\end{parg2}
\begin{parg2}\label{330}
Since $k=2$, $X_2$ is smooth and $\sigma\colon X\to X_2$ is just the
blow-up of two disjoint smooth subvarieties $T_1,T_2\subset X_2$, of
codimension $2$. In fact, we have $A_2=T_1\cup T_2$ (see
\ref{general}$(4)$), and by \eqref{V}
the description in \ref{conicbdl}$(3)$ and
\ref{conicbdl}$(4)$ 
holds 
  with $V=Y$ and $U=X$. In particular, $Y$ is smooth, 
$\ph\colon X_2\to Y$ and $\psi\colon X\to Y$ are conic bundles, 
  $\rho_X-\rho_Y=3$, and  the divisors
$Z_1=\psi(E_1)$ and $Z_2=\psi(E_2)$ are disjoint in $Y$. 
Moreover we have
$\psi(F)=Y$  by \ref{327}.

The situation is very similar to the case where $\ph$ is finite on
$(E_0)_s$ in \ref{second}, 
with the difference that the
$E_i$'s do not need to be products. In the same way we use
Lemma~\ref{Vequivalence} to show that $[\psi(l)]\in\NE(Y)$ belongs to an
extremal ray of $Y$, whose contraction is a smooth conic bundle
$\zeta\colon Y\to Y'$, finite on $Z_1$ and $Z_2$; in particular 
$Y'$ is smooth of dimension $n-2$.
The contraction $\psi':=\zeta\circ\psi\colon X\to Y'$ is
equidimensional and hence flat, and $\rho_X-\rho_{Y'}=4$.
Moreover the general fiber of $\psi'$ is a Del Pezzo surface $S$
containing curves $f_1,\widehat{f}_1,f_2,\widehat{f}_2,l$, hence
$\N(S,X)=\ker(\psi')_*$ and $\psi'$ is quasi-elementary.
$$\xymatrix{
 X\ar[d]_{\psi'}\ar[dr]^{\psi}\ar[r]^{\sigma} & {X_{2}}\ar[d]^{\ph} \\
{Y'} & Y\ar[l]^{\zeta}
}$$
\end{parg2}
\begin{parg2}\label{331}
We show that the conic bundle $\ph\colon X_2\to Y$ is smooth.

\medskip

 By contradiction, suppose that
this is not the case, and let $\Delta_{\ph}\subset Y$ be 
 the
discriminant divisor of $\ph$. Recall that this is an effective,
reduced divisor in $Y$ such that $\ph^{-1}(y)$ is singular if and only
if $y\in\Delta_{\ph}$. 

Consider also the discriminant
divisor $\Delta_{\psi}\subset Y$ of the 
conic bundle $\psi\colon X\to Y$. Since $\ph$ is smooth over $Z_1$ and $Z_2$,
the divisors $\Delta_{\ph},Z_1,Z_2$ are pairwise disjoint, and
$\Delta_{\psi}=\Delta_{\ph}\cup Z_1\cup Z_2$. 

The fibers of $\psi$ over $Z_1\cup Z_2$ are singular but reduced,
hence $\psi^{-1}(y)$ is non-reduced if and only if $\ph^{-1}(y)$ is.
Let $W\subset\Delta_{\ph}$
be the set of points $y$ such that $\psi^{-1}(y)$ (equivalently,
$\ph^{-1}(y)$)
is non-reduced. Then
$W$ is a closed subset of $Y$, and
$W\subseteq\Sing(\Delta_{\ph})$ (see for instance
\cite[Proposition 1.8(5.c)]{sarkisov}). Moreover by
\cite[Proposition 4.3]{wisn} we know that $-K_Y\cdot C>0$ for every 
 irreducible curve
 $C\subset Y$ not contained in $W$.

 For $i=1,2$ we have $\codim\N(Z_i,Y)\leq 1$, because $\zeta(Z_i)=Y'$
 and hence $\zeta_*(\N(Z_i,Y))=\N(Y')$.
This
 yields
$Z_1^{\perp}=Z_2^{\perp}=\Delta_{\ph}^{\perp}=
\N(Z_1,Y)=\N(Z_2,Y)$ (see Remark \ref{elem}).
The three divisors $\Delta_{\ph},Z_1,Z_2$
are numerically proportional, nef, and cut a facet
of $\NE(Y)$, whose contraction
$\beta\colon
Y\to\pr^1$
sends $\Delta_{\ph},Z_1,Z_2$ to points (see \cite[Lemma~2.6]{fanos}). 
Even if a priori we do not know whether
every curve contracted by $\beta$ has positive anticanonical degree,
the general fiber of $\beta$ does not meet $W$, therefore it
is a Fano manifold. Moreover
$\NE(\beta)$ is generated by finitely many classes of rational
curves (see \cite[Lemma~2.6]{fanos}). 
Thus the same proof as \cite[Lemma~4.9]{31} yields
 that $Y\cong\pr^1\times Y'$, and  $\Delta_{\ph}=\{pts\}\times Y'$. 

In particular $\Delta_{\ph}$ is smooth, hence $W=\emptyset$ and $Y$
is Fano. Because $Y\cong\pr^1\times Y'$,
$Y'$ is Fano too, so that 
each connected component of $\Delta_{\ph}$ is
simply connected. However this is impossible, because by a standard
construction the conic bundle $\ph$ defines a double cover of every
irreducible component of  $\Delta_{\ph}$, obtained by considering the
components of the fibers in the appropriate Hilbert scheme of lines,
see \cite[\S1.5]{beauville}
and \cite[\S1.17]{sarkisov}. Since $\ph$ is an elementary contraction,
this double cover is non-trivial; on the other hand
it is also \'etale, because every fiber of $\ph$ is reduced, and we
have a contradiction.
\end{parg2}
\begin{parg2}
Since $\ph\colon X_2\to Y$ is smooth,
 every fiber of the conic bundle $\psi\colon X\to Y$ is
reduced. Then
 \cite[Proposition 4.3]{wisn} shows
that $Y$ and $Y'$ are Fano.
Finally $c_{Y'}\leq 3$ by the following Remark, which concludes the
proof of Proposition \ref{secondameta}.
\end{parg2}
\vspace{-17pt}
\end{proof}
\begin{remark2}\label{codimY}
Let $X$ be a Fano manifold, $\ph\colon X\to Y$ a
surjective morphism, and $D\subset X$ a prime divisor. We have
$\N(\ph(D),Y)=\ph_*(\N(D,X))$, hence 
 $\codim\N(D,X)\geq\codim\N(\ph(D),Y)$.
In particular, if $Y$ is a Fano manifold, then $c_Y\leq c_X$.
\end{remark2}
\section{Applications}\label{quarta}
In this final section we prove the results
stated in the introduction.
\begin{proof}[Proof of Theorem \ref{product}]
We have $c_X\geq\codim\N(D,X)\geq 3$. If $c_X=3$, Theorem \ref{gen} 
yields $(ii)$.
If instead $c_X\geq 4$,
applying
iteratively 
Theorem \ref{gen}, we can write $X=S_1\times\cdots\times S_r\times Z$,
where $S_i$ are Del Pezzo surfaces, $r\geq 1$, and $Z$ is a Fano
manifold with $c_Z\leq 3$. 

If $D$ dominates $Z$ under the projection, up to reordering
$S_1,\dotsc,S_r$ we can assume that $D$ dominates
$S_2\times\cdots\times S_r\times Z$. Then
$\codim\N(D,X)\leq\rho_{S_1}-1$ (see Example \ref{max}), and
we get $(i)$.

Suppose instead that $D=S_1\times\cdots\times S_r\times D_Z$, where
$D_Z\subset Z$ is a prime divisor. Then
$$3\geq c_Z\geq\codim\N(D_Z,Z)=\codim\N(D,X)\geq 3,$$
and the inequalities above are equalities. Therefore again by
Theorem \ref{gen} we have a flat, 
quasi-elementary contraction $Z\to W$, where $W$ is a
Fano manifold with $\dim W=\dim Z-2$, and $\rho_Z-\rho_W=4$. 
Then the induced contraction
$X\to S_1\times\cdots\times S_r\times W$ satisfies $(ii)$.
\end{proof}
\begin{proof}[Proof of Corollary \ref{lorenzo}]
We have $c_X\geq\codim\N(D,X)\geq 3$.
Suppose that $X$ is not a product of a Del Pezzo surface with another
variety. Then 
Theorem \ref{gen} shows that
 $c_X= 3$ and   there is a quasi-elementary
contraction $X\to T$ where $T$ is a Fano manifold, $\dim
T=n-2$, and $\rho_X-\rho_T= 4$.
If $n=4$, \cite[Theorem 1.1]{fanos} implies that $\rho_T\leq 2$, hence
$\rho_X\leq 6$.
The case $n=5$ follows similarly.
\end{proof}
\begin{lemma}\label{corollZ}
Let $X$ be a Fano manifold, $D\subset X$ a
prime divisor, and $\ph\colon X\to Y$ a
contraction.
Then 
$\codim\N(\ph(D),Y)\leq 8$. 

Suppose moreover that
 $\codim\N(\ph(D),Y)\geq 4$. Then
  $X\cong
S\times T$ and $Y\cong W\times Z$, where $S$ is a Del Pezzo
surface, $W$ is a blow-down of $S$, and $\codim\ph(D)\leq 2$. 
More precisely, one of the following holds:
\begin{enumerate}[$(i)$]
\item $\ph(D)$ is a divisor in $Y$, and dominates $Z$ under the
 projection;
\item $\ph(D)=\{p\}\times Z$ and $D=C\times T$, where
 $C\subset S$ is a curve contracted to $p\in W$.
\end{enumerate}
\end{lemma}
\begin{proof}
We have $\codim\N(\ph(D),Y)\leq \codim\N(D,X)\leq 8$
by Remark \ref{codimY}
and Theorem \ref{product}. 
Suppose that
 $\codim\N(\ph(D),Y)\geq 4$. Then, again by  Theorem \ref{product},
 $X\cong S\times T$ where $S$ is a Del
Pezzo surface,  and $D$ dominates $T$ under the projection. Therefore $Y\cong
W\times Z$, $\ph$ is induced by two contractions $S\to W$ and $f\colon
T\to
Z$, and $\ph(D)$ dominates $Z$ under the projection. 

In particular
$\dim W\leq 2$
and $\dim\N(\ph(D),Y)\geq \rho_Z$, hence $\rho_W\geq
\codim\N(\ph(D),Y)\geq 4$. 
 This implies that $\dim W=2$, thus
$W$ is a blow-down
of $S$, and $\ph(D)$ has codimension $1$ or $2$ in $Y$.

If $\ph(D)$ is a divisor, we have $(i)$.
Suppose that $\codim\ph(D)=2$, and consider the factorization of $\ph$
as $S\times T\stackrel{\psi}{\to}W\times T\stackrel{\xi}{\to}
W\times Z$. Then $\xi=(\text{Id}_W,f)$
induces an isomorphism $W\times\{t\}\to W\times\{f(t)\}$ for every
$t\in T$. 
If $t$
is general, we have $\dim \ph(D)\cap
(W\times\{f(t)\})=0$ and
$\psi(D)\cap (W\times\{t\})\cong \ph(D)\cap
(W\times\{f(t)\})$. This implies that $\psi(D)$ has codimension $2$ in
$W\times T$, hence $D$ is an exceptional divisor of $\psi$, which
gives the statement.
\end{proof}
Corollary \ref{p1} follows from
Theorem \ref{product}, while
Corollary \ref{divtocurve} is a straightforward application of Lemma
\ref{corollZ} (just take the Stein factorization of $\ph$).
\begin{proof}[Proof of Corollary \ref{3folds}]
By taking the Stein factorization, we can assume that $\ph$ is a contraction; we also assume that $\rho_Y\geq 6$.
By \cite[Lemma~2.6]{fanos} we know that $Y$ has some elementary contraction $\psi\colon Y\to Z$, and $\dim Z\geq 2$ because $\rho_Z\geq 5$.

We define a prime divisor $D\subset X$ depending on $\psi$, as follows.
If $\dim Z=2$, let $D\subset X$ be any prime divisor such that $\dim\psi(\ph(D))=1$.
If $\psi$ is birational and divisorial, let $D\subset X$ be a prime divisor such that $\ph(D)\subseteq\Exc(\psi)$.
If $\psi$ is birational and small, its lifting in
$X$ (see \cite[\S~2.5]{fanos}) must
be an elementary contraction of type $(n-1,n-2)^{sm}$; let $D$ be its exceptional divisor: then again we have  $\ph(D)\subseteq\Exc(\psi)$.

In any case $\dim\N(\ph(D),Y)\leq 2$, so that Lemma \ref{corollZ} 
implies the statement.
\end{proof}
\footnotesize
\providecommand{\bysame}{\leavevmode\hbox to3em{\hrulefill}\thinspace}
\providecommand{\MR}{\relax\ifhmode\unskip\space\fi MR }
% \MRhref is called by the amsart/book/proc definition of \MR.
\providecommand{\MRhref}[2]{%
  \href{http://www.ams.org/mathscinet-getitem?mr=#1}{#2}
}
\providecommand{\href}[2]{#2}

\bigskip

\noindent C.\ Casagrande\\
Dipartimento di Matematica, Universit\`a di Pavia \\
via Ferrata 1 \\
 27100 Pavia - Italy 

\smallskip

\noindent \emph{Current address:}\\
Dipartimento di Matematica, Universit\`a di Torino \\
via Carlo Alberto 10 \\
10123 Torino - Italy\\
cinzia.casagrande@unito.it

\begin{thebibliography}{BCHM10}\addcontentsline{toc}{section}{References}

\bibitem[ABW92]{ABW}
M.~Andreatta, E.~Ballico, and J.~A. Wi{\'s}niewski, \emph{Vector bundles and
  adjunction}, Int.~J.~Math.\ \textbf{3} (1992), 331--340.

\bibitem[And85]{ando}
T.~Ando, \emph{On extremal rays of the higher dimensional varieties}, Invent.\
  Math.\ \textbf{81} (1985), 347--357.

\bibitem[AW97]{AWaview}
M.~Andreatta and J.~A. Wi{\'s}niewski, \emph{A view on contractions of higher
  dimensional varieties}, Algebraic Geometry - Santa Cruz 1995,
  Proc.~Symp.~Pure Math., vol.~62, 1997, pp.~153--183.

\bibitem[BCD07]{unsplit}
L.~Bonavero, C.~Casagrande, and S.~Druel, \emph{On covering and quasi-unsplit
  families of curves}, J.\ Eur.\ Math.\ Soc.\ (JEMS) \textbf{9} (2007), 45--57.

\bibitem[BCDD03]{mukai}
L.~Bonavero, C.~Casagrande, O.~Debarre, and S.~Druel, \emph{Sur une conjecture
  de {M}ukai}, Comment.\ Math.\ Helv.\ \textbf{78} (2003), 601--626.

\bibitem[BCHM10]{BCHM}
C.~Birkar, P.~Cascini, C.~D. Hacon, and
  J.~M{\parbox[b][\Mheight][t]{\cwidth}{c}}Kernan, \emph{Existence of minimal
  models for varieties of log general type}, J.\ Amer.\ Math.\ Soc.\
  \textbf{23} (2010), 405--468.

\bibitem[BCW02]{bonwisncamp}
L.~Bonavero, F.~Campana, and J.~A. Wi{\'s}niewski, \emph{Vari{\'e}t{\'e}s
  projectives complexes dont l'{\'e}clat{\'e}e en un point est de {F}ano},
  C.~R., Math., Acad.~Sci.~Paris \textbf{334} (2002), 463--468.

\bibitem[Bea77]{beauville}
A.~Beauville, \emph{{P}rym varieties and the {S}chottky problem}, Invent.\
  Math.\ \textbf{41} (1977), 149--196.

\bibitem[Cas03]{fano}
C.~Casagrande, \emph{Toric {F}ano varieties and birational morphisms}, Int.\
  Math.\ Res.\ Not.\ \textbf{27} (2003), 1473--1505.

\bibitem[Cas08]{fanos}
\bysame, \emph{Quasi-elementary contractions of {F}ano manifolds}, Compos.\
  Math.\ \textbf{144} (2008), 1429--1460.

\bibitem[Cas09]{31}
\bysame, \emph{On {F}ano manifolds with a birational contraction sending a
  divisor to a curve}, Michigan Math.\ J.\ \textbf{58} (2009), 783--805.

\bibitem[Deb01]{debarreUT}
O.~Debarre, \emph{Higher-dimensional algebraic geometry}, Universitext,
  Springer-Verlag, 2001.

\bibitem[HK00]{hukeel}
Y.~Hu and S.~Keel, \emph{Mori dream spaces and {GIT}}, Michigan Math.\ J.\
  \textbf{48} (2000), 331--348.

\bibitem[Ish91]{ishii}
S.~Ishii, \emph{Quasi-{G}orenstein {F}ano 3-folds with isolated non-rational
  loci}, Compos.\ Math.\ \textbf{77} (1991), 335--341.

\bibitem[KM98]{kollarmori}
J.~Koll{\'a}r and S.~Mori, \emph{Birational geometry of algebraic varieties},
  Cambridge Tracts in Mathematics, vol. 134, Cambridge University Press, 1998.

\bibitem[Kol96]{kollar}
J.~Koll{\'a}r, \emph{Rational curves on algebraic varieties}, Ergebnisse der
  Mathematik und ihrer Grenzgebiete, vol.~32, Springer-Verlag, 1996.

\bibitem[Laz04]{lazI}
R.~Lazarsfeld, \emph{Positivity in algebraic geometry {I}}, Springer-Verlag,
  2004.

\bibitem[Occ06]{occhetta}
G.~Occhetta, \emph{A characterization of products of projective spaces},
  Canad.\ Math.\ Bull.\ \textbf{49} (2006), 270--280.

\bibitem[Sar82]{sarkisov}
V.~G. Sarkisov, \emph{On conic bundle structures}, Izv.\ Akad.\ Nauk SSSR Ser.\
  Mat.\ \textbf{46} (1982), 371--408, Russian. English translation: Math.\
  USSR-Izv.\ {\bf 20} (1982), 355--390.

\bibitem[Tsu06]{toru}
T.~Tsukioka, \emph{Classification of {F}ano manifolds containing a negative
  divisor isomorphic to projective space}, Geom.~Dedicata \textbf{123} (2006),
  179--186.

\bibitem[Wi{\'s}91]{wisn}
J.~A. Wi{\'s}niewski, \emph{On contractions of extremal rays of {F}ano
  manifolds}, J.~Reine Angew.~Math.\ \textbf{417} (1991), 141--157.

\end{thebibliography}
\end{document}